\definecolor{vert}{RGB}{0,205,0}
\newtheorem{theorem}{Theorem}[section]
\newtheorem{definitio}[theorem]{Definition}
\newenvironment{definition}{\begin{definitio} \rm }{\end{definitio}}
\newtheorem{rem}[theorem]{Remark}
\newenvironment{remark}{\begin{rem} \rm }{\end{rem}}
\newtheorem{ex}[theorem]{Example}
\newenvironment{example}{\begin{ex} \rm }{\end{ex}}
\newtheorem{lemma}[theorem]{Lemma}
\newtheorem{proposition}[theorem]{Proposition}
\newtheorem{corollary}[theorem]{Corollary}
\newtheorem{question}[theorem]{Question}
\newtheorem*{theotd}{Theorem 3.2}
\newtheorem*{theost}{Theorem 7.3}
\newcommand{\Z}{\mathbb{Z}}
\newcommand{\R}{\mathbb{R}}
\newcommand{\RP}{\mathbb{R}\mathrm{P}}
\newcommand{\CP}{\mathbb{C}\mathrm{P}}
\newcommand{\MM}{\mathcal M}
\newcommand{\Int}{\mathrm{Int}}
\newcommand{\del}{\partial}
\newcommand{\id}{\mathrm{id}}
\newcommand{\cg}{[\![}
\newcommand{\cd}{]\!]}
\newcommand{\drawwr}[1]{\draw[white,line width=5pt,rounded corners=6pt] #1 \draw[red,rounded corners=6pt] #1}
\newcommand{\drawwb}[1]{\draw[white,line width=5pt,rounded corners=8pt] #1 \draw[blue,rounded corners=8pt] #1}
\newcommand{\drawwp}[1]{\draw[white,line width=5pt,rounded corners=8pt] #1 \draw[rounded corners=8pt,orange] #1}
\newcommand{\drawwo}[1]{\draw[white,line width=5pt,rounded corners=8pt] #1 \draw[rounded corners=8pt,purple] #1}
\newcommand{\draww}[1]{\draw[white,line width=5pt,rounded corners=8pt] #1 \draw[rounded corners=8pt] #1}
\newcommand{\buco}{(-1,0.1) ..controls +(0.5,-0.4) and +(-0.5,-0.4) .. (1,0.1) (-0.8,0) ..controls +(0.6,0.3) and +(-0.6,0.3) .. (0.8,0)}
\newcommand{\trou}{(-1,0) ..controls +(0.5,-0.4) and +(-0.5,-0.4) .. (1,0) (-0.8,-0.1) ..controls +(0.6,0.3) and +(-0.6,0.3) .. (0.8,-0.1)}
\title{Multisections of higher-dimensional manifolds}
\date{}
\author{Fathi Ben Aribi}
\author{Sylvain Courte}
\author{Marco Golla}
\author{Delphine Moussard}
\begin{document}

\begin{abstract}
 Generalizing Heegaard splittings of $3$--manifolds and Gay--Kirby trisections of $4$--manifolds, we consider multisections of higher-dimensional smooth (or PL) closed orientable manifolds, namely decompositions into $1$--handlebodies whose subcollections intersect along $1$--handlebodies, with global intersection a closed surface. With such a multisection one can associate a diagram. We prove that a multisection diagram determines a unique PL--manifold in all dimensions and a unique smooth manifold up to dimension $6$. Further, we show that any closed orientable smooth $5$--manifold admits a multisection.
\end{abstract}

\maketitle

\section{Introduction}

While Heegaard splittings of $3$--manifolds have been widely studied, their $4$--dimensional analogues, the so-called trisections, have been recently introduced by Gay and Kirby \cite{GayKirby} and constitute an active and prolific subject of investigation. Beyond Heegaard splittings and trisections, it is natural to ask if higher-dimensional manifolds also admit analogous decompositions. Rubinstein and Tillmann introduced in \cite{RuTi} a notion of multisection for PL--manifolds, which coincides with Heegaard splittings in dimension $3$ and trisections in dimension $4$. Nevertheless the analogy is not as strong as one could wish; in particular, these multisections have no diagrammatic representations.

We introduce here an alternative notion of multisection. We call a multisection of a smooth (or~PL) $(n+1)$--manifold a decomposition into $n$ $1$--handlebodies such that each subcollection intersects along a $1$--handlebody except the global intersection which is a closed surface --- the central surface of the multisection. A diagram of such a multisection is a family of cut-systems on the central surface defining the $3$--dimensional $1$--handlebodies of the decomposition. Building on results of Laudenbach--Poénaru \cite{LP}, Montesinos \cite{Mon} and Cavicchioli--Hegenbarth \cite{CH}, we prove that a multisection diagram determines a unique PL--manifold, and a unique smooth manifold up to dimension $6$. Starting in dimension $7$, there exist distinct smooth manifolds with diffeomorphic multisection diagrams, and from dimension $8$, there may exist multisection diagrams not realizable by smooth manifolds.

\begin{theotd}
 Let $n\geq2$. Every abstract $n$--section diagram is the diagram of some multisected PL $(n+1)$--manifold which is unique up to multisection-preserving PL--homeo\-mor\-phism. Further, if $n\leq 6$, every abstract $n$--section diagram is the diagram of some multisected smooth $(n+1)$--manifold, which, for $n\leq 5$, is unique up to multisection-preserving diffeomorphism.
\end{theotd}

Heegaard splittings and trisections exist for all smooth manifolds in dimension $3$ and $4$ respectively. Using Morse theory, we push this existence result one dimension higher.

\begin{theost}
 Every closed connected orientable smooth $5$--manifold admits a quadrisection.
\end{theost}

\begin{remark}
 It is known that all PL--manifolds of dimension~$5$ are smoothable, so the above theorem remains true for PL $5$--manifolds.
\end{remark}

\begin{question}
 Does every closed connected orientable smooth manifold of dimension greater than $5$ admit a multisection?
\end{question}

Heegaard splittings and trisections of a given manifold are known to be unique up to some stabilization moves. We introduce higher-dimensional stabilization moves, which can be defined as connected sums of a multisected manifold with a genus--$1$ multisection of the standard sphere. We also describe them as ambient cut-and-paste operations on the multisected manifold.

\begin{question}
 For a closed connected orientable smooth manifold $W$ of dimension greater or equal to $5$, do every two multisections of $W$ become isotopic after some number of stabilizations?
\end{question}

Since a multisection diagram determines a manifold up to homeomorphism, it encodes the topological invariants of the manifold. We explain how to compute the homology of a multisected manifold from a multisection diagram. More precisely, we show that the homology of the manifold is the homology of a complex defined using the first homology group of the central surface and its subgroups generated by the curves of the diagram.

\subsection*{Plan of the paper}
In Section~\ref{sec:multisections}, we give the preliminary definitions. Section~\ref{sec:diagrams} treats the question of diagrams and contains the proof of Theorem~\ref{thm:diagramVSmultisection}. In Section~\ref{sec:examples}, we give some examples; we exhibit in particular all genus--$0$ and genus--$1$ PL--multisections. Section~\ref{sec:stabilizations} is devoted to stabilization moves
on multisections. In Section~\ref{sec:4Dquadri}, we make explicit a relation between Islambouli--Naylor $4$--dimensional multisections and our $5$--dimensional quadrisections, and apply it to the study of $4$--dimensional quadrisections up to stabilization moves. Section~\ref{sec:5Dexistence} gives the existence proof of multisections in dimension~$5$ (Theorem~\ref{thm:existquadri}) and Section~\ref{sec:moreExs} illustrates the proof with some more examples. Finally, in Section~\ref{sec:homology}, we explain how to compute the homology of a manifold from a multisection diagram (Theorem~\ref{th:homology}).

\subsection*{Acknowledgements}
We would like to thank Benjamin Audoux, Rudy Dissler, David Gay and Jean-Paul Mohsen for helpful discussions.
This work was supported by the ANR project SyTriQ ANR-20-CE40-0004-01.
The first author was supported by the FNRS in his ``Research Fellow'' position at UCLouvain, under Grant no. 1B03320F.

\section{Multisections}
\label{sec:multisections}

We call a \emph{$1$--handlebody} (or simply {\em handlebody}) a compact connected orientable smooth manifold that can be constructed using only $0$-- and $1$--handles, {\em ie} which admits a Morse function (constant on the boundary and attaining its maximum there) with only critical points of index $0$ and $1$. Equivalently, this is a manifold which is diffeomorphic to a boundary connected sum of $k$ copies of $S^1\times D^n$ for some $k\geq0$, $n>1$. The number $k$ is called the \emph{genus} of the $1$--handlebody.

\begin{definition}\label{def:multisection}
For $n\geq2$, an \textit{$n$--section of genus $g$} of a closed smooth $(n+1)$--manifold $W$ is a decomposition
$W=\cup_{i=1}^n W_i$ satisfying the following conditions, where, for $I$ a non-empty subset of $\{1,\dots,n\}$, we set $W_I=\cap_{i\in I}W_i$:
\begin{itemize}
 \item $W_I$ is a submanifold with corners whose codimension--$k$ stratum is the union of $\mathrm{Int}(W_J)$ for $J$ containing $I$ such that $|J|=|I|+k$,
 \item $W_{\{1,\dots,n\}}$ is diffeomorphic to a closed orientable genus--$g$ surface,
 \item for $I\neq \{1,\dots,n\}$, after smoothing the corners, $W_I$ is diffeomorphic to a $1$--handlebody of dimension $n+2-|I|$.
\end{itemize}
The {\em $k$--spine} of the $n$--section is the union $\cup_{|I|=n-k+2}W_I$ of its $k$--dimensional pieces.
An $n$--section of $W$ will typically be denoted by a single letter $\mathcal M$.
We call \emph{multisection} an $n$--section for some~$n$.
The \emph{multisection genus} of a closed manifold $W$ is the minimal $g$ among all multisections of~$W$.
\end{definition}

\begin{figure} [htb]
\begin{center}
\begin{tikzpicture} [scale=1.6]
\draw (0,0) circle (2);

\draw[color=black!40] (-2,0) .. controls +(1,-1) and +(.4,.08) .. (.9,-.63);
\draw[color=black!40, dashed] (-2,0) .. controls +(1,1) and +(.4,-.08) .. (.75,.65);

\fill[color=cyan!80, fill opacity=0.4] (0,0) -- (-1,-1.73) .. controls +(1,0.4) and +(-0.2,-1) .. (0.8,0.65); -- cycle;
\draw[color=cyan, dashed, thick] (-1,-1.73) .. controls +(1,0.4) and +(-0.2,-1) .. (0.8,0.65);
\draw (.1,-.4) node  {\color{cyan} $W_{24}$};

\fill[color=pink!40, fill opacity=0.7] (0,0) -- (-1,1.73) .. controls +(1,0.2) and +(-0.2,0.7) .. (0.8,0.65) -- cycle;
\draw[color=pink,dashed,thick] (-1,1.73) .. controls +(1,0.2) and +(-0.2,0.7) .. (0.8,0.65);
\draw (.2,1.3) node  {\color{pink!250} $W_{23}$};

\fill[color=violet!15, fill opacity=0.7] (0,0) -- (-1,1.73) .. controls +(-.5,-.2) and +(-.5,0.7) .. (-1,-1.73) -- cycle;
\draw[color=violet] (-1,1.73) .. controls +(-.5,-.2) and +(-.5,0.7) .. (-1,-1.73);
\draw (-1,0) node  {\color{violet} $W_{12}$};

\fill[color=orange!20, fill opacity=0.7] (0,0) -- (-1,-1.73) .. controls +(1,-0) and +(-0.2,-0.7) .. (0.9,-0.63) -- cycle;
\draw[color=orange] (-1,-1.73) .. controls +(1,-0) and +(-0.2,-0.7) .. (0.9,-0.63);
\draw (.2,-1.2) node  {\color{orange} $W_{14}$};

\fill[color=brown!40, fill opacity=0.8] (0,0) -- (-1,1.73) .. controls +(1,-0.4) and +(-0.2,1) .. (0.9,-0.63) -- cycle;
\draw[color=brown] (-1,1.73) .. controls +(1,-0.4) and +(-0.2,1) .. (0.9,-0.63);
\draw (0,.7) node  {\color{brown} $W_{13}$};

\fill[color=green!20, fill opacity=0.4] (0,0) -- (.9,-.63) .. controls +(.4,.08) and +(-.1,-.1) .. (2,0)
.. controls +(-.1,.2) and +(.4,-.08) .. (.8,.65) -- cycle;
\draw[color=green] (.9,-.63) .. controls +(.4,.08) and +(-.1,-.1) .. (2,0);
\draw[color=green, dashed] (2,0) .. controls +(-.1,.2) and +(.4,-.08) .. (.8,.65);
\draw (1.4,0) node  {\color{green} $W_{34}$};

\draw (0,0) node{\Huge $\cdot$};
\draw (0,0) node [left] {$\Sigma$};
\draw[color=red,thick] (0,0) -- (-1,1.73);
\draw (-1,1.73) node [above left] {\color{red} $W_{123}$};
\draw[color=blue,thick] (-1,-1.73) -- (0,0);
\draw (-1,-1.73) node [below left] {\color{blue} $W_{124}$};
\draw[color=vert,thick]  (0.9,-0.63) -- (0,0);
\draw (.9,-.63) node [below right] {\color{vert} $W_{134}$};
\draw [color=gray!70,very thick] (0,0) -- (0.8,0.65);
\draw (.8,.65) node [above right] {\color{gray} $W_{234}$};
\draw (-.7,.5) node  {\Large $W_{1}$};
\draw (-1.6,0) node  {\Large $W_{2}$};
\draw (1.2,1.35) node  {\Large $W_{3}$};
\draw (1.1,-1.35) node  {\Large $W_{4}$};
\end{tikzpicture}
\end{center} \caption{A schematic $4$--section of a smooth $5$--manifold\\{\footnotesize The four rays represent the 3--dimensional pieces $W_{ijk}$ intersecting along the central surface $\Sigma=W_{1234}$. They define six 2--dimensional sectors, corresponding to the 4--dimensional pieces $W_{ij}$, that divide the 5--dimensional whole into four pieces $W_i$.}}
\label{fig:quadrisection}
\end{figure}

A $2$--section of a $3$--manifold is a \emph{Heegaard splitting} and a $3$--section of a $4$--manifold is a \emph{trisection} in the sense of Gay--Kirby \cite{GayKirby}. A schematic of a $4$--section (or {\em quadrisection}) is represented in Figure~\ref{fig:quadrisection}.

The definition enjoys the following inductive property: for each $I$, $\partial W_I$ is a smooth manifold after smoothing corners and
inherits an $(n-|I|)$--section given by $\del W_I=\cup_{j\notin I} (W_I\cap W_j)$.

In what follows, to simplify the notation, we shall write $W_{i_1\dots i_k}$ for $W_{\{i_1,\dots,i_k\}}$.



\begin{remark}
 Definition~\ref{def:multisection} extends to the PL setting by replacing smooth submanifolds by PL--submanifolds and diffeomorphism by PL--homeomorphism everywhere. In this case, the $W_I$ are PL--submanifolds of $W$ which are PL--homeomorphic to PL--handlebodies.
\end{remark}

The requirement that the $W_I$ are submanifolds with corners and the condition on the strata impose a local model at the intersection of the different pieces of the decomposition. To make it explicit, we first define standard decompositions of simplices.

\begin{definition}\label{def:simplex}
For $k>0$, let $\Delta^{k-1}$ be a simplex of dimension $k-1$ with vertices $p_1,\dots,p_k$. Let $0$ be the barycenter of the simplex. For $1\leq i\leq k$, we define $\Delta^{k-1}_i$ to be the convex hull of $(0,p_1,\dots,\widehat{p_i},\dots,p_k)$, where $\widehat{p_i}$ means we omit the term $p_i$.
We refer to the decomposition $\Delta^{k-1}=\cup_i \Delta_i^{k-1}$ as the \emph{standard decomposition} of $\Delta^{k-1}$ (see Figure \ref{fig:simplex}).
\end{definition}

\begin{figure} [htb]
	\begin{center}
		\begin{tikzpicture}

\begin{scope}[xshift=0cm,yshift=0cm,scale=1.5]
\draw [color=blue, thick] (0,-1)--(0,0);
\draw [color=blue, thick] (0,1)--(0,0);

\draw (0,0) node{\huge $\cdot$};
\draw (0,1) node{\huge $\cdot$};
\draw (0,-1) node{\huge $\cdot$};

\draw (0,-1.7) node{$\Delta^1$};

\draw (0,0) [right] node{\small $0$};
\draw (0,1) [right]  node{$p_1$};
\draw (0,-1) [right]  node{$p_2$};

\draw (0,1/2) [left]  node{\color{blue}$\Delta_2^1$};
\draw (0,-1/2) [left]  node{\color{blue}$\Delta_1^1$};
\end{scope}

\begin{scope}[xshift=6cm,yshift=-0.3cm,scale=1.5]

\draw  (0,1)--(-1.73/2,-.5)--(1.73/2,-.5)--(0,1);

\draw  (0,0)--(0,1);
\draw (0,0)--(-1.73/2,-.5);
\draw  (0,0)--(1.73/2,-.5);

\draw (0,0) node{\huge $\cdot$};
\draw (0,1) node{\huge $\cdot$};
\draw (-1.73/2,-.5) node{\huge $\cdot$};
\draw (1.73/2,-.5) node{\huge $\cdot$};

\draw (0,-1.7) node{$\Delta^2$};

\draw (0.1,0.1)  node{\small $0$};
\draw (0,1) [above]  node{$p_1$};
\draw (-1.73/2,-.5) [left] node{$p_2$};
\draw (1.73/2,-.5) [right] node{$p_3$};

\draw (.65/2,.05)   node{\color{blue}$\Delta_2^2$};
\draw (-.65/2,.05)   node{\color{blue}$\Delta_3^2$};
\draw (0,-.3)   node{\color{blue}$\Delta_1^2$};
\end{scope}

\begin{scope}[xshift=12cm,yshift=0cm,scale=1.5]

\draw (0,0) node{\huge $\cdot$};
\draw (0,1) node{\huge $\cdot$};
\draw (1,-.2) node{\huge $\cdot$};
\draw (-1,-.2) node{\huge $\cdot$};
\draw (.5,-1) node{\huge $\cdot$};

\draw[color=gray] (0,0)--(0,1);
\draw[color=gray] (0,0)--(1,-.2);
\draw[color=gray] (0,0)--(-1,-.2);
\draw[color=gray] (0,0)--(.5,-1);

\draw (0,1)--(1,-.2)--(.5,-1)--(-1,-.2)--(0,1);
\draw[dashed] (1,-.2)--(-1,-.2);
\draw (0,1)--(.5,-1);

\draw (0,-1.7) node{$\Delta^3$};

\draw (0.1,-.4)   node{\color{blue!60}$\Delta_1^3$};
\draw (.5,0)   node{\color{blue}$\Delta_2^3$};
\draw (-.1,.4)   node{\color{blue!60}$\Delta_3^3$};
\draw (-.5,0)   node{\color{blue}$\Delta_4^3$};

\draw (0,0) [right]  node{\color{black!60}\small $0$};
\draw (0,1) [above]  node{$p_1$};
\draw (-1,-.2) [left] node{$p_2$};
\draw (.5,-1) [below]  node{$p_3$};
\draw (1,-.2) [right] node{$p_4$};

\end{scope}
\end{tikzpicture}
\end{center} \caption{Standard decompositions of simplices}
\label{fig:simplex}
\end{figure}
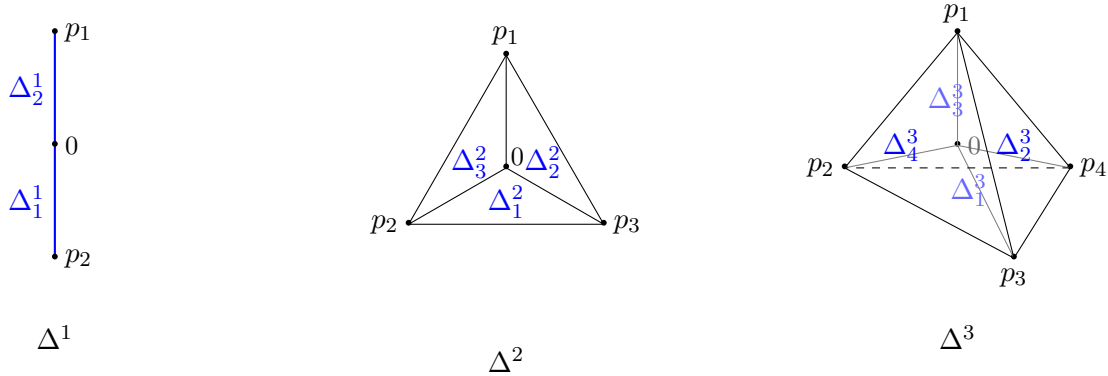


In the next lemma, we use the notation $\Delta^I$ to denote the simplex $\Delta^{|I|-1}$ where the set of indices $\{1,\dots,|I|\}$ is replaced by $I$.

\begin{lemma}
For each $I\neq \emptyset$ there exists a codimension $0$ embedding $\varphi:W_I\times \Delta^I \to W$ such that $\varphi(x,0)=x$ for all $x\in W_I$ and $\varphi^{-1}(W_J)=W_I\times \Delta^I_J$ for all $J \subset I$. In particular each $W_I$ has a trivial normal bundle in $W$.
\end{lemma}
\begin{proof}
Since $W_I\subset\del W_{I\setminus\{i\}}$ for all $i\in I$, we can find a vector field $v_i$ along $W_I$ which is tangent to $W_{I\setminus \{i\}}$
and inward pointing inside $W_{I\setminus\{i\}}$. Moreover we can assume that $\sum_{i\in I} v_i=0$ (for instance if $I=\{1,2,3\}$, we see that $v_3$ can be chosen in $\{av_1+bv_2, a<0, b<0\}$
because $W_1$, $W_2$ and $W_3$ all have a corner along $W_{123}$). The tubular neighborhood theorem then provides
an embedding $\varphi:W_I\times \Delta^I \to W$ such that $\varphi(x,0)=x$ and $d_{(x,0)}\varphi(0,p_i)=v_i(x)$ (where $p_i$ is the vertex indexed $i$ in
the standard simplex $\Delta^I$ where $\sum_{i\in I} p_i=0$). The decomposition of $W_I\times \Delta^I$ induced by $\varphi$
does not yet agree with the one induced by the standard decomposition of $\Delta^I$, but by construction their tangent planes agree along $W_I\times \{0\}$,
namely $d_{(x,0)}\varphi(T_xW_I\times T_0\Delta^I_J)=T_x W_J$ for all $J\subset I$. We view this as a family of
decompositions of $\Delta^I$ parametrized by $x\in W_I$ which all agree up to order $1$ at the origin. 
We claim that
these decompositions can be straightened by a family of (germs of) diffeomorphisms of $\Delta^I$ preserving $0$, and the remainder of this proof consists in proving this claim.

We first explain
the process in the $2$-dimensional case. Consider $\R^2$ with three half-lines $X_{13}=\{x=y, x\leq 0\}$, $X_{23}=\{x=0, y\geq 0\}$ and $X_{12}=\{y=0, x\geq 0\}$.
If $X'_{12}$ is another half-line tangent to $X_{12}$ at the origin, it can be written as $\{y=g(x), x\geq 0\}$ where $g:\R\to\R$ is a smooth function
with $g(0)=0$ and $g'(0)=0$. We then define
$\psi(x,y)=(x-g(x), y-g(x))$, this is a smooth local diffeomorphism preserving $X_{13}$, $X_{23}$ and mapping $X'_{12}$ to $X_{12}$. This process works parametrically
and therefore allows to straighten the family of decompositions smoothly along $W_{123}$ one edge at a time. Note that the three edges can be permuted by linear automorphisms, so that it is enough to be able to straighten one of them.

In higher dimension, we similarly straighten all faces of the simplex by analogous diffeomorphisms, starting from edges all the way up to the facets.
Explicitly, in $\R^n$, we consider the standard decomposition in $(n+1)$ pieces spanned by the vectors $(1,0,\dots,0)$, $(0,1,\dots,0)$, ...,$(0,,\dots,0,1)$ and $(-1,\dots,-1)$, namely
the $k$-faces consist of all linear combination with positive coefficients of $k$ of these vectors. Assume we have straigthened all faces up to dimension $p-1$, and we wish to straighten the
$p$--face $F$ which is tangent to $x_{p+1}=\dots=x_n=0$. Note that, like in dimension $2$, up to applying linear automorphisms, we only need to show how to straighten one of the $p$--faces.
The $p$--face $F$ can be written (near the origin) as a graph $x_{p+1}=f_{p+1}(x_1,\dots,x_p), \dots, x_n=f_n(x_1,\dots,x_p)$.
We will progressively achieve $f_{p+1}=0$, $f_{p+2}=0$, ..., $f_n=0$ by successively applying certain diffeomorphisms. Assume
we have already achieved $f_{p+1}=\dots=f_k=0$, so $F$ is given by
\[F=\{(x_1,\dots,x_p,0,\dots,0,f_{k+1}(x_1,\dots,x_p),\dots,f_n(x_1,\dots,x_p))\}\]
where automatically $f_{k+1}$ factors as $f_{k+1}(x_1,\dots,x_p)=x_1\dots x_p\,a(x_1,\dots,x_p)$
because we have already straightened the lower dimensional faces.

For $I$ a subset of $\{1,\dots,n\}$, we denote $S_m^\alpha(I)=\sum_{i_1<\dots<i_m\in I}x_{i_1}^\alpha\dots x_{i_m}^\alpha$ and define a diffeomorphism $
x=(x_1,\ldots,x_n) \mapsto 
\varphi(x)=(X_1(x),\dots,X_n(x))$ by setting:

for $i\leq p$ or $i>k$ :
$$X_i=x_i-\left(S_p^1(\cg1,p\cd\big)+\sum_{q=1}^{\lfloor\frac p2\rfloor} x_i^q S^1_{p-2q}\big(\cg1,p\cd\big) S_1^q\big(\cg p+1,k\cd\big)\right) a(x_1,\ldots,x_p),$$

and for $p<i\leq k$ :
$$X_i=x_i-\left(\sum_{q=1}^{\lfloor\frac p2\rfloor} x_i^q S^1_{p-2q}\big(\cg1,p\cd\big) S_1^q\big(\cg p+1,k\cd\setminus\{i\}\big)+\sum_{q=0}^{\lfloor\frac{p-1}2\rfloor} x_i^{2q+1} S^1_{p-2q-1}\big(\cg1,p\cd\big)\right) a(x_1,\ldots,x_p).$$

Remark that the other $p$-faces are of the form $\{x_i=0, i\in I\}$ for $I\subset \{1,\dots,n\}$, $|I|=n-p$, $I\neq \{p+1,\dots,n\}$ 
or of the form $\{x_i=x_j, i,j\in I\}$ with $I\subset\{1,\dots,n\}$, $|I|=n-p+1$. 
One can check that, under the diffeomorphism $\varphi$,

\begin{itemize}
 \item the face $F$ is mapped to a face of the form 
 \[(x_1,\dots,x_p, 0,\dots, 0 , g_{k+2}(x_1,\dots,x_p),\dots,g_n(x_1,\dots,x_p))\]
  for some functions $g_{k+2},\dots,g_n$,
 \item every other $p$--face of the form $\{x_i=0, i\in I\}$ for $I\subset \{1,\dots,n\}$, $|I|=n-p$, $I\neq \{p+1,\dots,n\}$ is mapped to itself,
 \item every other $p$--face of the form $\{x_i=x_j, i,j\in I\}$ with $I\subset\{1,\dots,n\}$, $|I|=n-p+1$), is mapped to itself.
\end{itemize}

The lower-dimensional faces are automatically preserved since they are intersections of $p$-faces.
After a finite number of iterations, we have straightened the face $F$ and we continue the process to straighten all faces.
\end{proof}

\begin{example}\label{ex:sphere}
For $n\geq 1$, let $S^{n+1}$ be the unit sphere of $\R^{n+2}$ and $p\colon S^{n+1}\to \R^{n-1}$
the projection map forgetting the last three coordinates. The standard decomposition of an $(n-1)$--simplex induces a decomposition of $\R^{n-1}$ into $n$ pieces centered at the origin (identifying $\R^{n-1}$ with the interior of the simplex). Pull it back by $p$: this is a multisection
of genus $0$ of $S^{n+1}$. The central surface is a 2--sphere and all other pieces are balls, which are $1$--handlebodies of genus~$0$, of varying dimensions.
\end{example}

Beware that if we use non-standard decompositions, like on the middle and on the right of Figure~\ref{fig:nondiff2disk},
we obtain very similar-looking decompositions of a manifold, but which are not strictly speaking multisections as they do not
abide by the above local model up to \emph{diffeomorphism}.

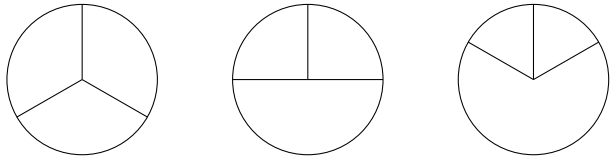
\begin{figure}[htb]
\begin{center}
\begin{tikzpicture}
 \foreach \x in {0,3,6}
 \draw (\x,0) circle (1);
 \foreach \x/\y in {0/1,0.87/-0.5,-0.87/-0.5}
 \draw (0,0) -- (\x,\y);
 \foreach \x/\y in {3/1,2/0,4/0}
 \draw (3,0) -- (\x,\y);
 \foreach \x/\y in {6/1,6.87/0.5,5.13/0.5}
 \draw (6,0) -- (\x,\y);
\end{tikzpicture}
\caption{Non-diffeomorphic decompositions of the $2$--disk} \label{fig:nondiff2disk}
\end{center}
\end{figure}

\begin{remark}\label{rem:orient}
If a closed $(n+1)$--manifold $W$ admits a multisection, then it is orientable. Indeed, an orientation
of the central surface $\Sigma$, together with a total order on the set of codimension--$0$ pieces, induce
an orientation of $W$ and of each piece $W_I$ inductively via the following rule:
\[\del W_I = \bigcup_{j\not\in I}(-1)^{|\{i\in I\mid i<j\}|}W_{I\cup\{j\}}\]
and the ``outward normal first" convention.

The normal bundle of $\Sigma$ in $W$ has a trivialization defined by choosing a basis $(v_1,\dots,v_{n-1})$
of $TW/T\Sigma$ such that $v_i\in T W_{\{i\}^c}$ and $v_i$ is outward pointing from $W_{\{i\}^c}$. Given an orientation
of $\Sigma$ we can check that the orientation induced by the above condition is compatible
with the orientation near $\Sigma$ induced by this trivialization.

Finally, if we reverse the orientation of $\Sigma$, the induced orientation of $W$ is also reversed.
If we change the order on the indexing set by a permutation $\sigma$, the orientation of $W$ is
reversed if and only if $\sigma$ is odd.
\end{remark}

\begin{remark}
The connected sum of multisected manifolds is again multisected (by performing the connected sum near
a point of the central surface).
\end{remark}

\section{Multisection diagrams}
\label{sec:diagrams}

A {\em complete system of disks} of a $3$--dimensional handlebody $H$ is a collection of disjoint properly embedded disks in $H$ such that cutting $H$ along all these disks yields a $3$--sphere.
A {\em cut-system} for $H$ is an unordered family of disjoint simple closed curves on~$\partial H$ that bound a complete system of disks of $H$.
Given an $n$--section $\mathcal M$ of a smooth $(n+1)$--manifold $W$ as above, denote $\Sigma=\cap_{1\leq i\leq n}W_i$ the central surface and choose for all $i\in\{1,\dots,n\}$ a cut-system $\alpha^i=(\alpha_j^i)_{1\leq j\leq g}$ on~$\Sigma$ for the 3--dimensional handlebody $\cap_{k\neq i}W_k$. Then $(\Sigma;\alpha^1,\dots,\alpha^n)$ is an {\em $n$--section diagram} for $(W,\mathcal M)$.
This is not unique, but it is well-known that each system of curves $(\alpha^i_1,\dots,\alpha^i_g)$ is unique up to permutation and handleslides. Hence the $n$--section diagram associated to a multisected manifold is unique up to diffeomorphism and handleslides (performed independently in each family $\alpha^i$).
See Figures~\ref{fig:gen1quad}, \ref{fig:genusn} and \ref{fig:S2xS3} for examples of multisection diagrams.

We define an abstract multisection diagram recursively, as follows.
\begin{definition}\label{def:diagram}
An {\em abstract $n$--section diagram} is a genus--$g$ closed surface $\Sigma$ with $n$ families of $g$ disjoint and homologically independent simple closed curves, such that any subcollection of $k$ of these families, with $2\leq k\leq n-1$, is a $k$--section diagram for a multisection of a connected sum of copies of $S^1\times S^k$.
\end{definition}

\begin{theorem}\label{thm:diagramVSmultisection}
Let $n\geq2$.
\begin{enumerate}
\item For $n\leq 6$, every abstract $n$--section diagram is the diagram of some smooth multisected $(n+1)$--manifold.
\item For $n\leq 5$, the multisected manifold associated to a given diagram is unique up to multisection-preserving diffeomorphism.
\item For arbitrary $n$, every abstract $n$--section diagram is the diagram of some PL--multisected $(n+1)$--manifold
which is unique up to multisection-preserving PL--homeomorphism.
\end{enumerate}
\end{theorem}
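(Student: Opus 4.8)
The plan is to construct the multisected manifold by building up its handle structure from the diagram, dimension by dimension, and then to prove uniqueness by analysing the ambiguity at each stage. We proceed in three passes, following the logical structure of the statement.

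\emph{Construction (smooth case, $n\le 6$; PL case, all $n$).} First I would thicken the central surface $\Sigma$ to $\Sigma\times\Delta^{n-1}$, which serves as a neighbourhood of the central surface in the multisected manifold, with the faces $\Sigma\times\Delta^{n-1}_i$ playing the role of the codimension-one walls. Next, the hypothesis that each pair $(\Sigma;\alpha^i,\alpha^j)$ is a Heegaard diagram of some $\#_{k}S^1\times S^2$ lets me glue in, along each $2$-dimensional sector, a $4$-dimensional piece $W_{ij}$: this is exactly the statement that the genus-$g$ Heegaard splitting of $\#S^1\times S^2$ is standard, so the two handlebodies and their common boundary bound a canonical $4$-manifold (a boundary connected sum of $S^1\times B^3$'s), by Laudenbach--Poénaru \cite{LP} / Montesinos \cite{Mon}. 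Iterating: given that any $k$-tuple of the families forms a $k$-section diagram of some $\#S^1\times S^k$, and that (inductively) such a multisected manifold is unique, I attach the $(k+1)$-dimensional pieces $W_I$ with $|I|=n+1-k$ along their already-constructed boundaries $\partial W_I=\#S^1\times S^k$; here one needs that $\#S^1\times S^k$ bounds a unique (up to diffeo/PL-homeo, rel boundary) $1$-handlebody, i.e.\ that the cobordism-to-a-point is standard. At the last stage, $|I|=1$, the boundary of each $W_i$ is a fixed $\#S^1\times S^n$, and we cap it off with an $(n+1)$-dimensional $1$-handlebody. The dimension restrictions $n\le 6$ (existence) and $n\le 5$ (uniqueness) in the smooth category come precisely from the step of capping off $\#S^1\times S^{n}$: by Cavicchioli--Hegenbarth \cite{CH} (building on Laudenbach--Poénaru and Montesinos), $\#S^1\times S^m$ bounds a unique smooth $1$-handlebody when $m\le 5$ and a (possibly non-unique, but still existing) one when $m\le 6$; in the PL category there is no such restriction.

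\emph{Uniqueness.} Suppose $(W,\MM)$ and $(W',\MM')$ both realize the same abstract diagram $(\Sigma;\alpha^1,\dots,\alpha^n)$. I would build a diffeomorphism (resp.\ PL-homeomorphism) $W\to W'$ inductively over the $k$-spines. On the $2$-spine it is the identification of the two central surfaces carrying the same curve systems. Extending over the $3$-dimensional pieces $W_{i_1\cdots i_{n-1}}$: each is a $3$-handlebody determined up to isotopy by its cut-system on $\Sigma$, so the identification extends, uniquely up to isotopy rel $\Sigma$ (handlebodies of genus $g$ have connected, indeed contractible in the relevant sense, diffeomorphism-rel-boundary behaviour after fixing a cut-system). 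Now suppose the homeomorphism is built over the $k$-spine; to extend over a piece $W_I$ with $|I|=n+2-(k+1)$ one has a diffeomorphism of $\partial W_I$ already given, and must extend it across $W_I$. Since $W_I$ is a $1$-handlebody, this reduces to: a diffeomorphism of $\#S^1\times S^{|I|-?}$ that extends over one $1$-handlebody extends over any other filling, which is again the Cavicchioli--Hegenbarth uniqueness input in the relevant dimension. Taking the union of these extensions over all pieces of a given dimension (they agree on overlaps by the inductive hypothesis) yields the required multisection-preserving homeomorphism.

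\emph{Expected main obstacle.} The genuinely delicate point is the passage across each new layer of handlebodies: I must know not only that $\#S^1\times S^m$ bounds a $1$-handlebody uniquely, but that the \emph{inherited multisection} on that boundary (the one described after Definition~\ref{def:multisection} via a transverse vector field) matches the standard multisection of $\#S^1\times S^m$ used in Definition~\ref{def:diagram}, and that the filling can be chosen compatibly with the corner structure prescribed by the last bullet of Definition~\ref{def:multisection}. In other words, all the gluings must be performed respecting corners and respecting the local model $\R^{n+2-k}\times\Delta^{k-1}$, which requires a careful smoothing-of-corners argument (a parametrized version of the fact that the space of such transverse vector fields is convex, hence contractible) so that the choices made on lower strata propagate coherently. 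Organizing this corner bookkeeping --- rather than the handlebody-uniqueness theorems, which are cited --- is where the real work lies; everything else is an induction on $|I|$ using \cite{LP}, \cite{Mon}, \cite{CH}.
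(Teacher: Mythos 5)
Your proposal follows essentially the same route as the paper: thicken $\Sigma$ to $\Sigma\times\Delta^{n-1}$, glue in $1$--handlebodies of increasing dimension using the abstract-diagram constraints together with Laudenbach--Po\'enaru, Montesinos, and Cavicchioli--Hegenbarth, and prove uniqueness by extending the identification of central surfaces spine by spine, inserting collars to handle corners. The induction is organized exactly as in the paper (existence at dimension $n+1$ driven by uniqueness at dimension $\leq n$), and your emphasis on the corner/collar bookkeeping correctly identifies where the careful work lies.

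One dimension-counting slip worth flagging: a $1$--handlebody $H$ of dimension $d$ has $\partial H\cong\#_g(S^1\times S^{d-2})$, so the boundary of a top piece $W_i$ of the $(n+1)$--manifold is $\#S^1\times S^{n-1}$, not $\#S^1\times S^{n}$ as you wrote, and Cavicchioli--Hegenbarth in the smooth case (handlebody dimension $5$ or $6$) concerns extending diffeomorphisms of $\#S^1\times S^{m}$ for $m\leq 4$, not $m\leq 5$. Your final bounds $n\leq 6$ for existence and $n\leq 5$ for uniqueness are nonetheless the right ones, because the existence bound at dimension $n+1$ is inherited from the uniqueness needed at dimension $n$ (which requires extending over handlebodies of dimension up to $n\leq 6$), and the uniqueness bound requires extending over the top handlebodies of dimension $n+1\leq 6$; this is exactly how the paper's induction keeps the two statements one dimension apart.
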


Before proving Theorem \ref{thm:diagramVSmultisection}, let us state some remarks.

\begin{remark}\label{rem:exotic}
Milnor has shown that there exist exotic $7$--spheres~\cite{MilnorS7}. Take such a sphere, {\em ie} a smooth manifold $\Sigma^7$ which is PL--homeomorphic but not diffeomorphic to $S^7$.
Starting from the standard multisection of genus $0$ of $S^7$ from Example~\ref{ex:sphere}, we can perform a connected sum of $S^7$ with $\Sigma^7$ inside of one
of the $7$--dimensional pieces, say $W_1$. The modified $W_1$ is still contractible and a direct corollary of the h-cobordism theorem shows that
this manifold is still diffeomorphic to a ball. We have thus obtained a multisection of genus $0$ of $\Sigma^7$.
So $S^7$ and $\Sigma^7$ share a common multisection diagram, namely $S^2$ with an empty system of curves.
Hence, for $n\geq 6$, we cannot hope for the uniqueness up to  diffeomorphism of a manifold associated with an abstract $n$--section diagram.
\end{remark}

\begin{remark}
According to smoothing theory, all PL--manifolds of dimension at most $7$ are smoothable. Starting in dimension $8$, there are PL--manifolds which cannot be smoothed, and thus there may exist an $n$--section diagram which corresponds to a PL--manifold but to no smooth manifold (for that we would need
to know that such a manifold admits a PL--multisection, or even better show that all PL--manifolds do).
\end{remark}

The collection of curves in an abstract $n$--section diagram tells us how to glue the $3$--dimensional $1$--handlebodies $\cap_{k\neq i} W_k$
to the central surface $\Sigma=\cap_k W_k$. However we have no information on how to glue the higher-dimensional pieces, leading for instance
to the ambiguity explained in Remark~\ref{rem:exotic}. Fortunately, Laudenbach--Poénaru \cite{LP} have shown that any diffeomorphism
of the boundary of a $4$--dimensional $1$--handlebody $H$ extends to a diffeomorphism of $H$. This proves that there is only one way to
glue the $4$--dimensional pieces in our setting. In higher dimension, we have the following result of Cavicchioli--Hegenbarth \cite[Proposition~3.2]{CH},
which relies on surgery theory.

\begin{theorem}[Cavicchioli--Hegenbarth]\label{thm:cav:heg}
 If $H$ is an $n$--dimensional $1$--handlebody with $n\geq 5$, any PL--homeomorphism of $\partial H$ extends to a PL--homeomorphism of $H$.
This also holds for diffeomorphisms if $n=5$ or $6$.
\end{theorem}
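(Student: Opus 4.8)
The plan is to appeal to the structure of 1-handlebodies and reduce the extension problem, via handle-by-handle induction, to the single essential case: extending a self-homeomorphism of $\partial(S^1\times D^{n-1}) = S^1\times S^{n-2}$ over $S^1\times D^{n-1}$. Writing $H$ as a boundary connected sum of $k$ copies of $S^1\times D^{n-1}$, I would first recall that $\partial H \cong \#_k(S^1\times S^{n-2})$. A self-homeomorphism $\varphi$ of $\partial H$ acts on $H_1(\partial H)\cong H_1(H)\cong \Z^k$; after composing with a homeomorphism coming from an automorphism of $H$ (realizing handleslides and permutations of the factors — these visibly extend), one may assume $\varphi$ acts trivially, or at worst by $\pm 1$ on each factor, on $H_1$. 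The reflection of each $S^1$ factor extends over the corresponding $S^1\times D^{n-1}$, so one reduces to $\varphi_*=\id$ on $H_1$.

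The core of the argument is then the claim that a self-homeomorphism of $S^1\times S^{n-2}$ (for $n-2\geq 3$) inducing the identity on $H_1$ extends over $S^1\times D^{n-1}$. Here is where surgery theory enters, exactly as in Cavicchioli--Hegenbarth: the group $\pi_0\,\mathrm{Homeo}(S^1\times S^{n-2})$ (or its PL analogue) was computed in terms of the homotopy groups of spheres, the group of homotopy spheres $\Theta_{n-1}$, and low-dimensional $h$-cobordism/pseudo-isotopy data; one checks that every class is represented by a homeomorphism that either extends over $S^1\times D^{n-1}$ or differs from one by a ``twist'' $\tau_f\colon (\theta,x)\mapsto(\theta, f(\theta)(x))$ along a loop $f\colon S^1\to \mathrm{Homeo}(S^{n-2})$, and such twists extend because $\pi_1\,\mathrm{Homeo}(S^{n-2})\to \pi_1\,\mathrm{Homeo}(D^{n-1},\partial)$-type obstructions vanish in the relevant range (this is precisely the dimension restriction $n\geq 5$ in PL, $n\in\{5,6\}$ smoothly — beyond that, exotic phenomena à la Remark~\ref{rem:exotic} obstruct the smooth statement, while the PL/surgery machinery continues to work). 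For the boundary connected sum one proceeds by induction on $k$: a self-homeomorphism of $\#_k(S^1\times S^{n-2})$ can be cut along the separating $S^{n-2}$'s, and after isotopy made to respect the connected-sum decomposition, so that it is built from self-homeomorphisms of the individual $S^1\times S^{n-2}$ summands together with a ``mixing'' part that is again handled by automorphisms of $H$.

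The main obstacle I expect is the reduction step rather than the atomic extension: showing that an arbitrary $\varphi\in\mathrm{Homeo}(\partial H)$ can be isotoped to one that respects the boundary-connected-sum / handle decomposition of $H$. This is a genuine isotopy-theoretic statement about self-homeomorphisms of connected sums $\#_k(S^1\times S^{n-2})$ — one needs that the collection of ``belt spheres'' $\{pt\}\times S^{n-2}$ can, after isotopy, be made invariant, which uses that these spheres are homotopically distinguished and an innermost-sphere/Haken-type argument (or its high-dimensional surgery-theoretic surrogate, e.g. the machinery of Wall and the computation of mapping class groups of such manifolds). Since the dimensions here are $n-1\geq 4$, one cannot invoke Laudenbach--Poénaru directly but must cite the relevant high-dimensional results; this is exactly the content Cavicchioli--Hegenbarth package in \cite[Proposition~3.2]{CH}, so in our exposition we simply quote their statement. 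If a self-contained treatment were wanted, one would carry out the induction on $k$ together with the surgery-theoretic computation of $\pi_0$ of the relevant homeomorphism group, flagging the single point where smooth fails but PL survives.
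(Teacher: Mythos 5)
The paper does not give a proof of this statement: it quotes \cite[Proposition~3.2]{CH} verbatim and uses it as a black box in the proof of Theorem~\ref{thm:diagramVSmultisection}, remarking only that the argument ``relies on surgery theory.'' So there is no internal proof to match your sketch against, and you yourself close by saying one should simply quote Cavicchioli--Hegenbarth, which is precisely what the paper does.

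That said, the surgery-theoretic flavor of your sketch is in the right spirit, and you correctly tie the dimension restrictions to exotic-sphere phenomena (cf.\ Remark~\ref{rem:exotic}: for $n=7$ the smooth statement already fails at genus $0$ because $\Theta_7\neq 0$). But the structural plan you propose introduces a difficulty that the actual argument is designed to avoid. You want to reduce to the genus--$1$ atom $S^1\times D^{n-1}$ by isotoping an arbitrary self-homeomorphism of $\partial H\cong\#_k(S^1\times S^{n-2})$ so that it preserves the separating $(n-2)$--spheres realizing the connected-sum decomposition, and you flag this yourself as ``the main obstacle.'' It is worse than an obstacle: there is no innermost-disk machinery in dimension $n-1\geq 4$, and the statement that self-homeomorphisms of a high-dimensional connected sum can be normalized to respect the summands is itself a delicate surgery-theoretic fact, not an input one can assume. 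The Cavicchioli--Hegenbarth route does not need it: one first realizes every automorphism of $\pi_1(\partial H)\cong F_k$ as the boundary restriction of a self-homeomorphism of $H$ (permutations of handles, inversions, handleslides all visibly extend), reducing to a homeomorphism acting trivially on $\pi_1$; one then applies the surgery exact sequence to $\#_k(S^1\times S^{n-2})$ directly, for arbitrary $k$ at once, to conclude that such a homeomorphism lies in the image of the restriction map $\pi_0\,\mathrm{Homeo}(H)\to\pi_0\,\mathrm{Homeo}(\partial H)$. The dimension hypotheses and the PL/smooth dichotomy enter at exactly this surgery step, not in any decomposition along separating spheres. So if you were to flesh this out, drop the genus induction and the cutting, work with the connected sum wholesale, and cite the computation of the relevant structure/normal-invariant groups.
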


\begin{remark}
For $n=4$, this also holds for diffeomorphisms (Laudenbach--Poénaru \cite{LP}) and for PL--homeomorphisms (Montesinos \cite{Mon}).
For $n=3$, this holds if we further require the diffeomorphism or PL--homeomorphism of the surface to preserve a cut-system
$(\alpha_1,\dots,\alpha_g)$ (since any diffeomorphism or PL--homeomorphism of $S^2$ extends to $D^3$).
\end{remark}

\begin{proof}[Proof of Theorem~\ref{thm:diagramVSmultisection}]
We prove by induction on $n$ the existence part in dimension $\leq n+1$ and uniqueness in dimension $\leq n$.
In the differentiable category, the proof stops at $n=6$, while in the PL category it goes on for arbitrary $n$.

Pick an abstract $n$--section
diagram $(\Sigma;\alpha^1,\dots,\alpha^g)$.

We start the construction of a multisected manifold associated with this diagram by considering
$\Sigma\times \Delta$ where $\Delta=\Delta^{n-1}$ has the standard decomposition as in Definition~\ref{def:simplex}.
Pick a $1$--handlebody $H$ of genus $g$ and diffeomorphisms $f_i\colon \del H \to \Sigma$
such that $f_i^{-1}(\alpha_i)$ is a cut-system for $H$ for all $i$. Let $V_i$ be a tubular neighborhood of $p_i$ in $\del \Delta$ (recall that the $p_i$ are the vertices of~$\Delta$) and glue $H \times V_i$ along $\Sigma \times \Delta$ by the map $f_i\times \id$. Equip each $V_i$ with a standard simplex decomposition, with barycenter $p_i$ and a vertex $p_{ij}$ on the edge $[p_i,p_j]$ of $\Delta$. For each $i<j$, we have a $2$--sected $3$--manifold given by $M=(H\times \{p_{ij}\})\cup (\Sigma \times [p_{ij},p_{ji}]) \cup (H\times \{p_{ji}\})$ which has $(\Sigma;\alpha^i,\alpha^j)$ as a Heegaard diagram. According to the definition, this is a diagram for some Heegaard splitting of the boundary of a $4$--dimensional $1$--handlebody $Z$, and by the induction hypothesis, we conclude that
the $2$--sected manifolds $\del Z$ and $M$ are diffeomorphic. Therefore we can glue $Z$ along $M$. We continue
the process: the constraint on subcollections of curves from Definition~\ref{def:diagram} and the uniqueness result
in dimension $n$ (induction hypothesis) ensure that we always get connected sums of $S^1\times S^k$
along which we can glue $1$--handlebodies.

For the uniqueness part, assume two PL (resp. smooth) $(n+1)$--manifolds $W$ and $W'$ share a common multisection diagram.
It means $W$ and $W'$ can be multisected with central surfaces $\Sigma$ and $\Sigma'$ and defining collections of curves $(\alpha_j^i)_{1\leq j\leq g}$ and $(\beta_j^i)_{1\leq j\leq g}$, for $1\leq i\leq n$, such that there is a PL--homeomorphism (resp. diffeomorphism) $h:\Sigma\to\Sigma'$ satisfying $h(\alpha_j^i)=\beta_j^i$. First extend $h$ to collar neighborhoods of the boundaries of the $3$--dimensional handlebodies of the multisection, as a product with the identity in the normal direction to the boundary. Then the identification of the diagram curves implies that $h$ extends to the $3$--spine. At that point, $h$ is defined on the boundary of the $4$--dimensional pieces. Extend it again to collar neighborhoods of these boundaries, and then use Montesinos (resp. Laudenbach--Po\'enaru's) result (see above) to extend it to the $4$--spine. The process can be continued by induction using Cavicchioli--Hegenbarth's result (Theorem \ref{thm:cav:heg}), ending at the $6$--spine in the smooth case.
\end{proof}

\section{Examples}
\label{sec:examples}

\subsection{Genus 0}
Example~\ref{ex:sphere} gives a genus--$0$  multisection of $S^{n+1}$. By Theorem~\ref{thm:diagramVSmultisection}, $S^{n+1}$ is the only PL--manifold admitting a genus--$0$ $n$--section.

\subsection{Genus 1}
Consider the projection $\R^{k+1}\to\R^{k-1}$ forgetting the last two coordinates. The image of the unit sphere $S^k$ is the unit ball $B^{k-1}$. Viewed as a simplex, this $B^{k-1}$ admits the standard decomposition of Definition~\ref{def:simplex}, whose preimage is the {\em standard decomposition} of $S^k$. This decomposition can be written $S^k=\cup_{i=1}^kB_i$ where $\cap_{i\in I}B_i$ is a $(k-|I|+1)$--disk for any non-empty $I\subsetneq\{1,\dots,k\}$ and $\cap_{i=1}^kB_i\cong S^1$.

\begin{lemma} \label{lemma:spheregenusone}
 For $n\geq2$, the sphere $S^{n+1}$ admits at least $\lfloor\frac n2\rfloor$ non-homeomorphic genus--$1$ $n$--sections.
\end{lemma}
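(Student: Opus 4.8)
The plan is to build $\lfloor n/2 \rfloor$ explicit genus--$1$ $n$--sections of $S^{n+1}$ and then distinguish them by a topological invariant of one of the pieces. For the construction, I would start from the standard genus--$1$ multisection and modify it. Concretely, present $S^{n+1}$ as the join $S^1 * S^{n-1}$, or better, decompose $S^{n+1}$ using coordinates $(z,w) \in \C \times \R^n$ with $|z|^2 + |w|^2 = 1$: the circle $\{w=0\}$ is the central surface candidate, but for genus $1$ we instead want a genus--$1$ surface, so I would take $S^{n+1} = \partial(D^2 \times D^n)$ and split it as $(S^1 \times D^n) \cup_{T} (D^2 \times S^{n-1})$. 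Iterating an $n$--section structure on the $D^2 \times S^{n-1}$ part using the standard decomposition of $S^{n-1}$ (as in the paragraph preceding the lemma), while keeping $S^1 \times D^n$ essentially as one of the $1$--handlebody pieces, produces a family of candidate multisections. The parameter that varies is \emph{where the genus sits}: one chooses an integer $m$ with $1 \le m \le \lfloor n/2 \rfloor$ controlling, roughly, the size of the smallest intersection $\cap_{i \in I} W_i$ that still sees the nontrivial $S^1$ factor. I would make this precise by describing the diagram: the central surface is a torus $T^2$, and the curve systems $\alpha^1,\dots,\alpha^n$ are arranged so that for a subcollection $I$, the piece $\cap_{i \in I} W_i$ is either a solid torus or a ball depending on $|I|$ relative to $m$.

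The key steps, in order: (1) verify that each candidate is genuinely an abstract $n$--section diagram in the sense of Definition~\ref{def:diagram}, i.e.\ that every $k$--subcollection is a diagram of a (multisected) connected sum of copies of $S^1 \times S^k$ --- here genus $1$ forces the answer to be either $S^{k+1}$ or $S^1 \times S^k$, which is a finite check on the torus; (2) invoke Theorem~\ref{thm:diagramVSmultisection}(3) to conclude each diagram is realized by a unique PL $(n+1)$--manifold, and check by the explicit gluing that this manifold is $S^{n+1}$ (using, as in Remark~\ref{rem:exotic}, that contractible $1$--handlebodies glued along spheres give back the sphere, together with the h--cobordism/Poénaru-type extension results); (3) for distinguishing the $\lfloor n/2 \rfloor$ sections, compute for each one the homeomorphism type of some canonical piece $W_I$ --- the natural invariant is the smallest $k$ for which some $k$--dimensional piece $W_I$ (with $|I| = n-k+2$) is a \emph{solid torus} rather than a ball, equivalently the dimension at which the $S^1$ "enters" the spine. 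Two sections with different values of $m$ assign different such invariants, and since a homeomorphism of multisected manifolds carries pieces to pieces (matching the $|I|$ stratification), non-homeomorphic follows. Finally I would check the count: the constraint $1 \le m \le \lfloor n/2 \rfloor$ comes from a symmetry $m \leftrightarrow n+1-m$ (the solid-torus piece has a "dual" complementary solid-torus piece), so distinct sections correspond to $m$ up to this involution, giving exactly $\lfloor n/2 \rfloor$ classes.

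I expect the main obstacle to be step (3): pinning down an invariant that is manifestly preserved by multisection-preserving homeomorphisms \emph{and} manifestly distinguishes all $\lfloor n/2 \rfloor$ candidates. A homeomorphism of multisected manifolds need not be one we constructed, so I must argue intrinsically --- the cleanest route is to note that such a homeomorphism must preserve, for each $k$, the union $\cup_{|I|=n-k+2} W_I$ (the $k$--spine) and carry each connected $W_I$ to a $W_J$ of the same dimension; then the multiset of homeomorphism types $\{[W_I] : |I| = k\}$, for each $k$, is an invariant. I would verify that for the $m$-th section this multiset records "one solid torus at level $k = m+1$ (and its dual at level $n+1-m$), balls elsewhere," and that these multisets are pairwise distinct for $1 \le m \le \lfloor n/2\rfloor$. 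A secondary difficulty is bookkeeping in step (1): confirming the subcollection condition for \emph{all} $\binom{n}{k}$ subsets rather than a representative one; I would reduce this to a symmetry argument showing all $k$--subsets of a given "type" relative to $m$ give diffeomorphic pieces, so only a bounded number of cases remain.
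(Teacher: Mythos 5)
Your overall strategy --- build a family of genus--$1$ multisections parametrized by ``where the genus enters the spine'' and separate them by the dimensions of the solid-torus pieces, noting that a multisection-preserving homeomorphism respects the $|I|$-stratification --- matches the paper's and is sound. The invariant in your step~(3) is correct and is exactly what the paper records (``the highest dimension of a genus--$1$ piece is $n-k+2$'').

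The gap is in the construction itself. You concretely write down only the $k=1$ splitting $S^{n+1}=(S^1\times B^n)\cup(B^2\times S^{n-1})$, and the instructions ``iterating an $n$--section structure on the $D^2\times S^{n-1}$ part \ldots while keeping $S^1\times D^n$ essentially as one of the pieces'' produce a single decomposition, not a family indexed by $m$. You never specify how $m$ enters the geometry or the diagram, so none of the remaining $\lfloor n/2\rfloor-1$ candidates are actually defined; as written, step~(1) has nothing to check and step~(3) has nothing to compare. The paper's construction is the natural completion of your idea: for each $1\le k\le\lfloor n/2\rfloor$, write $S^{n+1}=\partial(B^{k+1}\times B^{n-k+1})=(S^k\times B^{n-k+1})\cup(B^{k+1}\times S^{n-k})$ and apply the standard decomposition (into $k$, resp.\ $n-k$, pieces) to \emph{both} sphere factors, yielding $n$ pieces whose total intersection is $S^1\times S^1=T^2$. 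This directly realizes your ``$m$'' as $k$, makes the genus-$1$ pieces $(\cap_i B_i)\times B^{n-k+1}\cong S^1\times B^{n-k+1}$ and $B^{k+1}\times(\cap_j B_j')\cong B^{k+1}\times S^1$ explicit, and gives the claimed invariant value $n-k+2$. Your route through abstract diagrams and Theorem~\ref{thm:diagramVSmultisection}(3) is also heavier than needed here --- the decompositions are already decompositions of $S^{n+1}$, so there is nothing to realize --- and it imports the burden of verifying Definition~\ref{def:diagram} on all $\binom{n}{k}$ subcollections. Finally, your symmetry $m\leftrightarrow n+1-m$ should be $k\leftrightarrow n-k$ (exchanging the two ball factors), which is what gives exactly $\lfloor n/2\rfloor$ classes from $k\in\{1,\dots,n-1\}$; double-check the bookkeeping there once the construction is fixed.
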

\begin{proof}
 For $0<k<n$, we have
 $$S^{n+1}=\partial(B^{k+1}\times B^{n-k+1})=(S^k\times B^{n-k+1})\cup(B^{k+1}\times S^{n-k}).$$
 We use the standard decompositions of $S^k$ and $S^{n-k}$. This provides a decomposition of $S^{n+1}$ into $n$ pieces that are either balls or genus--$1$ handlebodies. It is easily checked that this defines a multisection. Moreover, all the pieces of the multisection are genus--$0$ or genus--$1$ handlebodies and, for a given $k\leq\frac n2$, the highest dimension of a genus--$1$ piece is $n-k+2$. Note that replacing $k$ with $n-k$ simply exchanges the factors in the product.
\end{proof}

\begin{lemma} \label{lemma:S1timesSn}
 The manifold $S^1\times S^n$ admits a genus--$1$ $n$--section.
\end{lemma}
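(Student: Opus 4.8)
The plan is to produce an explicit genus--$1$ $n$--section of $S^1\times S^n$ by a handle-trading argument analogous to the construction of Heegaard splittings and trisections of products. First I would observe that $S^1\times S^n = \del(S^1\times D^{n+1})$, and that $S^1\times D^{n+1}$ is itself a genus--$1$ $1$--handlebody of dimension $n+2$. The idea is to build the multisection of $S^1\times S^n$ as (roughly) the induced multisection on the boundary of a suitably decomposed $S^1\times D^{n+1}$, but to do this cleanly it is easier to work directly on $S^1\times S^n$. Write $S^1 = [0,1]/(0\sim 1)$ and decompose the second factor using the standard decomposition $S^n=\cup_{i=1}^{n}B_i$ of Lemma~\ref{lemma:spheregenusone}'s proof (the one coming from the projection $\R^{n+1}\to\R^{n-1}$, with $\cap_{i=1}^n B_i\cong S^1$). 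This already gives an $n$-fold decomposition of $S^n$ into balls with a circle as total intersection; crossing with $S^1$ would give pieces of the form $S^1\times(\text{ball or }S^1)$, which are $1$--handlebodies, and total intersection $S^1\times S^1$, a genus--$1$ surface. The subtlety is that this naive product decomposition does not quite satisfy the corner/local-model condition of Definition~\ref{def:multisection}, and more seriously, one of the pieces, $S^1\times B_i$ where $B_i\cong D^n$, is $S^1\times D^n$ which is a genus--$1$ $1$--handlebody of the right dimension $n+1$ --- good --- but the lower pieces $S^1\times(\cap_{i\in I}B_i)$ for $|I|\geq 2$ are $S^1\times D^{n-|I|+1}$, again genus--$1$ $1$--handlebodies, and the central surface is the torus. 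So in fact the naive product works at the level of homeomorphism types; the real work is checking it is a genuine multisection.

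So the key steps, in order, would be: (1) recall the standard decomposition $S^n = \cup_{i=1}^n B_i$ with $\cap_{i\in I}B_i \cong D^{n-|I|+1}$ for $\emptyset\neq I\subsetneq\{1,\dots,n\}$ and $\cap_{i=1}^n B_i\cong S^1$; (2) set $W_I := S^1\times (\cap_{i\in I}B_i)$ for each nonempty $I\subseteq\{1,\dots,n\}$ and check that each $W_I$ is a $1$--handlebody of dimension $n+2-|I|$ (it is $S^1\times D^{n+1-|I|}$ for $|I|<n$, hence genus--$1$, and $S^1\times S^1$ for $I=\{1,\dots,n\}$, the central surface --- genus $1$); (3) verify the intersection condition $W_I\cap W_J = W_{I\cup J}$, which is immediate from the corresponding property of the $B_i$; (4) verify the local corner model: near a point of $\mathring W_I$ with $|I|=k$, a neighborhood looks like $\R\times(\text{neighborhood in }S^n)$, and the $S^n$-decomposition near such a point is, by construction of the standard decomposition (pulled back from the standard simplex decomposition of a ball via $\R^{n+1}\to\R^{n-1}$), PL/smoothly modelled on $\R^{n+1-k}\times\Delta^{k-1}$ with the $B_i\cap U$ going to $\R^{n+1-k}\times\Delta^{k-1}_{\sigma(i)}$, so crossing with $\R$ gives exactly the required model $\R^{n+2-k}\times\Delta^{k-1}$; (5) conclude that $(\{W_I\})$ is a genus--$1$ $n$--section of $S^1\times S^n$.

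An alternative, and probably cleaner, route is to invoke the inductive/boundary structure highlighted in the excerpt: decompose the $(n+2)$--dimensional $1$--handlebody $S^1\times D^{n+1}$ compatibly with the standard decomposition $D^{n+1}=\cup_i(\text{cone on }B_i)$ of the disk, so that on its boundary $S^1\times S^n$ one reads off an $n$--section directly from the remark that $\del W_I$ inherits a natural multisection. But since the statement only asks for existence of one genus--$1$ $n$--section, the explicit product construction above is the most economical. I expect the main obstacle to be purely bookkeeping: making the local corner model of the standard decomposition of $S^n$ explicit enough that the product with $S^1$ visibly matches Definition~\ref{def:multisection}'s fourth bullet, and double-checking the edge case $I=\{1,\dots,n\}$ where $\cap B_i$ jumps from a disk to a circle so that $W_{\{1,\dots,n\}}$ is genuinely a surface. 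I would also want to confirm that the $3$--dimensional pieces $\cap_{k\neq i}W_k = S^1\times(\cap_{k\neq i}B_k) = S^1\times D^2$ are solid tori, matching the requirement that the $n-1$--fold intersections be $3$--dimensional $1$--handlebodies, and note in passing that the associated diagram is the torus with $n$ copies of a single curve (all parallel, the $S^1\times\{*\}$ direction being the compressible one for each solid torus), which is consistent with Definition~\ref{def:diagram} since any $k$ of these families bound a multisection of $\#^0(S^1\times S^k)=S^{k+1}$... wait, that would need to be $S^1\times S^k$; I would double check this compatibility as part of the verification, since it is exactly the content of $S^1\times S^n$ admitting such a diagram and is the base case that makes Definition~\ref{def:diagram} non-vacuous.
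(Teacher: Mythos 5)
Your proposal is exactly the paper's proof, which is stated in one line: ``We use the standard decomposition of $S^n$ and consider its product with $S^1$.'' Your expanded verification of the pieces $W_I = S^1\times(\cap_{i\in I}B_i)$, the corner model, and the resulting diagram (a torus with $n$ parallel curves, whose $k$--subcollections indeed give $S^1\times S^k$, resolving your parenthetical doubt) is all correct and fills in what the paper leaves implicit.
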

\begin{proof}
 We use the standard decomposition of $S^n$ and consider its product with $S^1$.
\end{proof}

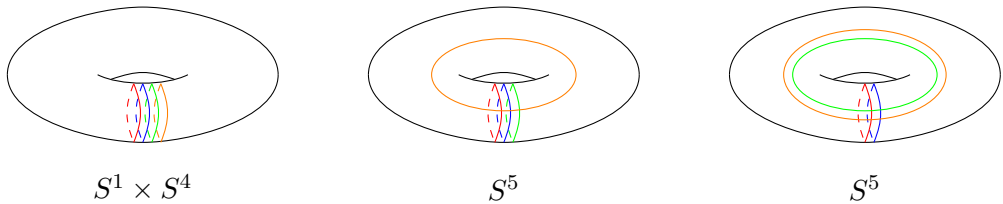
\begin{figure}[htb]
\begin{center}
\begin{tikzpicture} [scale=0.6]
\begin{scope}
 \draw (0,0) .. controls +(0,1) and +(-1,0) .. (3,1.5) .. controls +(1,0) and +(0,1) .. (6,0);
 \draw (0,0) .. controls +(0,-1) and +(-1,0) .. (3,-1.5) .. controls +(1,0) and +(0,-1) .. (6,0);
 \draw (2,0) ..controls +(0.5,-0.25) and +(-0.5,-0.25) .. (4,0);
 \draw (2.3,-0.1) ..controls +(0.6,0.2) and +(-0.6,0.2) .. (3.7,-0.1);
 \draw (3,-2.5) node {$S^1\times S^4$};
 \draw[orange] (3.4,-0.2) ..controls +(0.2,-0.5) and +(0.2,0.5) .. (3.4,-1.5);
 \draw[dashed,orange] (3.4,-0.2) ..controls +(-0.2,-0.5) and +(-0.2,0.5) .. (3.4,-1.5);
 \draw[green] (3.2,-0.2) ..controls +(0.2,-0.5) and +(0.2,0.5) .. (3.2,-1.5);
 \draw[dashed,green] (3.2,-0.2) ..controls +(-0.2,-0.5) and +(-0.2,0.5) .. (3.2,-1.5);
 \draw[blue] (3,-0.2) ..controls +(0.2,-0.5) and +(0.2,0.5) .. (3,-1.5);
 \draw[dashed,blue] (3,-0.2) ..controls +(-0.2,-0.5) and +(-0.2,0.5) .. (3,-1.5);
 \draw[red] (2.8,-0.2) ..controls +(0.2,-0.5) and +(0.2,0.5) .. (2.8,-1.5);
 \draw[dashed,red] (2.8,-0.2) ..controls +(-0.2,-0.5) and +(-0.2,0.5) .. (2.8,-1.5);
\end{scope}
\begin{scope} [xshift=8cm]
 \draw (0,0) .. controls +(0,1) and +(-1,0) .. (3,1.5) .. controls +(1,0) and +(0,1) .. (6,0);
 \draw (0,0) .. controls +(0,-1) and +(-1,0) .. (3,-1.5) .. controls +(1,0) and +(0,-1) .. (6,0);
 \draw (2,0) ..controls +(0.5,-0.25) and +(-0.5,-0.25) .. (4,0);
 \draw (2.3,-0.1) ..controls +(0.6,0.2) and +(-0.6,0.2) .. (3.7,-0.1);
 \draw (3,-2.5) node {$S^5$};
 \draw[green] (3.2,-0.2) ..controls +(0.2,-0.5) and +(0.2,0.5) .. (3.2,-1.5);
 \draw[dashed,green] (3.2,-0.2) ..controls +(-0.2,-0.5) and +(-0.2,0.5) .. (3.2,-1.5);
 \draw[blue] (3,-0.2) ..controls +(0.2,-0.5) and +(0.2,0.5) .. (3,-1.5);
 \draw[dashed,blue] (3,-0.2) ..controls +(-0.2,-0.5) and +(-0.2,0.5) .. (3,-1.5);
 \draw[red] (2.8,-0.2) ..controls +(0.2,-0.5) and +(0.2,0.5) .. (2.8,-1.5);
 \draw[dashed,red] (2.8,-0.2) ..controls +(-0.2,-0.5) and +(-0.2,0.5) .. (2.8,-1.5);
 \draw[orange] (3,0)ellipse(1.6 and 0.8);
\end{scope}
\begin{scope} [xshift=16cm]
 \draw (0,0) .. controls +(0,1) and +(-1,0) .. (3,1.5) .. controls +(1,0) and +(0,1) .. (6,0);
 \draw (0,0) .. controls +(0,-1) and +(-1,0) .. (3,-1.5) .. controls +(1,0) and +(0,-1) .. (6,0);
 \draw (2,0) ..controls +(0.5,-0.25) and +(-0.5,-0.25) .. (4,0);
 \draw (2.3,-0.1) ..controls +(0.6,0.2) and +(-0.6,0.2) .. (3.7,-0.1);
 \draw (3,-2.5) node {$S^5$};
 \draw[blue] (3.2,-0.2) ..controls +(0.2,-0.5) and +(0.2,0.5) .. (3.2,-1.5);
 \draw[dashed,blue] (3.2,-0.2) ..controls +(-0.2,-0.5) and +(-0.2,0.5) .. (3.2,-1.5);
 \draw[red] (3,-0.2) ..controls +(0.2,-0.5) and +(0.2,0.5) .. (3,-1.5);
 \draw[dashed,red] (3,-0.2) ..controls +(-0.2,-0.5) and +(-0.2,0.5) .. (3,-1.5);
 \draw[green] (3,0)ellipse(1.6 and 0.8);
 \draw[orange] (3,0)ellipse(1.8 and 1);
\end{scope}
\end{tikzpicture}
\caption{Genus--$1$ quadrisection diagrams} \label{fig:gen1quad}
\end{center}
\end{figure}

The multisections of Lemma~\ref{lemma:spheregenusone} have diagrams with two groups of $k$ and $n-k$ parallel curves respectively, where two curves from distinct groups meet at exactly one point and transversely; the multisection given for $S^1\times S^n$ has a diagram with $n$ parallel curves, see Figure~\ref{fig:gen1quad}.

\begin{lemma}
 For $n>3$, the multisections of Lemmas~\ref{lemma:spheregenusone} and~\ref{lemma:S1timesSn} are the only genus--$1$ $n$--sections, up to PL--homeomorphism.
\end{lemma}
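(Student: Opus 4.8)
The strategy is to analyze a genus--$1$ $n$--section diagram purely combinatorially on the torus $\Sigma = T^2$, and then invoke the uniqueness part of Theorem~\ref{thm:diagramVSmultisection} to conclude that the combinatorial data determines the manifold. A genus--$1$ $n$--section diagram consists of $n$ homologically nontrivial simple closed curves $\alpha^1,\dots,\alpha^n$ on $T^2$, each one alone being a single essential curve (since $g=1$), considered up to isotopy (handleslides are vacuous with one curve per family). Up to isotopy, an essential simple closed curve on $T^2$ is determined by its primitive homology class $\pm[\alpha^i] \in H_1(T^2;\Z) \cong \Z^2$. Moreover, for any two essential simple closed curves, the geometric intersection number equals the absolute value of the algebraic intersection number of their homology classes, and two such curves can be simultaneously isotoped to meet in exactly that number of points.

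First I would record the constraint coming from Definition~\ref{def:diagram}: every pair $(\alpha^i,\alpha^j)$, $i\neq j$, must be a Heegaard diagram for a connected sum of copies of $S^1\times S^2$, hence (genus $1$) for $S^3$ or $S^1\times S^2$. In Heegaard-diagram terms on $T^2$, this means $|[\alpha^i]\cdot[\alpha^j]| \in \{0,1\}$: intersection number $1$ gives $S^3$, intersection number $0$ (parallel curves) gives $S^1\times S^2$, and intersection number $\geq 2$ would give a lens space, which is excluded. So the data reduces to a collection of $n$ primitive classes $v_1,\dots,v_n\in\Z^2$, pairwise with $|v_i\wedge v_j|\le 1$. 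Next I would classify such collections. If all the $v_i$ are parallel (proportional), then after a change of basis of $H_1(T^2)$ — realized by a diffeomorphism of $T^2$ — all curves are isotopic to the standard longitude; this is exactly the diagram of the $S^1\times S^n$ multisection of Lemma~\ref{lemma:S1timesSn}. If not all are parallel, pick two with $v_i\wedge v_j = \pm 1$; change basis so $v_i = (1,0)$, $v_j=(0,1)$. Then I claim every remaining $v_k=(a,b)$ satisfies $|a|\le 1$ and $|b|\le 1$ (from pairing with $v_j$ and $v_i$ respectively), so $v_k\in\{(\pm1,0),(0,\pm1),(\pm1,\pm1)\}$ up to sign, i.e. $v_k\in\{(1,0),(0,1),(1,1),(1,-1)\}$; and pairing $(1,1)$ against $(1,-1)$ gives $\wedge = -2$, so at most one of these "diagonal" classes can occur, and it cannot occur at all if both "diagonals" would be needed — in fact a short case-check shows that, up to the $GL_2(\Z)$-action and reordering, the only possibility is that the multiset $\{v_1,\dots,v_n\}$ consists of some number $k\geq 1$ of copies of $(1,0)$ and $n-k\geq 1$ copies of $(0,1)$. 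This matches precisely the description of the diagrams of the Lemma~\ref{lemma:spheregenusone} multisections of $S^{n+1}$, and the value of $k$ (equivalently $\min(k,n-k)$, since swapping the two classes is a diffeomorphism) is the discrete invariant distinguishing them, so all $\lfloor n/2\rfloor$ possibilities genuinely arise.

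Finally, having shown that every genus--$1$ abstract $n$--section diagram is diffeomorphic (rel nothing, just as a diagram) to one of the diagrams in Lemmas~\ref{lemma:spheregenusone} and~\ref{lemma:S1timesSn}, the uniqueness statement of Theorem~\ref{thm:diagramVSmultisection} (part (2) in the smooth case for $n\le 5$, part (3) in the PL case in general — but here we only claim the result up to PL--homeomorphism) implies that the multisected manifold is determined up to (PL--)homeomorphism by the diagram, completing the proof. The main obstacle I anticipate is the case analysis for the non-parallel case: one must be careful that the constraint is only imposed pairwise, and check that no exotic configuration (e.g. involving a diagonal class together with many copies of $(1,0)$ and $(0,1)$) slips through — but since adding $(1,1)$ to a collection already containing both $(1,0)$ and $(0,1)$ is compatible with the pairwise bound, I should double-check whether such a diagram is genuinely new or is PL--homeomorphic (via a diffeomorphism of $T^2$ not fixing the basis, applied after reordering) to one already on the list. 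In fact $(1,0),(0,1),(1,1)$ is carried to $(1,0),(0,1),(-1,0)$ by an element of $GL_2(\Z)$, i.e. to three curves in only two isotopy classes, which is the $k$-and-$(n-k)$ pattern again; so no new diagram arises, and the list is complete.
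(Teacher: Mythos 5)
Your argument has a genuine gap, and you almost spotted it yourself. The claim at the end, that $(1,0),(0,1),(1,1)$ is carried by an element of $GL_2(\Z)$ to $(1,0),(0,1),(-1,0)$, is false: the pairwise wedge products $v_i\wedge v_j$ are a $GL_2(\Z)$--invariant up to a global sign, and they are $\pm1,\pm1,\pm1$ for the first triple but $\pm1,\pm1,0$ for the second. So the configuration of three mutually transverse essential curves on $T^2$, each pair meeting once (the genus--$1$ trisection diagram of $\CP^2$, or $\overline{\CP^2}$ for $(1,-1)$), is genuinely different from any two-group configuration and does survive your pairwise test. Hence the pairwise constraint from Definition~\ref{def:diagram} alone does not force the curves into two parallel groups; an abstract $n$--section diagram could a priori contain a $\CP^2$--type triple, and your classification would then be incomplete.

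The fix, which is what the paper's proof uses, is to invoke the constraint on triples rather than just pairs: every subcollection of three families must be a trisection diagram for a connected sum of copies of $S^1\times S^3$, i.e.\ for $S^4$ or $S^1\times S^3$. In genus $1$ the triples of curves realizing these are exactly those falling into at most two parallel groups; the triples $(1,0),(0,1),(\pm1,\pm1)$ give $\CP^2$ or $\overline{\CP^2}$ and are excluded. (Here one really needs to know which closed $4$--manifolds admit genus--$1$ trisections with each pair giving $S^3$ or $S^1\times S^2$: these are $S^4$, $S^1\times S^3$, $\CP^2$, $\overline\CP^2$, and only the first two are connected sums of $S^1\times S^3$.) Once you use triples, the conclusion that all $n$ curves fall into at most two parallel groups follows immediately, and the rest of your argument (reduction to the listed diagrams by a surface diffeomorphism, then uniqueness via Theorem~\ref{thm:diagramVSmultisection}) is correct and matches the paper's. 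So the overall plan is sound; only the strength of the combinatorial constraint you invoked was insufficient.
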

\begin{proof}
 For a genus--$1$ multisection diagram, the subdiagrams defined by three curves have to be diagrams for $S^4$ or $S^1\times S^3$. It follows that the curves are necessarily divided into at most two groups of parallel curves. This concludes thanks to Theorem~\ref{thm:diagramVSmultisection}.
\end{proof}

\begin{corollary}
 For $n>3$, the only PL--manifolds of dimension $n+1$ admitting genus--$1$ multisections are $S^{n+1}$ and $S^1\times S^n$.
\end{corollary}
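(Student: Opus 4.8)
The plan is to deduce the statement directly from the classification of genus--$1$ multisections established in the previous lemma, so that no essentially new argument is needed. First I would invoke that lemma: up to multisection-preserving PL--homeomorphism, every genus--$1$ $n$--section of a closed $(n+1)$--manifold is one of the multisections of Lemma~\ref{lemma:spheregenusone} or the one of Lemma~\ref{lemma:S1timesSn}. The underlying manifolds of these models are $S^{n+1}$ and $S^1\times S^n$ respectively, so any $(n+1)$--manifold carrying a genus--$1$ multisection is PL--homeomorphic to one of these two. For the converse I would recall that Lemma~\ref{lemma:spheregenusone} does produce a genus--$1$ $n$--section of $S^{n+1}$ (indeed $\lfloor n/2\rfloor\geq 1$) and Lemma~\ref{lemma:S1timesSn} one of $S^1\times S^n$, and observe that $S^{n+1}$ and $S^1\times S^n$ are not homeomorphic, their fundamental groups being different for $n\geq 2$. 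Hence both manifolds occur, and they are the only ones.

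The hypothesis $n>3$ enters only through the previous lemma, and is not a real obstacle here: that lemma's proof selects a subcollection of three of the $n$ families of curves and uses the requirement of Definition~\ref{def:diagram} that such a subdiagram be a trisection diagram of a connected sum of copies of $S^1\times S^3$, which on a genus--$1$ surface confines the three slopes to at most two parallel classes. For $n\leq 3$ this leverage disappears --- the genus--$1$ trisection diagrams of $\CP^2$ and $\overline{\CP^2}$, and for $n=2$ the genus--$1$ Heegaard diagrams of the lens spaces, are no longer excluded --- which is exactly why the corollary is stated for $n>3$. I therefore expect the entire argument to reduce to a short citation of the previous lemma together with the identification of the underlying manifolds in the two standard models, with no step presenting any genuine difficulty.
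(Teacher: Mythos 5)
Your proposal is correct and follows essentially the same route as the paper: the corollary has no explicit proof there, being an immediate consequence of the preceding lemma that classifies all genus--$1$ multisections as those of Lemmas~\ref{lemma:spheregenusone} and~\ref{lemma:S1timesSn}, whose underlying manifolds are $S^{n+1}$ and $S^1\times S^n$. Your side remarks about realizability and distinctness, and about why the hypothesis $n>3$ is needed (the triple-subdiagram constraint from the abstract-diagram definition no longer forces at most two parallel classes once $n\leq 3$, which is why lens spaces and $\CP^2$, $\overline{\CP^2}$ reappear), accurately reflect the discussion surrounding the lemma and the sentence following the corollary.
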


Recall that there are more genus--$1$ manifolds for smaller $n$: all lens spaces in dimension $3$, $\CP^2$ and $\overline\CP^2$ in dimension $4$.

\subsection{Some genus--$n$ multisections}

Figure~\ref{fig:genusn} shows an $n$--section diagram of genus $n$ such that the associated manifolds have a fundamental group of order $p$ for a given $p>0$. It implies that there are infinitely many $(n+1)$--dimensional PL--manifolds with a multisection of genus $n$. These diagrams are a generalization of the trisection diagrams of spun lens spaces obtained by Meier in \cite{Meier}. Rudy Dissler recently proved that these diagrams can be realized more generally by higher-dimensional spun lens spaces, namely manifolds $(L^\circ\times S^k)\cup(S^2\times B^{k+1})$, where $L^\circ$ is a lens space with an open $3$--ball removed \cite{Dissler}.

\begin{figure}[htb]
\begin{center}
\begin{tikzpicture} [scale=0.5]
\begin{scope}
 \draw (0,0) circle (6.2);
 \foreach \t/\c in {0/red,90/blue,180/green,270/orange} {
 \draw[rotate=\t,yshift=-3cm] \trou;
 \draw[rotate=\t,\c] (-3.1,0) ellipse (0.8 and 1.3);
 \draw[rotate=\t,\c] (0.6,-3) .. controls +(1,0.5) and +(-0.5,-1) .. (3,-0.6);
 \draw[rotate=\t,\c,dashed] (0.6,-3) .. controls +(0.5,1) and +(-1,-0.5) .. (3,-0.6);
 \draw[rotate=\t+90,\c] (0.4,-2.94) .. controls +(1,0.5) and +(-0.5,-1) .. (2.94,-0.4);
 \draw[rotate=\t+90,\c,dashed] (0.4,-2.94) .. controls +(0.5,1) and +(-1,-0.5) .. (2.94,-0.4);
 \draw[rotate=\t,\c,dashed] (3.2,-0.6) .. controls +(1,-0.4) and +(-1,0.2) .. (6.05,-1.4);
 \draw[rotate=\t,\c,dashed] (3.25,-0.4) .. controls +(1,-0.4) and +(-1,0.2) .. (6.1,-1.2);
 \draw[rotate=\t,\c,dashed] (3.3,-0.2) .. controls +(1,-0.4) and +(-1,0.2) .. (6.1,-1);
 \draw[rotate=\t,\c,dashed] (3.3,0) .. controls +(1,-0.4) and +(-1,0.2) .. (6.15,-0.8);
 \draw[rotate=\t,\c,dashed] (3.3,0.2) .. controls +(1,-0.4) and +(-1,0.2) .. (6.15,-0.6);
 \draw[rotate=\t,\c,dashed] (3.25,0.4) .. controls +(1,-0.4) and +(-1,0.2) .. (6.18,-0.4);
 \draw[rotate=\t,\c] (6.05,-1.4) .. controls +(-1,0.6) and +(1,-0.1) .. (3.25,-0.4);
 \draw[rotate=\t,\c] (6.1,-1.2) .. controls +(-1,0.6) and +(1,-0.1) .. (3.3,-0.2);
 \draw[rotate=\t,\c] (6.1,-1) .. controls +(-1,0.6) and +(1,-0.1) .. (3.3,0);
 \draw[rotate=\t,\c] (6.15,-0.8) .. controls +(-1,0.6) and +(1,-0.1) .. (3.3,0.2);
 \draw[rotate=\t,\c] (6.15,-0.6) .. controls +(-1,0.6) and +(1,-0.1) .. (3.25,0.4);
 \draw[rotate=\t,\c] (6.18,-0.4) .. controls +(-0.4,3) and +(4,0) .. (0,5.6) .. controls +(-3,0) and +(0,3) .. (-5,0) .. controls +(0,-3) and +(-3,0) .. (0,-4.4) .. controls +(4,0) and +(1,-0.1) .. (3.2,-0.6);
 }
 \draw (0,-7.5) node {$n=4$ and $p=6$};
\end{scope}
\begin{scope} [xshift=13.5cm]
 \draw (0,0) circle (6.2);
 \foreach \t/\c in {0/red,72/blue,144/green,216/orange,288/purple} {
 \draw[rotate=\t+18,yshift=-3cm] \trou;
 \draw[rotate=\t+36,\c] (-3.1,0) ellipse (0.8 and 1.3);
 \draw[rotate=\t,\c] (1.5,-2.67) .. controls +(0.6,0.5) and +(-0.4,-1) .. (3,-0.6);
 \draw[rotate=\t,\c,dashed] (1.5,-2.67) .. controls +(0.5,1) and +(-0.6,-0.5) .. (3,-0.6);
 \draw[rotate=\t+72,\c] (1.3,-2.68) .. controls +(0.6,0.5) and +(-0.4,-1) .. (2.94,-0.4);
 \draw[rotate=\t+72,\c,dashed] (1.3,-2.68) .. controls +(0.5,1) and +(-0.6,-0.5) .. (2.94,-0.4);
 \draw[rotate=\t+144,\c] (1.1,-2.68) .. controls +(0.6,0.5) and +(-0.4,-1) .. (2.9,-0.2);
 \draw[rotate=\t+144,\c,dashed] (1.1,-2.68) .. controls +(0.5,1) and +(-0.6,-0.5) .. (2.9,-0.2);
 \draw[rotate=\t,\c,dashed] (3.2,-0.6) .. controls +(1,-0.4) and +(-1,0.2) .. (6.1,-1.2);
 \draw[rotate=\t,\c,dashed] (3.3,-0.2) .. controls +(1,-0.4) and +(-1,0.2) .. (6.15,-0.8);
 \draw[rotate=\t,\c,dashed] (3.3,0.2) .. controls +(1,-0.4) and +(-1,0.2) .. (6.18,-0.4);
 \draw[rotate=\t,\c] (6.1,-1.2) .. controls +(-1,0.6) and +(1,-0.1) .. (3.3,-0.2);
 \draw[rotate=\t,\c] (6.15,-0.8) .. controls +(-1,0.6) and +(1,-0.1) .. (3.3,0.2);
 \draw[rotate=\t,\c] (6.18,-0.4) .. controls +(-0.4,3) and +(4,0) .. (0,5.6) .. controls +(-2.5,0) and +(0,3.5) .. (-5,0) .. controls +(0,-2.5) and +(-3,0) .. (0,-4.4) .. controls +(3,0) and +(1,-0.1) .. (3.2,-0.6);
 }
 \draw (0,-7.5) node {$n=5$ and $p=3$};
\end{scope}
\end{tikzpicture}
\end{center}
\caption{Diagrams of $n$--sections with fundamental group of order $p$}
\label{fig:genusn}
\end{figure}
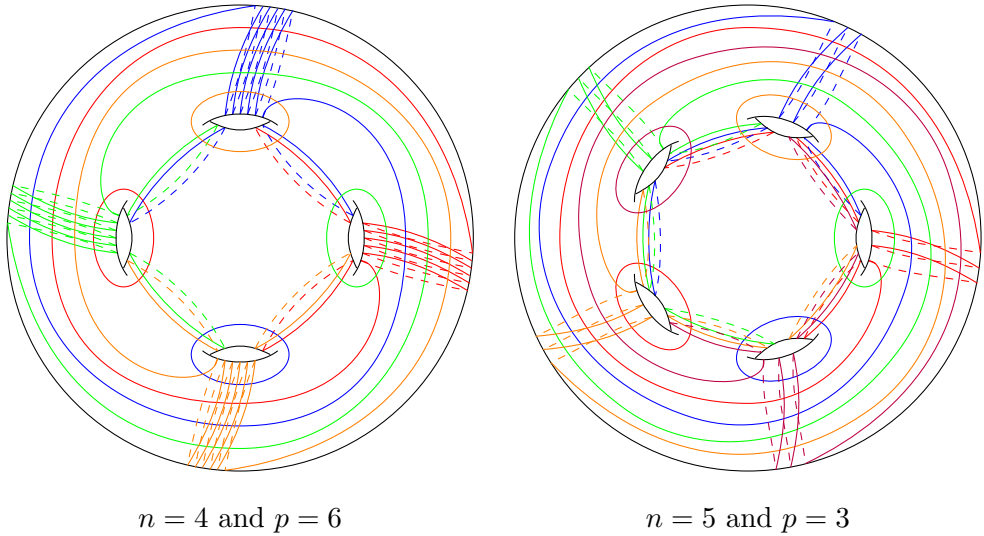

\subsection{The manifold $S^2\times S^3$}
To get a quadrisection of $S^2\times S^3$, we first decompose $S^3$ as a union $\cup_{i=1}^4B_i$ of $3$--balls $B_i$, where each $B_i\cap B_j$ is a $2$--disk, each $B_i\cap B_j\cap B_k$ is an interval and the global intersection is made of $2$ points, see the left image of Figure~\ref{fig:S2xS3}. Now we fix four disjoint $2$--disks~$D_i$, $1\leq i \leq 4$, on $S^2$ and we set $W_i=\big((S^2\setminus\mathrm{Int}(D_i))\times B_i\big)\cup(D_{i+1}\times B_{i+1})$. This defines a genus--$3$ quadrisection of $S^2\times S^3$ where $4$--dimensional pieces have genus $1$ and $5$--dimensional pieces have genus $0$, see the diagram on the right image of Figure~\ref{fig:S2xS3}. More generally, surface bundles are shown to admit multisections in \cite{Mmulti}.

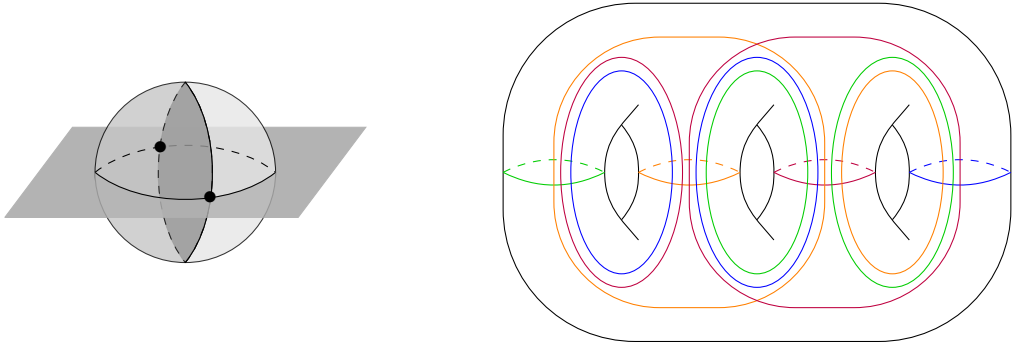
\begin{figure}[htb]
\begin{center}
\begin{tikzpicture}
\begin{scope}[scale=0.6]
 \draw[gray!60,fill=gray!60] (-4,-1) -- (2.5,-1) -- (4,1) -- (-2.5,1) -- (-4,-1);
 \draw[fill=gray!20,opacity=0.8] (0,-2) arc (-90:90:2) .. controls +(-0.8,-1) and +(-0.8,1) .. (0,-2);
 \draw[fill=gray!45,opacity=0.8] (0,2) arc (90:270:2) .. controls +(0.8,1) and +(0.8,-1) .. (0,2);
 \draw[dashed] (-2,0) .. controls +(1,0.8) and +(-1,0.8) .. (2,0);
 \draw[dashed,fill=gray!80,opacity=0.8] (0,-2) .. controls +(0.8,1) and +(0.8,-1) .. (0,2) .. controls +(-0.8,-1) and +(-0.8,1) .. (0,-2);
 \draw (0,-2) .. controls +(0.8,1) and +(0.8,-1) .. (0,2);
 \draw[gray!60,fill=gray!60,opacity=0.8] (-2,0) .. controls +(1,-0.8) and +(-1,-0.8) .. (2,0) -- (2,-1) -- (-2,-1) -- (-2,0);
 \draw (-2,0) .. controls +(1,-0.8) and +(-1,-0.8) .. (2,0);
 \foreach \s in {-1,1} {
 \draw[scale=\s] (0.55,-0.55) node {$\bullet$};}
\end{scope}
\begin{scope} [xshift=4cm,scale=0.45]
 \draw[rounded corners=50pt] (0.5,0) -- (0.5,-5) -- (15.5,-5) -- (15.5,5) -- (0.5,5) -- (0.5,0);
 \foreach \x in {4,8,12} {
 \draw (\x+0.5,2) .. controls +(-0.5,-0.6) and +(0,1) .. (\x-0.5,0) .. controls +(0,-1) and +(-0.5,0.6) .. (\x+0.5,-2);
 \draw (\x,1.4) .. controls +(0.5,-0.6) and +(0,0.3) .. (\x+0.5,0) .. controls +(0,-0.3) and +(0.5,0.6) .. (\x,-1.4);}
 \foreach \x/\c in {0/vert,4/orange,8/purple,12/blue} {
 \draw[color=\c] (\x+0.5,0) .. controls +(1,-0.5) and +(-1,-0.5) .. (\x+3.5,0);
 \draw[color=\c,dashed] (\x+0.5,0) .. controls +(1,0.5) and +(-1,0.5) .. (\x+3.5,0);}
 \foreach \x/\c in {4/blue,8/vert,12/orange} {
 \draw[color=\c] (\x,0) ellipse (1.5 and 3);}
 \foreach \x/\c in {4/purple,8/blue,12/vert} {
 \draw[color=\c] (\x,0) ellipse (1.8 and 3.4);}
 \foreach \x/\c in {0/orange,4/purple} {
 \draw[rounded corners=40pt,color=\c] (\x+2,0) -- (\x+2,-4) -- (\x+10,-4) -- (\x+10,4) -- (\x+2,4) -- (\x+2,0);}
\end{scope}
\end{tikzpicture}
\caption{Decomposition of $S^3$ and quadrisection diagram of $S^2\times S^3$} \label{fig:S2xS3}
\end{center}
\end{figure}

\section{Stabilizations}
\label{sec:stabilizations}

The stabilization move along an arc that occurs for Heegaard splittings and trisections naturally generalizes to higher-dimensional multisections.
In dimension $5$ and higher, we need to introduce higher-order stabilization moves, not only along arcs, but also along higher-dimensional disks.
The operation will be performed in an arbitrarily small neighborhood of a point of the central surface, so we may focus
on the local model $\R^2\times \R^{n-1}$ with the multisection induced by the standard decomposition of $\R^{n-1}$ (see Definition \ref{def:simplex} and Example \ref{ex:sphere}).

\begin{definition}
\begin{itemize}
 \item[]
 \item A \textit{half-disk} is a disk $\Delta$ whose boundary $\del \Delta$ is piecewise smooth and decomposed in two disks $\del_-\Delta$ and $\del_+\Delta$ which intersect precisely along their boundary. If $\Delta$ is $1$--dimensional, then $\del_-\Delta=\emptyset$.
 \item A half-disk $\Delta$ embedded in a manifold with boundary $W$ is \emph{standard} if $\Delta\cap\del W=\del_- \Delta$ and $\Delta$ is transverse to $\del W$. Note that if $\del W=\emptyset$, then $\Delta$ is necessarily $1$--dimensional.
 \item A half-disk $\Delta_I \subset W_I$ in a multisected manifold is \emph{$\MM$--standard} if $\Delta_J=\Delta_I\cap W_J$ is standard in $W_J$ for all $J\supset I$.
 \item A disk $D_I \subset W_I$ in a multisected manifold is \emph{boundary-parallel (with respect to the multisection)} if there is an $\MM$--standard half-disk $\Delta_I\subset W_I$ with $\del_+\Delta_I=D_I$ (see Figure~\ref{fig:halfdisk}).
\end{itemize}
\end{definition}

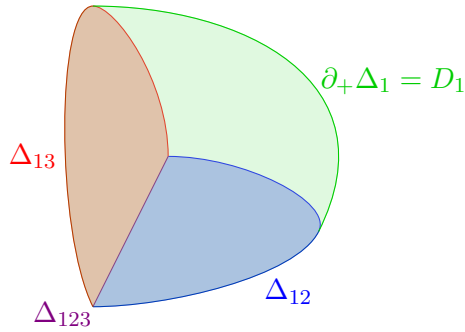
\begin{figure}[htb]
\begin{center}
\begin{tikzpicture}
 \draw[blue,fill=blue!30] (0,0) .. controls +(1,0) and +(-0.3,-0.6) .. (3,1) .. controls +(0.2,0.4) and +(1,0) .. (1,2) (2.6,0.2) node {$\Delta_{12}$};
 \draw[red,fill=red!30] (0,0) .. controls +(-0.5,1) and +(-0.5,0) .. (0,4) .. controls +(0.3,0) and +(0,1) .. (1,2) (-0.8,2) node {$\Delta_{13}$};
 \draw[color=violet] (0,0) -- (1,2) (-0.4,-0.1) node {$\Delta_{123}$};
 \draw[vert,fill=vert!40,opacity=0.3] (0,0) .. controls +(-0.5,1) and +(-0.5,0) .. (0,4) .. controls +(2,0) and +(1,2) .. (3,1) .. controls +(-0.3,-0.6) and +(1,0) .. (0,0);
 \draw[vert] (0,4) .. controls +(2,0) and +(1,2) .. (3,1) (4,3) node {$\partial_+\Delta_1=D_1$};
\end{tikzpicture}
\caption{For $n=3$, a half-disk $\Delta_1$ and its boundary} \label{fig:halfdisk}
\end{center}
\end{figure}

\begin{lemma} \label{lemma:halfdisks}
Let $W=\cup_i W_i$ be a closed multisected $(n+1)$--manifold. Let $I$ be a non-empty subset of $\{1,\dots,n\}$.
Any two $\MM$--standard half-disks $\Delta_I$, $\Delta'_I$ are isotopic among $\MM$--standard half-disks.
In particular, any two boundary-parallel disks are isotopic.
\end{lemma}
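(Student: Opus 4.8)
The plan is to prove the following slightly stronger statement, from which the lemma follows at once: \emph{every $\MM$--standard half-disk $\Delta_I\subset W_I$ is carried, by an $\MM$--preserving ambient isotopy of $W$, onto a fixed ``standard model'' half-disk $\Delta_I^{\mathrm{std}}$ supported in a chart around a preassigned point $p\in\Sigma$ in which the multisection is the local model $\R^2\times\R^{n-1}$ of Example~\ref{ex:sphere}.} Indeed, $\Delta_I^{\mathrm{std}}$ is well defined up to $\MM$--isotopy, independently of $p$, because $\Sigma$ is connected and a local-model chart is unique up to an $\MM$--preserving diffeomorphism isotopic to the identity, and ambient isotopies of $\Sigma$ extend to $\MM$--preserving ambient isotopies of $W$ (by the handlebody-extension results recalled above, applied in collars); so any two $\MM$--standard half-disks $\Delta_I,\Delta'_I$ are each $\MM$--isotopic to $\Delta_I^{\mathrm{std}}$, hence to one another, and an ambient isotopy carrying $\Delta_I$ to $\Delta'_I$ carries $\partial_+\Delta_I$ to $\partial_+\Delta'_I$, giving the last sentence.

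I would prove the strengthened statement by downward induction on $|I|$. The base case $|I|=n$, where $W_I=\Sigma$, is the observation that any embedded arc in the connected surface $\Sigma$ is ambient isotopic to an arbitrarily small standard arc. For the inductive step ($|I|<n$) I would straighten $\Delta_I$ one stratum at a time, from the deepest outward: first put $\Delta_I\cap\Sigma$ (an arc in $\Sigma$) in standard position near $p$ using the base case, then, for $k=n-1,n-2,\dots,|I|$ in turn, having already standardized and frozen the strata $\Delta_J=\Delta_I\cap W_J$ with $|J|>k$ (hence also $\partial_-\Delta_J=\bigcup_{j\notin J}\Delta_{J\cup\{j\}}$), move each $\Delta_J$ with $|J|=k$ and $J\supseteq I$ onto a thin collar of $\partial_-\Delta_J$ inside $W_J$. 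Distinct such $\Delta_J$ meet only along strata indexed by sets strictly larger than $J$, which are deeper and already frozen, so these moves are performed simultaneously and independently; at $k=|I|$ the half-disk $\Delta_I$ itself has become $\Delta_I^{\mathrm{std}}$.

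The geometric content is the single move ``isotope a half-disk $\Delta_J$ onto a thin collar of $\partial_-\Delta_J$''. Writing $\partial\Delta_J=\partial_-\Delta_J\cup_S\partial_+\Delta_J$ with $\partial_-\Delta_J\subset\partial W_J$ and $\partial_+\Delta_J$ properly embedded in $W_J$ (the half-disk condition $\Delta_J\cap\partial W_J=\partial_-\Delta_J$), and $S$ the common boundary sphere: just as a square is swept onto its bottom edge by a family of arcs rel the midpoints of its sides, $\Delta_J$ is swept onto $\partial_-\Delta_J$ by its foliation by properly embedded disks with boundary $S$ interpolating $\partial_+\Delta_J$ and $\partial_-\Delta_J$. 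Reparametrized, this is a path of embeddings of the abstract half-disk fixing a neighborhood of its $\partial_-$--face, so by the isotopy extension theorem it is realized by an ambient isotopy of $W_J$ supported in a neighborhood $N$ of $\Delta_J$ with $N\cap\partial W_J$ a neighborhood of $\Delta_J\cap\partial W_J=\partial_-\Delta_J$; in particular this isotopy is relative to $\partial W_J$, meets no stratum $W_{J'}$ with $J'\supsetneq J$ except along the frozen $\partial_-\Delta_J$, hence preserves the stratification of $W_J$, and extends to an $\MM$--preserving ambient isotopy of $W$ (again by the extension results, iterating over the $1$--handlebodies $W_{J'}$, $J'\subsetneq J$, that contain $W_J$ in their boundary). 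All intermediate half-disks are $\MM$--standard, being sub-disks of $\Delta_J$ that contain $\partial_-\Delta_J$ and therefore share its (unchanged, $\MM$--standard) deeper strata. Ending the sweep leaves an arbitrarily thin collar of $\partial_-\Delta_J$, and collars are unique up to isotopy, so the outcome depends only on $\partial_-\Delta_J$.

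I expect the main obstacle to be bookkeeping rather than geometry: one must order the induction so that every overlap of strata is standardized and frozen before the strata containing it are moved (hence the deepest-first order), and one must check that each elementary move extends to an ambient isotopy of $W$ preserving the entire multisection. The latter is an isotopy-extension statement in the stratified category --- an isotopy of a stratum $W_J$ rel $\partial W_J$ extends, one stratum at a time, over the $1$--handlebodies $W_{J'}$ with $J'\subsetneq J$ --- and it is here that the extension theorems of Laudenbach--Po\'enaru, Montesinos and Cavicchioli--Hegenbarth (Theorem~\ref{thm:cav:heg}) enter, exactly as in the proof of Theorem~\ref{thm:diagramVSmultisection}; the rest is the elementary picture of sweeping a square onto one of its edges.
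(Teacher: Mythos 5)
Your proof is essentially the paper's: work stratum by stratum from the central surface outward, putting $\Delta_J$ and $\Delta'_J$ into coincidence in order of decreasing $|J|$, using at each step that half-disks with the same negative boundary are isotopic rel that negative boundary, and extending the isotopy of $W_J$ to an $\MM$--preserving ambient isotopy of $W$. The detour through a fixed ``standard model'' half-disk is harmless (it just says every $\Delta_I$ is isotopic to $\Delta_I^{\mathrm{std}}$, hence any two are isotopic), and the explicit ``sweep onto the $\partial_-$--face'' is a nice way to make the key isotopy concrete.

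One point to correct: the extension of an isotopy of a stratum to an $\MM$--preserving ambient isotopy of $W$ does \emph{not} rely on the Laudenbach--Po\'enaru, Montesinos or Cavicchioli--Hegenbarth theorems. Those results extend a \emph{diffeomorphism} of $\partial H$ to a diffeomorphism of the handlebody $H$ and are genuinely hard; they are needed in Theorem~\ref{thm:diagramVSmultisection} precisely because one starts with two a priori unrelated manifolds and must build an identification of the higher-dimensional pieces from scratch. Here you already have a single manifold $W$ and an isotopy (a path from the identity), and you want to extend it through the stratification; this is the much softer stratified isotopy extension theorem. Concretely, realize the isotopy of $W_J$ by a time-dependent vector field tangent to the strata of $\partial W_J$, extend it to a neighborhood of $W_J$ using the local product charts $\R^{n+2-k}\times\Delta^{k-1}$ of Definition~\ref{def:multisection} (which makes it tangent to \emph{all} the $W_K$), damp it off with a bump function, and integrate. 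Invoking the boundary-extension theorems here is not merely overkill but a category error, since they do not produce isotopies. With that substitution your argument is correct and matches the paper's.
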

\begin{proof}
All disks are isotopic in a connected manifold and standard half-disks in a manifold with boundary
with the same negative boundary are isotopic relative to this negative boundary.

We use this fact and an induction procedure to prove the result.
Starting with $\Delta_{1,\dots,n}$ and $\Delta'_{1,\dots,n}$ which are embedded arcs in the connected surface $W_{1,\dots,n}$, and hence isotopic, we extend the corresponding isotopy of $W_{1,\dots,n}$ to an ambient isotopy of $W$
preserving the multisection.
Hence we can assume $\Delta_{1,\dots,n}=\Delta'_{1,\dots,n}$.

We continue with $J$ such that $I\subset J$ and $|J|=n-1$. We have $\del_-\Delta_J=\del_-\Delta'_J$
and hence $\Delta_J$ and $\Delta'_J$ are isotopic relative to the negative boundary. We may thus find
an ambient isotopy preserving the multisection, which is the identity on $W_{1,\dots,n}$
and takes $\Delta_J$ to $\Delta'_J$. The process goes on inductively.
\end{proof}

\begin{definition}
A tubular neighborhood $N$ of a codimension--$2$ boundary-parallel disk $D\subset W_I$ is in \emph{good position} if
it can be written $N=D\times E$ where $E$ is a disk of dimension $|I|+1$ together with a decomposition $E=\cup_{i\in I}E_i$ such that:
\begin{itemize}
 \item for all non-empty $J\subset I$, $\cap_{i\in J}E_i$ is a disk with corners of dimension $|I|-|J|+2$,
 \item for all non-empty $J\subset\{1,\dots,n\}$, we have $N\cap W_J=(D\cap W_J) \times E_{I\cap J}$, where $E_\emptyset=E$.
\end{itemize}
\end{definition}

\begin{lemma}\label{lem:tubnghd}
If two tubular neighborhoods $N$ and $N'$ of a codimension--$2$ boundary-parallel disk $D_I\subset W_I$ in a multisected manifold
are in good position, then they are isotopic relative to $D_I$ among such disks.
\end{lemma}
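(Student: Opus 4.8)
The plan is to argue by induction on the depth of the stratification of $W$ along $D_I$, in close parallel with the proof of Lemma~\ref{lemma:halfdisks}. Fix once and for all the model: a $(|I|+1)$--ball $E$ with its standard $|I|$--section indexed by $I$. A good-position tubular neighborhood of $D_I$ is then the same datum as an embedding $\iota\colon D_I\times E\hookrightarrow W_I$ onto a neighborhood of $D_I=\iota(D_I\times\{0\})$ with $\iota^{-1}(W_J)=(D_I\cap W_J)\times E_{I\cap J}$ for every non-empty $J$. Let $\iota,\iota'$ be the embeddings corresponding to $N,N'$; we seek an ambient isotopy of $W$, preserving the multisection and fixing $D_I$, that carries $\iota$ to $\iota'$. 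For $J\supseteq I$ set $D_J=D_I\cap W_J$, a disk of dimension $n+1-|I|-|J|$ (empty once $|J|>n+1-|I|$); as $J$ grows these record how $D_I$ meets deeper and deeper strata, down to a single point when $|J|=n+1-|I|$.

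First I would treat that deepest stratum: there $\iota$ and $\iota'$ restrict to embeddings of the $2$--disk $E_I$ into $W_J$ through a common point, meeting the multisection of $W_J$ in the prescribed way; these are isotopic relative to the point, and, exactly as in Lemma~\ref{lemma:halfdisks}, the isotopy extends to an ambient multisection-preserving isotopy of $W$, so we may assume $\iota=\iota'$ there. For the inductive step, assume $\iota$ and $\iota'$ agree on a neighborhood of $D_I$ in every stratum $W_K$ with $|K|\ge m+1$, and fix $K\supseteq I$ with $|K|=m$. On $W_K$, with its induced multisection, $\iota$ and $\iota'$ are two good-position tubular neighborhoods of the disk $D_K$ which already coincide over the deeper strata meeting $D_K$. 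Since $D_K$ is contractible its normal bundle in $W_K$ is trivial, and — as in the relative uniqueness of tubular neighborhoods — the good-position structures extending the fixed deeper data form a connected space, which one verifies directly from the product-with-a-standard-simplex form of the model $E$ (the relevant diffeomorphism group of the model fiber preserving the standard decomposition and fixing the origin being connected, with $\iota,\iota'$ in its decomposition- and orientation-preserving component). Hence there is an isotopy of $W_K$, relative to the fixed deeper strata, from $\iota|$ to $\iota'|$ through good-position neighborhoods; performing this simultaneously over all $K$ with $|K|=m$ and invoking a stratified, corner-aware isotopy-extension argument — pushing along collars of the $W_K$ inside the adjacent strata, which exist and are essentially unique by the corner structure of Definition~\ref{def:multisection} (cf.\ the smoothing discussion following it) — promotes it to an ambient multisection-preserving isotopy of $W$ fixing the already-treated strata. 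After finitely many steps $m=|I|$: now $\iota=\iota'$ on a neighborhood of $D_I$ in each proper stratum, hence on the part of the tube lying in those strata, and a last application of uniqueness of tubular neighborhoods in $W_I$ relative to that part yields $\iota=\iota'$ outright.

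The main obstacle is the stratified, corner-aware versions of the isotopy-extension and tubular-neighborhood theorems used at every stage: the collars of the $W_K$ and the extensions of the partial isotopies must be chosen compatibly with the local models $\R^{n+2-k}\times\Delta^{k-1}$ so that the multisection is preserved throughout — precisely the care already exercised around Figure~\ref{fig:nondiff2disk} — and here the good-position hypothesis is exactly what makes these local models uniform along $D_I$, so that compatible collars and the connectedness above are available. The one genuinely new input is the connectedness of the diffeomorphism groups arising at each step, together with the absence of higher obstructions coming from the data already fixed on the faces of $D_K$; both are elementary in the situations at hand because everything reduces to a standard decomposition of a simplex times a disk.
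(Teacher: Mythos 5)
Your overall strategy --- a stratum-by-stratum induction from the deepest piece outward, using relative uniqueness of tubular neighborhoods and multisection-preserving isotopy extension, modelled on Lemma~\ref{lemma:halfdisks} --- is exactly the paper's. You also usefully flag the technical issues (corner-compatible collars, stratified isotopy extension, connectedness of the relevant diffeomorphism group) which the published proof leaves implicit. However, as written your induction does not cover the whole tube $N$, and the concluding step does not close the gap.

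The issue is in the indexing. A good-position tube is $N=D\times E$ with $E$ an $(|I|+1)$--disk carrying the standard $|I|$--section, and $N\cap W_K=(D\cap W_{I\cup K})\times E_{I\cap K}$. Your inductive step only treats $K\supseteq I$; for such $K$ one has $E_{I\cap K}=E_I$, so you are only ever matching the \emph{core} part $D_{I\cup K}\times E_I$ of the tube, i.e.\ its trace on the strata containing $W_I$. The tube also sticks out into $W_j$ for $j\in I$, through the pieces $D_B\times E_A$ with $A\subsetneq I$ lying in $W_K$ for $K\not\supseteq I$, and these are never addressed. Your final sentence cannot repair this: a ``tubular neighborhood in $W_I$'' of $D_I$ is the 2--disk bundle $D_I\times E_I$, not the full $(|I|+1)$--disk bundle $N$, so applying uniqueness of tubular neighborhoods \emph{in $W_I$} only re-derives what the core induction already gave. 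What is missing is the ``radial'' stage of the induction: after fixing $(D\cap W_B)\times E_I$ for all $B\supseteq I$, one must enlarge the $E$--fiber stepwise from $E_I$ through $E_J$ ($J\subsetneq I$, $|J|$ decreasing) to all of $E$, at each depth of $D$. This is exactly what the paper's proof does (``We continue with $(D\cap\Sigma)\times E_J$ for $J\subset I$ \dots''), interleaving the $E$--index with the $D$--stratum index. Your argument needs this second axis of induction to actually conclude $\iota=\iota'$ on all of $N$. (Minor: the dimension count $\dim D_J=n+1-|I|-|J|$ is incorrect; since $D_I$ has codimension~$2$ in $W_I$, one has $\dim D_J=n-|J|$ for $J\supseteq I$.)
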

\begin{proof}
We proceed step by step as in Lemma~\ref{lemma:halfdisks}, using the uniqueness of tubular neighborhoods. We start with $(D\cap\Sigma)\times E_I$ and $(D\cap\Sigma)\times E'_I$, which are isotopic relative to $D\cap\Sigma$. We extend to an ambient isotopy of $W$. We continue with $(D\cap\Sigma)\times E_J$ for $J\subset I$ such that $|J|=|I|-1$, which is isotopic to $(D\cap\Sigma)\times E'_J$ relative to their negative boundary. The process goes on until the isotopy identifies $(D\cap\Sigma)\times E$ to $(D\cap\Sigma)\times E'$. Then we proceed similarly to isotope $(D\cap W_J)\times E$ onto $(D\cap W_J)\times E'$ for $J$ such that $|J|=n-1$, and so on.
\end{proof}

\begin{definition}
Let $I$ be a non-empty proper subset of $\{1,\dots,n\}$, $D\subset W_I$ a codimension--$2$ boundary-parallel disk together with a tubular neighborhood $N=D\times E$ in good position. For $j\notin I$, define an \emph{$I$--stabilization} of the multisection by removing $D\times E_i$ from $W_i$ for all $i\in I$ and adding $D\times E$ to $W_j$, then smoothing the corners.
\end{definition}

\begin{lemma}
 For each $j$, the $I$--stabilization of a multisection is again a multisection which is well-defined up to isotopy. Moreover, up to isotopy, it is independent of the choice of $j$.
\end{lemma}
\begin{proof}
In view of Lemmas~\ref{lemma:halfdisks} and~\ref{lem:tubnghd}, the choice of a boundary-parallel disk and its tubular neighborhood in good position is unique up to isotopy. So the only point to check is that different choices of $W_i$ to which we add $D\times E$ result in isotopic multisections. In fact, more generally we can extend the decomposition $\del D=\cup_{j\not \in I} (D\cap W_j)$ to similar decompositions $D=\cup_{j\not \in I} D_j$ into (possibly empty) disks $D_j$ and add $D_j\times E$ to each $W_j$. Using $1$--parameter families of such decompositions we show that the resulting multisections are all isotopic.
\end{proof}

\begin{proposition}
 The $I$--stabilization and the $I^c$--stabilization of a given multisection are isotopic multisections. Furthermore, the genus of $W_J$ is increased by $1$ if $J\supset I$ or $J\supset I^c$, and otherwise remains the same.
\end{proposition}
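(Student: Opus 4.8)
The plan is to identify the $I$--stabilization of $\MM$ with the connected sum $\MM\#\MM_0$, performed at a point of the central surface, where $\MM_0$ is the genus--$1$ $n$--section of $S^{n+1}$ associated by Lemma~\ref{lemma:spheregenusone} to the \emph{unordered} partition $\{I,I^c\}$ of $\{1,\dots,n\}$: the one coming from
\[
 S^{n+1}=\del\bigl(B^{|I|+1}\times B^{|I^c|+1}\bigr)=\bigl(S^{|I|}\times B^{|I^c|+1}\bigr)\cup\bigl(B^{|I|+1}\times S^{|I^c|}\bigr),
\]
with the standard decompositions of $S^{|I|}$ and $S^{|I^c|}$, the $|I|$ pieces of the first factor labelled by $I$ and the $|I^c|$ pieces of the second labelled by $I^c$. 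Granting this identification, both assertions follow easily; the identification is the heart of the matter.

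First I would reduce to a local statement. By construction the $I$--stabilization modifies $\MM$ only within a neighbourhood of the chosen boundary--parallel disk $D\subset W_I$, namely inside its tubular neighbourhood $N=D\times E$ up to the corner--smoothing near $\del N$; and by Lemmas~\ref{lemma:halfdisks} and~\ref{lem:tubnghd} the resulting multisection is independent, up to multisection isotopy, of the choices of $D$ and of $N$. I may therefore place $D$ near a point $p$ of the central surface, inside the standard chart $\R^2\times\R^{n-1}$ of Definition~\ref{def:multisection}, and take $N$ to be a coordinate ball. On the other side, near any point of the central surface of $\MM_0$ the multisection $\MM_0$ has the same standard local model, so $\MM\#\MM_0$ consists precisely of excising a standard ball around $p$ and regluing the complement $S^{n+1}\setminus(\text{standard ball})$ with its $\MM_0$--decomposition. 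Hence it is enough to compare two decompositions of a single ball $B^{n+1}$ carrying the standard decomposition on $\del B^{n+1}$: the one produced by the cut--and--paste recipe inside $N$, and the restriction of $\MM_0$ to the complement of a ball.

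This comparison is the main obstacle. One must match the combinatorial model $N=D\times E$ --- with $D$ a disk whose boundary sphere is cut into $|I^c|$ faces indexed by $I^c$ as in $\del\Delta^{|I^c|}$, and $E$ a disk carrying the standard $|I|$--section indexed by $I$ --- with the product picture of $\bigl(S^{|I|}\times B^{|I^c|+1}\bigr)\cup\bigl(B^{|I|+1}\times S^{|I^c|}\bigr)$ near a point of its central surface, and verify that ``deleting $D\times E_i$ from $W_i$ for each $i\in I$ and adding $D\times E$ to a single $W_{i_0}$ with $i_0\notin I$'' realizes exactly the local effect of that connected sum. Once the two local models are identified, what remains is bookkeeping with faces of simplices and of the standard sphere decomposition, together with the already-established independence of the choice of $i_0$.

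Granting the identification, the first assertion is immediate: the partition $\{I,I^c\}$ is unordered, so $\MM_0$ is literally the same $n$--section whether obtained from $I$ or from $I^c$ (swapping the two factors of $B^{|I|+1}\times B^{|I^c|+1}$ is a diffeomorphism of $S^{n+1}$ preserving $\MM_0$ and exchanging the two labellings), whence the $I$-- and $I^c$--stabilizations both equal $\MM\#\MM_0$ and are isotopic. For the second assertion, since the connected sum is performed near a point of the central surface, for each $J$ the piece $(\MM\#\MM_0)_J$ is the (boundary) connected sum of $W_J$ with $(\MM_0)_J$, so $\mathrm{genus}\bigl((\MM\#\MM_0)_J\bigr)=\mathrm{genus}(W_J)+\mathrm{genus}\bigl((\MM_0)_J\bigr)$. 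Finally one computes the genera inside $\MM_0$: writing $J_1=J\cap I$ and $J_2=J\cap I^c$, on the gluing region $(\MM_0)_J$ is the product of $\bigcap_{i\in J_1}B_i$ and $\bigcap_{i\in J_2}B'_i$, each factor being a ball unless its index set is all of $I$ (resp. all of $I^c$), in which case it is a circle; hence $(\MM_0)_J$ has genus $1$ exactly when $J\supseteq I$ or $J\supseteq I^c$, and genus $0$ otherwise. Combining, $\mathrm{genus}(W_J)$ increases by $1$ precisely when $J\supseteq I$ or $J\supseteq I^c$ and is unchanged otherwise.
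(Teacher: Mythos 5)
Your route is genuinely different from the paper's. The paper argues directly: it takes a standard half-disk $\Delta\subset W_I$ for $D$, thickens $D\times E$ to a neighbourhood $N(\Delta)$ of $\Delta$ that is itself a standard half-disk in $\cup_{i\in I}W_i$, observes that adding all of $N(\Delta)$ to $W_{i_0}$ is a multisection isotopy, and notices that after this isotopy the $I$--stabilization is exactly an $I^c$--stabilization; the genus count then follows from noting that removing a neighbourhood of a boundary-parallel codimension--$2$ disk from $W_J$ (the case $J\supset I$ or $J\supset I^c$) attaches a $1$--handle, while in all other cases one is only pushing along the boundary. You instead route through the identification with the connected sum $\MM\#\MM_0$, where $\MM_0$ is the genus--$1$ multisection of $S^{n+1}$ from Lemma~\ref{lemma:spheregenusone} with $k=|I|$. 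That identification is indeed what the paper proves (tersely) in the very next lemma, so logically the order could be reversed; and once it is granted, both your deductions --- the $I\leftrightarrow I^c$ symmetry via the factor swap of $B^{|I|+1}\times B^{|I^c|+1}$, and the genus count via $(\MM_0)_J\cong\bigl(\bigcap_{i\in J\cap I}B_i\bigr)\times\bigl(\bigcap_{i\in J\cap I^c}B'_i\bigr)$ --- are correct.

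The problem is that you never actually carry out the identification you declare to be ``the heart of the matter''. You reduce correctly to matching two decompositions of a ball (the cut-and-paste model $N=D\times E$ versus the restriction of $\MM_0$ to the complement of a standard ball), but then you explicitly defer the verification: ``This comparison is the main obstacle \dots Once the two local models are identified, what remains is bookkeeping.'' As written, that is a gap, not a proof. To close it, you would have to exhibit a concrete diffeomorphism of the local model $\R^2\times\R^{n-1}$ (or of a ball therein) taking the modified decomposition obtained by the $I$--stabilization recipe to the decomposition induced by $\bigl(S^{|I|}\times B^{|I^c|+1}\bigr)\cup\bigl(B^{|I|+1}\times S^{|I^c|}\bigr)$ minus a standard ball, and check that the corner structure is preserved face by face. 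Alternatively, you could first prove the connected-sum lemma independently (as the paper does, after the proposition) and then cite it here; but either way the matching of local models must actually be done somewhere, and currently your argument assumes it. Everything downstream of the identification is sound.
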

\begin{proof}
 Assume the $I$--stabilization adds a neighborhood $D\times E$ of a boundary-parallel disk $D$ to $W_{i_0}$ with $i_0\notin I$. Let $\Delta\subset W_I$ be a standard half-disk for $D$. The neighborhood $D\times E$ can be extended to a neighborhood $N(\Delta)$ which is a standard half-disk in $\cup_{i\in I}W_i$. Performing the $I$--stabilization amounts to first adding $N(\Delta)$ to $W_{i_0}$ and then performing an $I^c$--stabilization.

 If $I\subset J$, then $W_J\subset W_I$ and a neighborhood of a boundary-parallel codimension--$2$ disk is carved off $W_J$, which is equivalent to adding a $1$--handle. The stabilization can be equivalently performed on $W_{I^c}$, so that the effect is the same if $I^c\subset J$. In the remaining cases, the move acts by digging along the boundary, a process that can be realized progressively by an isotopy, so the genus of $W_J$ is unchanged.
\end{proof}

\begin{corollary}\label{cor:inducedstab}
 The $I$--stabilization has the following effect on the induced multisection of $\partial W_J$:
 \begin{itemize}
  \item a connected sum with the genus--$1$ multisection of $S^1\times S^{n-|J|}$ if $J\supset I$ or $J\supset I^c$,
  \item an $(I\cup J)$--stabilization otherwise.
 \end{itemize}
\end{corollary}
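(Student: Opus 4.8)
The plan is to reduce the statement to an explicit local computation, organized through diagrams. Since an $I$--stabilization is supported inside the good-position tubular neighbourhood $N=D\times E$ of the boundary-parallel disk $D\subset W_I$, I would first use Lemmas~\ref{lemma:halfdisks} and~\ref{lem:tubnghd} to isotope $D$, together with $N$, into a single chart $U\subset W$ carrying the standard multisection of $\R^2\times\Delta^{n-1}$ (Definition~\ref{def:simplex} and Example~\ref{ex:sphere}) and centred at a point of the central surface $\Sigma=W_{1,\dots,n}$. This is legitimate: $\Sigma\subset\partial W_K$ for every proper $K$, so $U$ also meets each $\partial W_K$ in a standard chart of its induced multisection, and nothing is changed away from $U$. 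I would also record the elementary observation that the induced $(n-|J|)$--section of $\partial W_J$ admits as a diagram exactly the subcollection $(\Sigma;\{\alpha^k\}_{k\notin J})$ of any diagram $(\Sigma;\alpha^1,\dots,\alpha^n)$ of $(W,\MM)$, its $k$--th three-dimensional handlebody being again $\cap_{l\neq k}W_l$; in particular the genus of the central surface of every induced multisection equals $g$.

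The crux is to compute, in the local model, the effect of the $I$--stabilization on the diagram of $(W,\MM)$. I expect the familiar ``stabilization picture'': $\Sigma$ gains a genus--$1$ handle and each family $\alpha^j$ gains one new curve $\beta^j$, where $\{\beta^j\}_{j\notin I}$ is a parallel family of meridians of the new handle and $\{\beta^j\}_{j\in I}$ is a parallel family each meeting those meridians once --- the alternative $j\notin I$ versus $j\in I$ reflecting that $W_I\subset\cap_{l\neq j}W_l$ precisely when $j\notin I$. This has to be extracted from the cut-and-paste definition: one writes $E=D^2\times\Delta^{|I|-1}$ with its standard decomposition, matches the faces $D\cap W_j$ ($j\notin I$) with the stratification of $\partial W_I$, and follows the new central surface $\cap_i W_i^{\mathrm{new}}$ and its cut systems through the removal of the $D\times E_i$ and the addition of $D\times E$. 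This is where the work lies and is the step I expect to be the main obstacle.

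Granting this, the conclusion is formal. Restricting to $\partial W_J$ through the first paragraph, its diagram acquires the curves $\{\beta^k\}_{k\notin J}$, of which those with $k\in I\setminus J$ are of the meeting-once type and those with $k\in\{1,\dots,n\}\setminus(I\cup J)$ are parallel meridians. If $I\subseteq J$, then $I\setminus J=\emptyset$, all the new curves are mutually parallel, and the diagram of $\partial W_J$ changes by adding $n-|J|$ parallel curves on a new genus--$1$ handle; since this is exactly the diagram of the genus--$1$ multisection of $S^1\times S^{n-|J|}$ (Lemma~\ref{lemma:S1timesSn}), the move is a connected sum with that multisection near the chosen point of $\Sigma$ --- one may either quote the uniqueness in Theorem~\ref{thm:diagramVSmultisection} or read this off directly from the local model. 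The case $I^c\subseteq J$ is identical, the $I$-- and $I^c$--stabilizations being equal (the preceding Proposition). In the remaining case both $I\setminus J$ and $\{1,\dots,n\}\setminus(I\cup J)$ are non-empty, and the recorded change --- a new genus--$1$ handle carrying parallel meridians in the families indexed by $\{1,\dots,n\}\setminus(I\cup J)$ and a parallel dual family in the families indexed by $I\setminus J$ --- is precisely the diagrammatic effect of an $(I\setminus J)$--stabilization of the induced multisection of $\partial W_J$, i.e.\ of a stabilization on $W_{I\cup J}$; here $I\setminus J$ is a non-empty proper subset of $\{1,\dots,n\}\setminus J$ exactly because $I\not\subseteq J$ and $I^c\not\subseteq J$, so this stabilization is defined.

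In summary, the genus statement of the preceding Proposition already settles when $\partial W_J$ changes as a manifold; the content of the corollary is to identify the resulting multisection, which is entirely encoded in the new curves, the sole laborious point being the local verification of the meridian/dual pattern in the second paragraph.
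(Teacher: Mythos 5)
The overall plan is reasonable: read the induced multisection of $\partial W_J$ off the diagram, use that the diagram of the induced $(n-|J|)$--section is the subcollection $(\Sigma;\{\alpha^k\}_{k\notin J})$ (which you correctly identify), and compute the diagram change produced by an $I$--stabilization in a local model. Your reduction in the first paragraph (Lemmas~\ref{lemma:halfdisks} and~\ref{lem:tubnghd}) and the case analysis in the third paragraph are both sound, and the formula $I\cup J = J\cup(I\setminus J)$ correctly identifies the piece on which the residual stabilization lives. The paper states this as an unproved corollary of the preceding Proposition, and the intended route is presumably to track the effect of removing $D\times E_i$ and adding $D\times E$ directly on the pieces $W_K$ with $K\supset J$, as in the Proposition's proof; your diagrammatic route is a legitimate alternative, with the advantage of making the statement checkable on the central surface alone.

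The gap, as you say yourself, is exactly the local computation in your second paragraph, and you have not supplied it; moreover your stated expectation there is backwards. In the $I$--stabilization one removes the tube $D\times E_I$ (a $D^2$--bundle over $D$) from $W_K$ for every $K\supset I$ — in particular from $W_I$ itself, whose genus goes up — while the piece $D\times E$ is added on the $I^c$ side. Consequently, the small circle $\{pt\}\times\partial E_I$ around the tube bounds its meridian disk $\{pt\}\times E_I$ in $H_k=\cap_{l\neq k}W_l$ precisely when the tube has been \emph{removed} from $H_k$'s side and handed to $H_k$, which happens for $k\in I$, not $k\notin I$; the new curve in $\alpha^k$ for $k\notin I$ is the \emph{dual} curve, bounding the remnant of the half-disk $\Delta$ after the tube is deleted. (Check this for $n=2$, $I=\{1\}$: the meridian bounds in $W_2^{\mathrm{new}}=W_2\cup (D\times E)=H_1^{\mathrm{new}}$, so it lands in $\alpha^1$, and $1\in I$.) This does not affect the logic of your third paragraph because the two groups play symmetric roles there, but it does show the key step has not actually been carried out. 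You should also be slightly careful with the parenthetical option of quoting the uniqueness of Theorem~\ref{thm:diagramVSmultisection}: in the smooth category it only applies for $n-|J|\le 5$, so for high $|J^c|$ the ``read it off the local model'' option is the one that must be used.
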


\begin{lemma}
 The $I$--stabilization of a multisection $\mathcal M$ can be obtained by a suitable connected sum with a genus--$1$ multisection of $S^{n+1}$.
\end{lemma}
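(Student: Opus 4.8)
The plan is to show that an $I$--stabilization changes the multisection only inside an arbitrarily small standardly--multisected ball, and then to recognise the resulting local modification as a connected--sum summand.

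First I would choose the defining data of the $I$--stabilization conveniently. Fix an embedded arc $\gamma$ in the interior of the central surface $\Sigma=W_{1,\dots,n}$. Along $\gamma$ the multisection is everywhere modelled on $\R^2\times\Delta^{n-1}$ (Definition~\ref{def:multisection}), and, $\gamma$ being connected, these local charts patch together; hence a small tubular neighborhood $B$ of $\gamma$ in $W$ is a ball carrying the standard genus--$0$ $n$--section of Example~\ref{ex:sphere}, with $B\cap W_i$ the standard sectors. Inside $B$ one then realises an $\MM$--standard half-disk $\Delta_I\subset W_I$ with $\Delta_I\cap\Sigma=\gamma$, a codimension--$2$ boundary-parallel disk $D=\del_+\Delta_I$, and a tubular neighborhood $N=D\times E$ in good position, all contained in $B$ (these are immediate in the product local model). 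By Lemmas~\ref{lemma:halfdisks} and~\ref{lem:tubnghd}, this choice computes the $I$--stabilization up to isotopy, so we may use it.

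The $I$--stabilization modifies the pieces $W_i$ only within $N$, hence --- allowing a slightly larger neighborhood for the corner smoothing --- only within $B$; in particular the induced decomposition of $\del B\cong S^n$ is untouched and is the standard one. Writing $W=(W\setminus\mathring B)\cup_{\del B}B$, the $I$--stabilized multisection is therefore $(W\setminus\mathring B)\cup_{\del B}B'$, where $B'$ denotes $B$ with the same cut-and-paste performed in its interior. Now set $X:=B'\cup_{\del B}B''$, where $B''$ is a second standardly--multisected ball glued to $B'$ along $\del B'=\del B$. Since gluing two standardly--multisected balls along their boundaries yields the standard genus--$0$ $n$--section of $S^{n+1}$, the multisected manifold $X$ is exactly that $n$--section with an $I$--stabilization carried out in one of the two balls. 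As stabilization leaves the underlying manifold unchanged, $X$ is a multisection of $S^{n+1}$; by the preceding proposition its central surface and the pieces $W_J$ with $J\supset I$ or $J\supset I^c$ become of genus $1$ while all other pieces stay of genus $0$, so $X$ is a genus--$1$ $n$--section of $S^{n+1}$, namely the one of Lemma~\ref{lemma:spheregenusone} with parameter $k=|I|$. Finally, removing the ball $B''\cong B$ from $X$ gives back $B'$, so the connected sum $\MM\#X=(W\setminus\mathring B)\cup_{\del B}B'$ is precisely the $I$--stabilization of $\MM$, as claimed.

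The main obstacle is the first step: one must verify carefully that a neighborhood of an arc in $\Sigma$ genuinely carries the standard ball multisection --- the local models of Definition~\ref{def:multisection}, together with their indexing bijections, have to be shown to glue compatibly along $\gamma$ --- and that the defining data of the $I$--stabilization, a priori a codimension--$2$ disk spread through all of $W_I$, can be confined to such a ball; this last point is exactly the purpose of Lemma~\ref{lemma:halfdisks}. The remaining steps are bookkeeping about connected sums and corner smoothing.
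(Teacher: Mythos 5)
Your proof is correct and takes essentially the same route as the paper, which simply asserts (in one line) that one should use the genus--$1$ multisection of $S^{n+1}$ from Lemma~\ref{lemma:spheregenusone} with $k=|I|$ and leaves the justification implicit. You have supplied exactly the verification that assertion requires: localizing the stabilization data to a standardly--multisected ball via Lemmas~\ref{lemma:halfdisks} and~\ref{lem:tubnghd}, capping off with a second standard ball to recognize the resulting genus--$1$ multisection of $S^{n+1}$ (matching the genus distribution from the preceding proposition to the parameter $k=|I|$), and then reinterpreting the local cut-and-paste as a connected sum; the caveat you flag about patching the local charts along the arc $\gamma$ is the right thing to be careful about and is handled by the connectedness of $\gamma$ together with uniqueness of tubular neighborhoods.
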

\begin{proof}
 The $I$--stabilization involves a codimension--$2$ boundary-parallel disk $D$ in $W_I$; it is performed in a regular neighborhood $B$ of a half-disk $\Delta$ such that $\partial_+\Delta=D$. Cutting along the boundary of $B$ realizes $\mathcal M$ as a connected sum of $\mathcal M$ with the standard genus--$0$ multisection of $S^{n+1}$. After stabilization, it becomes a connected sum of $\mathcal M$ with a genus--$1$ multisection of $S^{n+1}$.
\end{proof}

Looking at the proof of Lemma~\ref{lemma:spheregenusone}, we see that the genus--$1$ multisection of $S^{n+1}$ that appears in the lemma above is the one with $k=|I|$ in Lemma~\ref{lemma:spheregenusone}.

\section{Four-manifold quadrisections}
\label{sec:4Dquadri}

In \cite{IN}, Islambouli and Naylor introduced a generalization of trisections for $4$--manifolds: they define a multisection of a $4$--manifold $X$ as a decomposition $X=\cup_{1\leq i\leq n}X_i$ where
\begin{itemize}
 \item each $X_i$ is a $4$--dimensional handlebody,
 \item $\Sigma=\cap_{1\leq i\leq n}X_i$ is a closed surface,
 \item $H_i=X_i\cap X_{i+1}$ is a $3$--dimensional handlebody for each $i$, where $X_{n+1}=X_1$,
 \item $X_i\cap X_j=\Sigma$ otherwise.
\end{itemize}
With such a multisection is associated a {\em multisection diagram} defined by the central surface $\Sigma$ with a cut-system for each $3$--dimensional handlebody $H_i$. The diagram determines the associated manifold up to diffeomorphism. Note that, in contrast with $n$--section diagrams of $(n+1)$--manifolds, the cyclic order of the cut-systems in a multisection diagram of a $4$--manifold is relevant.

\begin{example} \label{ex:quad4D}
 In Figure~\ref{fig:gen1quad}, the leftmost diagram represents $S^1\times S^3$ and the middle diagram represents $S^4$. The rightmost diagram represents $S^4$ if the red and blue curves correspond to consecutive $H_i$'s and $S^2\times S^2$ otherwise. \\
 The genus--$1$ multisection diagrams of $S^4$ are the diagrams with two non-empty groups of parallel curves, where two curves from distinct groups meet at exactly one point and transversely, and the curves of a given group correspond to consecutive $H_i$'s.
\end{example}

Here, we are specifically interested in $4$--manifold quadrisections, as they appear in $5$--manifold quadrisections. Actually, a quadrisection of a $5$--manifold $W$ induces a quadrisection of the $4$--dimensional ``middle level'' $\partial(W_1\cup W_2)$, with the same quadrisection diagram.

In \cite{IN}, stabilisation moves are defined for $4$--manifold multisections, which amount to connect summing with the genus--$1$ multisections of $S^4$. They also introduce the so-called UPW move, which modifies the number of pieces in the multisection. They prove that any two multisections of the same $4$--manifold are related by a sequence of stabilization moves and UPW moves. Here, we also consider a {\em fake stabilization} move defined as the connected sum with the genus--$1$ multisected $S^2\times S^2$ represented by the middle diagram in Figure \ref{figQuadrisections}.

\begin{figure}[htb]
\begin{center}
\begin{tikzpicture}
\begin{scope} [scale=0.6]
 \draw (0,0) circle (2);
 \draw[purple] (-2,0) -- (0,0);
 \draw[blue] (0,0) -- (2,0);
 \draw[vert] (0,-2) -- (0,0);
 \draw[orange] (0,0) -- (0,2);
 \draw (1.8,-1.8) node {$\scriptstyle{W_{23}}$} (1.8,1.8) node {$\scriptstyle{W_{13}}$} (-1.8,-1.8) node {$\scriptstyle{W_{24}}$} (-1.8,1.8) node {$\scriptstyle{W_{14}}$};
\end{scope}
\begin{scope} [xshift=2.2cm,scale=0.8]
 \draw (0,0) .. controls +(0,1) and +(-1,0) .. (3,1.5) .. controls +(1,0) and +(0,1) .. (6,0);
 \draw (0,0) .. controls +(0,-1) and +(-1,0) .. (3,-1.5) .. controls +(1,0) and +(0,-1) .. (6,0);
 \draw (2,0) ..controls +(0.5,-0.25) and +(-0.5,-0.25) .. (4,0);
 \draw (2.3,-0.1) ..controls +(0.6,0.2) and +(-0.6,0.2) .. (3.7,-0.1);
 \draw[blue] (3.2,-0.2) ..controls +(0.2,-0.5) and +(0.2,0.5) .. (3.2,-1.5);
 \draw[dashed,blue] (3.2,-0.2) ..controls +(-0.2,-0.5) and +(-0.2,0.5) .. (3.2,-1.5);
 \draw[purple] (3,-0.2) ..controls +(0.2,-0.5) and +(0.2,0.5) .. (3,-1.5);
 \draw[dashed,purple] (3,-0.2) ..controls +(-0.2,-0.5) and +(-0.2,0.5) .. (3,-1.5);
 \draw[vert] (3,0)ellipse(1.6 and 0.8);
 \draw[orange] (3,0)ellipse(1.8 and 1);
\end{scope}
\begin{scope} [xshift=7.5cm,scale=0.45]
 \draw[rounded corners=50pt] (0.5,0) -- (0.5,-5) -- (15.5,-5) -- (15.5,5) -- (0.5,5) -- (0.5,0);
 \foreach \x in {4,8,12} {
 \draw (\x+0.5,2) .. controls +(-0.5,-0.6) and +(0,1) .. (\x-0.5,0) .. controls +(0,-1) and +(-0.5,0.6) .. (\x+0.5,-2);
 \draw (\x,1.4) .. controls +(0.5,-0.6) and +(0,0.3) .. (\x+0.5,0) .. controls +(0,-0.3) and +(0.5,0.6) .. (\x,-1.4);}
 \foreach \x/\c in {0/vert,4/orange,8/purple,12/blue} {
 \draw[color=\c] (\x+0.5,0) .. controls +(1,-0.5) and +(-1,-0.5) .. (\x+3.5,0);
 \draw[color=\c,dashed] (\x+0.5,0) .. controls +(1,0.5) and +(-1,0.5) .. (\x+3.5,0);}
 \foreach \x/\c in {4/blue,8/vert,12/orange} {
 \draw[color=\c] (\x,0) ellipse (1.5 and 3);}
 \foreach \x/\c in {4/purple,8/blue,12/vert} {
 \draw[color=\c] (\x,0) ellipse (1.8 and 3.4);}
 \foreach \x/\c in {0/orange,4/purple} {
 \draw[rounded corners=40pt,color=\c] (\x+2,0) -- (\x+2,-4) -- (\x+10,-4) -- (\x+10,4) -- (\x+2,4) -- (\x+2,0);}
\end{scope}
\end{tikzpicture}
\caption{Quadrisection diagrams for $S^2\times S^2$} \label{figQuadrisections}
\end{center}
\end{figure}
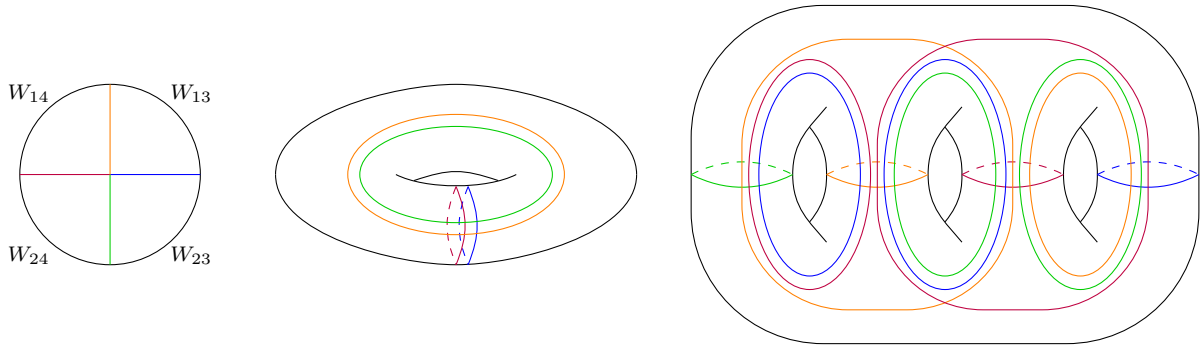

\begin{proposition}
 The effect of an $I$--stabilization of a quadrisected $5$--manifold $W$ on the induced quadrisection of $\partial(W_1\cup W_2)$ is a fake stabilization when $I=\{12\},\{34\}$ and a stabilization otherwise.
\end{proposition}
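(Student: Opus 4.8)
The plan is to reduce the statement to a connected-sum computation with the standard genus--$1$ quadrisections of $S^5$ and then to read off the resulting middle levels. First I would recall two facts from Section~\ref{sec:stabilizations}: an $I$--stabilization is isotopic to the $I^c$--stabilization, so we may assume $|I|\in\{1,2\}$; and an $I$--stabilization of $W$ is a connected sum $W\#S^5$, supported in a small neighbourhood of a point $p$ of the central surface $\Sigma$, where $S^5$ carries the genus--$1$ quadrisection $\MM_k$ of $S^5$ provided by Lemma~\ref{lemma:spheregenusone} for the value $k=|I|$. The labelling of the four pieces of $\MM_k$ is pinned down by the genus changes recorded in the Proposition preceding Corollary~\ref{cor:inducedstab}: the piece $(S^5)_J$ has genus~$1$ precisely when $J\supset I$ or $J\supset I^c$, and genus~$0$ otherwise. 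Writing $S^5=(S^k\times B^{5-k})\cup(B^{k+1}\times S^{4-k})$ with the standard decompositions of the two spheres, this forces, for $k=2$, the two pieces coming from $S^2\times B^3$ to be exactly $\{(S^5)_i : i\in I\}$ (those coming from $B^3\times S^2$ being $\{(S^5)_i : i\in I^c\}$), and for $k=1$ it forces the unique genus--$1$ piece to be $(S^5)_i$ where $I=\{i\}$.

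Next I would pass to the middle level. Since the connected sum is local near $p\in\Sigma$ and $\Sigma\subset\partial(W_1\cup W_2)$, I would choose the pushed-in representative of the middle level so that near $p$ it is in the standard position --- the position in which the middle level of the genus--$0$ multisection of $S^5$ of Example~\ref{ex:sphere} meets its centre, namely as the genus--$0$ quadrisected $S^4$ --- and perform the connected sum along a multisection-standard ball adapted to this hypersurface. Then removing such a ball from $W$ and gluing back $S^5$ minus such a ball removes a standard $4$--ball from $\partial(W_1\cup W_2)$ and inserts $\partial((S^5)_1\cup(S^5)_2)$ minus a standard $4$--ball, matching all the pieces $W_J$ and hence the induced Islambouli--Naylor quadrisections. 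So the effect on the induced quadrisection of $\partial(W_1\cup W_2)$ is the connected sum with the induced quadrisection of $Y_0:=\partial((S^5)_1\cup(S^5)_2)$, in a manner parallel to Corollary~\ref{cor:inducedstab} (which treats $\partial W_J$ in place of $\partial(W_1\cup W_2)$), and it only remains to identify $Y_0$ with its quadrisection.

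If $I=\{1,2\}$ or $I=\{3,4\}$ then $k=2$ and, by the labelling above, $(S^5)_1\cup(S^5)_2$ is a whole factor $S^2\times B^3$, so $Y_0=\partial(S^2\times B^3)=S^2\times S^2$; reading off the induced cut systems as in the discussion of the quadrisection of $S^2\times S^3$ around Figure~\ref{fig:S2xS3}, one finds that its quadrisection is the genus--$1$ quadrisection of $S^2\times S^2$ whose two parallel pairs of diagram curves \emph{alternate} around the cycle of pieces, that is, the one of the middle diagram of Figure~\ref{figQuadrisections}; hence the induced move is a fake stabilization. In every other case, the two pieces $(S^5)_1,(S^5)_2$ are either both $5$--balls glued along the genus--$0$ piece $(S^5)_1\cap(S^5)_2\cong D^4$, or one of them is the genus--$1$ handlebody $S^1\times B^4$ and the other a $5$--ball, glued along the genus--$1$ piece $(S^5)_1\cap(S^5)_2\cong S^1\times D^3$ (where the attached $5$--ball plays the role of a canceling $2$--handle); in both subcases $(S^5)_1\cup(S^5)_2$ is a $5$--ball, so $Y_0=S^4$, and reading off the cut systems shows the induced quadrisection on $Y_0=S^4$ to have a genus--$1$ diagram with two non-empty groups of parallel curves, distinct groups meeting once and the curves of each group \emph{consecutive}, i.e.\ a genus--$1$ quadrisection of $S^4$ as in Example~\ref{ex:quad4D}; hence the induced move is an honest stabilization. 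This exhausts the possibilities: if $|I|=1$ then $|I^c|=3$, so the size--$2$ subset $\{1,2\}$ equals neither $I$ nor $I^c$ and we are always in the $S^4$ case.

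I expect the main obstacle to be the reduction to the middle level: checking that a connected sum of multisected $5$--manifolds near $\Sigma$ restricts, on $\partial(W_1\cup W_2)$, to a connected sum of the induced Islambouli--Naylor quadrisections near a standard ball. The efficient route is to arrange the pushed-in middle hypersurface and the connect-sum ball simultaneously in standard position inside the common local model $\R^2\times\R^3$ equipped with the standard decomposition of $\R^3$, after which the matching of pieces under the gluing is automatic. The only other slightly delicate point, distinguishing the two outcomes, then becomes the finite check of whether the two parallel pairs of diagram curves of $Y_0$ alternate or are consecutive around the cycle of $4$--dimensional pieces, the former producing $S^2\times S^2$ and the latter $S^4$.
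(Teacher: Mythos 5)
Your argument is correct and follows the same route as the paper's proof, which is essentially a one-line citation: the $I$--stabilization is a connected sum with the genus--$1$ quadrisection of $S^5$ from Lemma~\ref{lemma:spheregenusone} (with $k=|I|$), and one reads off the induced Islambouli--Naylor quadrisection of the middle level from the diagrams in Figure~\ref{fig:gen1quad} and Example~\ref{ex:quad4D}. You have supplied the details the paper leaves implicit — pinning down the labelling of the pieces via the genus count, checking that the local connected sum restricts to a connected sum of the induced quadrisections on $\partial(W_1\cup W_2)$, and identifying $\partial((S^5)_1\cup(S^5)_2)$ together with its quadrisection in each case — but the underlying idea and the case split (alternating versus consecutive pairs of curves, i.e.\ $S^2\times S^2$ versus $S^4$) coincide with the paper's.
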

\begin{proof}
 A $5$--dimensional stabilization can be performed by a connected sum with a genus--$1$ quadrisection of $S^5$, whose diagram is given in Figure~\ref{fig:gen1quad}.
\end{proof}

The next result shows that two multisections of the same $4$--manifold with the same number of pieces are not always related by stabilizations only, hence it can be indeed necessary to modify the number of pieces. This remains true when allowing fake stabilizations.

\begin{proposition}
 The manifold $S^2\times S^2$ admits two distinct quadrisections which are not related by a sequence of (fake) stabilizations.
\end{proposition}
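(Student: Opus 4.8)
The plan is to produce two explicit quadrisections of $S^2\times S^2$ — one coming from the general construction for surface bundles (equivalently, the right-hand diagram in Figure~\ref{figQuadrisections}, which is the same as the diagram of Figure~\ref{fig:S2xS3} applied with $S^2$ in place of $S^3$) and one coming from a ``linear'' quadrisection, e.g.\ the genus--$1$ quadrisection given by the rightmost diagram of Figure~\ref{fig:gen1quad} with the red and blue curves assigned to \emph{non-consecutive} $H_i$'s (so that, by Example~\ref{ex:quad4D}, it indeed represents $S^2\times S^2$ rather than $S^4$). Call these quadrisections $\MM_0$ and $\MM_1$. The first has $5$--dimensional\,/\,$4$--dimensional pieces, but viewed as a $4$--manifold quadrisection in the sense of Islambouli--Naylor it has genus $3$ with $3$--dimensional pieces $H_i$ of genus $1$; the second has genus $1$ with $H_i$'s of genus $1$ as well, but a different cyclic pattern. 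The key point is that these two quadrisections are distinguished by an invariant that is unchanged by (fake) stabilizations.

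\textbf{First} I would pin down such an invariant. A natural candidate is the multiset $\{\mathrm{genus}(H_1),\dots,\mathrm{genus}(H_n)\}$ of genera of the $3$--dimensional pieces, \emph{reduced modulo the effect of a stabilization and a fake stabilization}. A stabilization with diagram the genus--$1$ $S^4$ (second diagram of Figure~\ref{fig:gen1quad}) raises the genus of two consecutive $H_i$'s by $1$ and leaves the others fixed; a fake stabilization (connected sum with the genus--$1$ $S^2\times S^2$, middle diagram of Figure~\ref{figQuadrisections}) raises the genus of all four $H_i$'s by $1$. So the relevant invariant is the image of the vector $(g_1,g_2,g_3,g_4)\in\Z^4$ in the quotient $\Z^4/L$, where $L$ is the sublattice generated by $(1,1,1,1)$ together with the four ``consecutive-pair'' vectors $(1,1,0,0),(0,1,1,0),(0,0,1,1),(1,0,0,1)$; one computes that $L$ is the index--$2$ sublattice of vectors with even coordinate sum, so the invariant lives in $\Z^4/L\cong\Z/2$ and records the parity of $g_1+g_2+g_3+g_4$. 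For $\MM_0$ the four $H_i$ have genus $1$ each, so the parity is $0$; for $\MM_1$ one checks directly from the diagram that the four $H_i$ have genera summing to an odd number (in the normalization above, to $1$ after pushing to the minimal-genus representative, or more robustly one computes it for the explicit diagram), so the parity is $1$. Hence $\MM_0$ and $\MM_1$ are not related by any sequence of stabilizations and fake stabilizations.

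\textbf{The main obstacle} will be checking that the parity of $\sum_i \mathrm{genus}(H_i)$ really is invariant under \emph{all} (fake) stabilizations, and that it genuinely distinguishes the two chosen quadrisections — in particular, making sure that in the linear quadrisection the non-consecutive assignment forces an odd total genus while the bundle quadrisection has an even one, and that no stabilization can bridge the two. Here I would argue as follows: a (fake) stabilization is, by the lemmas of Section~\ref{sec:stabilizations}, a connected sum with a fixed genus--$1$ multisected model, and taking connected sums of $4$--manifold quadrisections along a point of the central surface simply adds the corresponding genus vectors; since each model's genus vector lies in $L$, the class in $\Z^4/L$ is unchanged. The only subtlety is the bookkeeping of the cyclic labelling of the $H_i$ under connected sum, but the $(1,1,1,1)$ and consecutive-pair vectors are invariant under the cyclic relabelling $i\mapsto i+1$, so the argument is robust. \textbf{Finally} I would remark that this also shows the UPW move (which changes the number of pieces) is genuinely needed to pass between the two — consistent with the surrounding discussion — so the statement follows.
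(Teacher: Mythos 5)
Your proposed invariant is identically zero, for a structural reason: in an Islambouli--Naylor multisection of a $4$--manifold, every $3$--dimensional piece $H_i=X_i\cap X_{i+1}$ has boundary equal to the central surface $\Sigma$ (indeed $\partial X_i=H_{i-1}\cup_\Sigma H_i$ is a closed $3$--manifold, so $\partial H_{i-1}=\partial H_i=\Sigma$), hence all the $H_i$ have the same genus, namely the genus $g$ of $\Sigma$. The vector $(g_1,\dots,g_4)$ is therefore always $(g,g,g,g)$ and its class in any quotient that kills $(1,1,1,1)$ vanishes. In particular your claim that a stabilization ``raises the genus of two consecutive $H_i$'s by $1$ and leaves the others fixed'' is incorrect: every (fake) stabilization raises $g$, and hence all four genera, by $1$. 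As a side issue, your lattice computation is also wrong: $(1,0,0,1)=(1,1,0,0)-(0,1,1,0)+(0,0,1,1)$ and $(1,1,1,1)=(1,1,0,0)+(0,0,1,1)$, so $L$ has rank $3$ and $\Z^4/L\cong\Z$, detected by $g_1-g_2+g_3-g_4$ rather than the parity of the sum; but this also vanishes on $(g,g,g,g)$. So the proof as written has a genuine gap. A variant based on the genera $k_i$ of the $4$--dimensional pieces $X_i$, which genuinely do vary, might be salvageable, but the stabilization bookkeeping would have to be redone from scratch.

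The paper argues quite differently and does not look for an intrinsic numerical invariant of the $4$--manifold quadrisection. Both diagrams of Figure~\ref{figQuadrisections} are abstract $4$--section diagrams in the sense of Definition~\ref{def:diagram}, so by Theorem~\ref{thm:diagramVSmultisection} each determines a unique $5$--manifold: the middle diagram gives $S^5$ and the rightmost one gives $S^2\times S^3$. A (fake) stabilization is, at the level of diagrams, a connected sum with a genus--$1$ diagram which, read as a $5$--manifold quadrisection diagram, again represents $S^5$; so the diffeomorphism type of the associated $5$--manifold is unchanged by (fake) stabilizations. Since $S^5\not\cong S^2\times S^3$, the two quadrisections are not related by such moves.
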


\begin{proof}
 Figure \ref{figQuadrisections} represents two quadrisection diagrams of the $4$--manifold $S^2\times S^2$. Viewed as diagrams of $5$--manifold quadrisections, the middle diagram represents $S^5$ and the rightmost one represents $S^2\times S^3$. Since (fake) stabilization moves on the $4$--dimensional middle level are induced by $5$--dimensional stabilizations, and since $S^5$ is not diffeomorphic to $S^2\times S^3$, these two quadrisections of $S^2\times S^2$ are not related by (fake) stabilizations.
\end{proof}

\section{Existence in dimension five}
\label{sec:5Dexistence}

The existence of trisections for $4$--dimensional smooth manifolds was proved by Gay and Kirby in \cite{GayKirby}. The existence proof we give here in dimension $5$ is closer in spirit to the proof of Lambert-Cole and Miller given in \cite[Section~2.1]{LCMiller}.

To study the existence of $5$--dimensional quadrisections, we shall mainly work from a $4$--dimensional quadrisection on the ``middle level'' of a Morse function. The $4$--dimensional quadrisections that appear in this setting have specific properties.

\begin{definition}
 Let $X$ be a closed connected orientable $4$--manifold with two transversely embedded (possibly disconnected) closed surfaces $L_2$ and~$L_3$. A {\em special quadrisection of $X$ relative to $L_2$ and $L_3$} is a quadrisection $X=X_{23}\cup X_{24}\cup X_{14} \cup X_{13}$ (in this cyclic order) such that $X_{23}\cup X_{24}$ is a tubular neighborhood of $L_2$ together with $1$--handles attached, and similarly for $L_3$ in $X_{13}\cup X_{23}$.
\end{definition}

\begin{lemma}\label{lem:existquadri}
Let $X$ be a closed connected orientable $4$--manifold and $L_2$ and $L_3$ two closed surfaces (possibly disconnected)
in $X$ which are transverse to each other. Then there exists a special quadrisection $X=W_{23}\cup W_{24}\cup W_{14}\cup W_{13}$ relative to $L_2$ and $L_3$.
\end{lemma}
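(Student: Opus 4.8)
The target is a quadrisection $X = X_{23}\cup X_{24}\cup X_{14}\cup X_{13}$ (in cyclic order, as in the Islambouli--Naylor setting recalled in Section~\ref{sec:4Dquadri}) in which $X_{23}\cup X_{24}$ is a tubular neighborhood of $L_2$ with $1$--handles attached, and $X_{13}\cup X_{23}$ is a tubular neighborhood of $L_3$ with $1$--handles attached. The first step is to fix a tubular neighborhood $\nu L_2$ and $\nu L_3$; since $L_2\pitchfork L_3$, we may assume $\nu L_2\cap \nu L_3$ is a disjoint union of standard bidisk charts $D^2\times D^2$ around the intersection points, one factor normal to each surface. I would set $X_{23}$ to be (a thickening of) $\nu L_2\cap \nu L_3$ — a disjoint union of $4$--balls, one per intersection point — together with $1$--handles to connect it up and to realize the required genus; $X_{24}$ will be the rest of $\nu L_2$ (again with $1$--handles), and $X_{13}$ the rest of $\nu L_3$; and $X_{14}$ will be the closure of the complement $X\setminus(\nu L_2\cup\nu L_3)$. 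The immediate issue is that $X_{14}$, the complement of two plumbed surface-neighborhoods, is typically not a $4$--dimensional $1$--handlebody, so the naive decomposition fails the definition.

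**The main obstacle is exactly this: arranging that all four pieces are $1$--handlebodies and that the pairwise intersections $X_{i,i+1}$ are $3$--dimensional handlebodies.** I would resolve it by a Morse-theoretic argument on the complement: choose a handle decomposition of $X$ adapted to the pair $(L_2,L_3)$, i.e.\ build $X$ starting from $\nu L_2\cup\nu L_3$ and then attaching handles of index $2,3,4$. Handles of index $\geq 2$ in the complement can be traded: a $3$--handle attached to $X_{14}$ from its complementary side is a $1$--handle attached to (a neighborhood of) the union of the other pieces, and a $4$--handle is a $0$--handle; dualizing and carefully distributing these across the $W$'s, one pushes all the ``bad'' handles out of $X_{14}$. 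This is the standard trisection-existence maneuver (as in Gay--Kirby) adapted to the relative setting: one arranges that $X_{14}$ sees only $0$-- and $1$--handles, hence is a $1$--handlebody, at the cost of raising the genus of the central surface $\Sigma = X_{23}\cap X_{24}\cap X_{14}\cap X_{13}$ and of the three $3$--dimensional pieces $X_{23}\cap X_{24}$, $X_{24}\cap X_{14}$, $X_{14}\cap X_{13}$, $X_{13}\cap X_{23}$ by stabilizations. Verifying that $X_{23}, X_{24}, X_{13}$ are $1$--handlebodies is comparatively easy since they are surface-times-disk pieces with $1$--handles added, which is a boundary connected sum of $S^1\times D^3$'s.

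**Next I would check the corner/local-model condition and the ``relative'' constraints.** After the handle trades, one must verify that near every stratum the decomposition looks like $\mathbb R^{4+2-k}\times\Delta^{k-1}$ as in Definition~\ref{def:multisection} (here in the $4$--manifold version, so $\mathbb R^{6-k}\times\Delta^{k-1}$), which is a matter of choosing the smoothings of corners compatibly — automatic once the pieces are set up as thickenings glued along collars. The cyclic order $X_{23},X_{24},X_{14},X_{13}$ must be checked against the Islambouli--Naylor axioms: $X_{23}\cap X_{24}$, $X_{24}\cap X_{14}$, $X_{14}\cap X_{13}$, $X_{13}\cap X_{23}$ are $3$--handlebodies and the non-consecutive intersections $X_{23}\cap X_{14}$ and $X_{24}\cap X_{13}$ equal $\Sigma$; the latter holds because $\nu L_2\setminus(\nu L_2\cap\nu L_3)$ and $\nu L_3\setminus(\nu L_2\cap\nu L_3)$ only meet $\Sigma$, and $X_{14}$ meets $X_{23}$ only along $\Sigma$ by construction. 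Finally I would confirm the defining property of a \emph{special} quadrisection: $X_{23}\cup X_{24}$ is by construction $\nu L_2$ plus $1$--handles, and the handle trades only added $1$--handles to $X_{23}$, so the property persists; symmetrically for $L_3$ in $X_{13}\cup X_{23}$. The one genuinely delicate bookkeeping point will be ensuring that the $1$--handles we add to fix up $X_{14}$ can simultaneously be viewed as enlarging $X_{23}$ (shared with both surface neighborhoods) without breaking either ``tubular neighborhood plus $1$--handles'' description — this is where I expect to spend the most care, and it is resolved by always attaching the compensating $1$--handles along arcs that lie in $X_{23}$, the common piece.
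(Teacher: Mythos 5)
Your overall strategy (tubular neighborhoods of $L_2$ and $L_3$, a handle decomposition of the complement, Gay--Kirby-style handle rearrangements) is in the right spirit, and you have correctly identified that making $X_{14}$ a $1$--handlebody is the crux. But the route you set up is genuinely different from the paper's, and two of its steps as stated would not go through.

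The first gap is the treatment of the index--$2$ handles of the complement. You write that "handles of index $\geq 2$ in the complement can be traded" by "dualizing and carefully distributing these across the $W$'s." Dualizing turns the $3$-- and $4$--handles into $1$-- and $0$--handles for $X_{14}$, which is fine, but in dimension $4$ a $2$--handle is self-dual, so dualization does nothing for the $2$--handles. These $2$--handles are exactly what must be pushed out of $X_{14}$ and redistributed, and the way they are redistributed must be compatible with the "special" property (so they must end up cancelled against $1$--handles, with the $1$--handles living in the common piece $X_{23}$). The paper's key maneuver, which your write-up omits, is to decompose each such $2$--handle into a $1$--handle and two $2$--handles (each of which cancels that $1$--handle), absorb the $1$--handle into $W_{23}$, and send the two $2$--handles one each to $W_{24}$ and $W_{13}$. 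This asymmetric split is what simultaneously makes $W_{24}$ and $W_{13}$ $1$--handlebodies, makes $W_{14}$ see only the $3$--/$4$--handles, and preserves both "tubular neighborhood plus $1$--handles" descriptions.

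The second gap is the $3$--handlebody condition for the pairwise pieces. In your decomposition the pieces $X_{24}\cap X_{14}$ and $X_{13}\cap X_{14}$ are determined a priori by the geometry: they are essentially the circle bundles over $L_2$ (resp.\ $L_3$) with some disks removed, together with the corrections coming from the $1$--handles. Such circle bundles over punctured surfaces are typically not handlebodies (already for a trivial bundle over an annulus one gets $T^2\times I$). There is no freedom in your set-up to fix this. The paper avoids the issue by a different architecture: it takes $W_{23}$ to be the \emph{entire} drilled regular neighborhood $A'$ of $L_2\cup L_3$ together with the $0$--/$1$--handles of the complement and the extra $1$--handles coming from the decomposed $2$--handles, so that $\partial W_{23}$ is a connected sum of $S^1\times S^2$'s, and then freely chooses a Heegaard splitting $\partial W_{23}=W_{123}\cup W_{234}$ adapted to the curves $\del L'_2\cup M_2$ and $\del L'_3\cup M_3$. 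The remaining $3$--dimensional pieces $W_{124}$ and $W_{134}$ are then obtained from $W_{234}$ and $W_{123}$ by surgery along a link, and hence are automatically handlebodies. In your version $X_{23}$ is a small piece (the plumbing region), so $\partial X_{23}$ and the other $3$--dimensional intersections are rigidly prescribed, which is precisely what breaks the handlebody condition. You flagged this condition as the main obstacle, but the proposed resolution does not address it, and I think it cannot be addressed without substantially reorganizing the decomposition in the direction the paper takes.
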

\begin{proof}
We pick a regular neighborhood $A$ of $L_2\cup L_3$ and a Morse function $f\colon X\setminus \Int A \to \R$ (constant on $\del A$ and attaining its minimum
there).
Pick a collection of $2$--disks properly embedded in $A$ and which intersect $L_2$ or $L_3$ transversely
in a single point. If we have chosen enough such disks, removing a tubular neighborhood of each one from $A$ results in a handlebody $A'$
inside of which $L'_2=L_2\cap A'$ and $L'_3=L_3\cap A'$ are properly embedded. We would then get $A$ back by attaching $2$--handles along the link of circles
$\del L'_2\cup \del L'_3$. Next consider in $\del A$ the descending spheres of critical points of $f$ of index $0$ and $1$; we may assume by general position
that they lie in $\del A'\setminus (\del L'_2\cup \del L'_3)$. We obtain $A''$ by attaching these $0$-- and $1$--handles to $A'$. Note that $L'_2$ and $L'_3$
are still properly embedded in $A''$. Then we look at descending spheres of critical points of $f$ of index $2$. Again by general position
we may assume they lie in $\del A'' \setminus (\del L'_2\cup \del L'_3)$. We decompose each of these $2$--handles as a $1$--handle and two $2$--handles (each of which would cancel the $1$--handle). And finally we define $W_{23}$ to be $A''$ together with these $1$--handles attached (one for each critical point of $f$ of index $2$).
In the boundary of $W_{23}$ we have the link $\del L'_2\cup \del L'_3$ and the attaching spheres of the $2$--handles which now come in pairs
which we denote $M_2$ and~$M_3$. We choose a Heegaard splitting $\del W_{23}=W_{123}\cup W_{234}$ such that $W_{234}$ (resp. $W_{123}$) is a tubular neighborhood
of $\del L'_2\cup M_2$ (resp. $\del L'_3\cup M_3$) with $1$--handles attached (this is done by choosing
an ordered Morse function on the cobordism going from the boundary of one of these tubular neighborhoods to the other, and choosing a level
set separating critical points of index $1$ and $2$).

Now we define $W_{24}$ as $W_{234}$ with $2$--handles attached along $\del L'_2\cup M_2$,
with a corner along $\del W_{234}$ and a new boundary piece $W_{124}=\del W_{24}\setminus W_{234}$.
We claim that $W_{24}$ and $W_{124}$ are handlebodies. First, $W_{24}$ has $1$ and $2$--handles but all $2$--handles are cancelled against $1$--handles. Second, $W_{124}$ is the result of surgery on $W_{234}$, it is again a tubular neighborhood of a link of circles, with $1$--handles attached, so a handlebody.
Observe that $W_{23}\cup W_{24}$ is a tubular neighborhood of $L_2$ with $1$--handles attached: the extra $2$--handles attached along $M_2$
are cancelled against $1$--handles of $W_{23}$ by construction. We define $W_{13}$ similarly by attaching $2$--handles along $\partial L_3'\cup M_3$ instead.

The remaining piece $W_{14}=W\setminus \Int(W_{13}\cup W_{23}\cup W_{24})$ is also a handlebody since it has a handle
decomposition with only $0$--handles and $1$--handles (corresponding to the critical points of $f$ of index $3$ and $4$).
\end{proof}

\begin{theorem}\label{thm:existquadri}
Every closed connected orientable $5$--manifold $W$ admits a quadrisection.
\end{theorem}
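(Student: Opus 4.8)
The plan is to deduce Theorem~\ref{thm:existquadri} from Lemma~\ref{lem:existquadri} by promoting a Morse-theoretic decomposition of the $5$--manifold $W$ into a $4$--dimensional special quadrisection on a well-chosen middle level. Concretely, I would first pick a self-indexing Morse function $F\colon W\to[0,5]$ (after possibly passing to a connected, orientable manifold, which is given) such that $F^{-1}([0,2])$ is built from $0$--, $1$-- and $2$--handles and $F^{-1}([2,5])$ is built from $3$--, $4$-- and $5$--handles. Reorganizing, I would arrange that $W_1\cup W_2 := F^{-1}([0,5/2])$ is a $5$--dimensional handlebody (only $0$-- and $1$--handles) after trading the $2$--handles across the middle: the $2$--handles of $F$ attached to the sub-handlebody $F^{-1}([0,3/2])$ get absorbed, following the Gay--Kirby trisection philosophy, by passing to the middle cobordism and viewing the attaching/belt spheres of the $2$--handles. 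The upshot is a decomposition $W = (W_1\cup W_2)\cup (W_3\cup W_4)$ where each half is a $5$--dimensional $1$--handlebody (hence a boundary connected sum of copies of $S^1\times D^4$), and the separating hypersurface $X := \partial(W_1\cup W_2) = \partial(W_3\cup W_4)$ is a connected sum of copies of $S^1\times S^3$.

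Next I would locate inside this middle level $X$ the two surfaces along which Lemma~\ref{lem:existquadri} operates. The half $W_1\cup W_2$ should be split as $W_1$ (the $0$--handles, i.e.\ a ball, or more generally the cocore side) union $W_2$ (the $1$--handles side), meeting along a $3$--dimensional handlebody; dually $W_3\cup W_4$ splits as $W_3\cup W_4$ along another $3$--dimensional handlebody. The surfaces $L_2\subset X$ and $L_3\subset X$ are the traces in the middle level of these codimension--$2$ data: $L_2$ records how the $1$--handles of the bottom half meet $X$, and $L_3$ records the dual $1$--handles of the top half. They can be taken transverse after a small perturbation. Now apply Lemma~\ref{lem:existquadri} to $(X;L_2,L_3)$ to obtain a special quadrisection $X = X_{23}\cup X_{24}\cup X_{14}\cup X_{13}$ in which $X_{23}\cup X_{24}$ is a regular neighborhood of $L_2$ with $1$--handles, and $X_{13}\cup X_{23}$ is a regular neighborhood of $L_3$ with $1$--handles. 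The point of ``special'' is exactly that these neighborhoods are the intersections with $X$ of the $5$--dimensional pieces we are about to build: set $W_{ij}$ to be the trace of $X_{ij}$ together with the corresponding part of $W_i\cup W_j$ in the $5$--manifold, so that $W_i = W_{i4}\cup W_{i3}\cup\dots$ for $i\le 2$ and symmetrically for $i\ge 3$. One checks the four pieces $W_1,W_2,W_3,W_4$ obtained in this way are $5$--dimensional $1$--handlebodies (the two bottom ones because they are pieces of a handlebody cut along a handlebody plus $1$--handles; the two top ones dually), that the $W_{ij}$ are $4$--dimensional $1$--handlebodies (by the special property of the $4$--dimensional quadrisection), that the triple intersections $W_{ijk}$ are $3$--dimensional handlebodies (these are the cut-systems of the Heegaard splittings appearing in Lemma~\ref{lem:existquadri}), and that the quadruple intersection is the central surface $\Sigma$. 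Finally one verifies the local corner model of Definition~\ref{def:multisection}: this is automatic because all gluings are along collars and the handles are attached in the standard product way, so smoothing corners produces exactly the $\R^{n+2-k}\times\Delta^{k-1}_{\sigma(i)}$ picture.

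The main obstacle I expect is the bookkeeping that turns the two halves $F^{-1}([0,5/2])$ and $F^{-1}([5/2,5])$ into genuine $1$--handlebodies $W_1\cup W_2$ and $W_3\cup W_4$ while simultaneously extracting surfaces $L_2,L_3$ on the middle level whose regular neighborhoods with $1$--handles are precisely the required pieces. Trading the $2$--handles of the Morse function into the middle level --- so that what remains below is only $0$-- and $1$--handles --- is exactly the step where one must be careful: the $2$--handles of $F$ and the $3$--handles (dual $2$--handles from above) are what get recorded by $L_2$ and $L_3$, and one must check their attaching data lies in generic position with respect to the $1$-- and $2$--spine of the induced decomposition, mirroring the general-position arguments already used inside the proof of Lemma~\ref{lem:existquadri}. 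Once this reduction is set up cleanly, the verification of the axioms of Definition~\ref{def:multisection} is routine: each condition is local and each piece is manifestly assembled from $0$--, $1$--handles (in the appropriate dimension), with corners appearing only along the standard product models. I would therefore spend most of the write-up on the handle-trading/middle-level reduction and then invoke Lemma~\ref{lem:existquadri} essentially verbatim.
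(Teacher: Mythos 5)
The key step you propose---arranging that $F^{-1}([0,5/2])$ be a $5$--dimensional $1$--handlebody by ``trading the $2$--handles across the middle''---is impossible in general, and this is a genuine gap. If both halves $F^{-1}([0,5/2])$ and $F^{-1}([5/2,5])$ were genus--$k$ $1$--handlebodies glued along their common boundary $\#^k S^1\times S^3$, then a Mayer--Vietoris computation would give an injection $H_2(W)\hookrightarrow H_1(\#^k S^1\times S^3)\cong\Z^k$, forcing $H_2(W)$ to be torsion-free. This already fails for the Wu manifold $SU(3)/SO(3)$, a closed connected orientable $5$--manifold with $H_2\cong\Z/2$. Compounding this, the objects you want to feed into Lemma~\ref{lem:existquadri} would not actually be surfaces: if the bottom half were a $1$--handlebody, the ``trace of its $1$--handles'' on the middle level is not a closed surface (belt spheres of $5$--dimensional $1$--handles are $3$--spheres), so the lemma would not apply as stated.

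The paper's proof keeps the $2$--handles below the middle level rather than trying to expel them. It takes $X=f^{-1}(c)$ with $c$ between the index--$2$ and index--$3$ critical values (so $f^{-1}((-\infty,c])$ is generally \emph{not} a $1$--handlebody), and sets $L_2^+\subset X$ and $L_3^-\subset X$ to be the links of \emph{ascending $2$--spheres of the index-$2$ critical points} and \emph{descending $2$--spheres of the index-$3$ critical points}; these genuinely are embedded surfaces, transverse by the Morse--Smale condition, and Lemma~\ref{lem:existquadri} applies to $(X;L_2^+,L_3^-)$. Then $W_2$ is built by attaching $3$--handles downwards onto a thickening of $W_{23}\cup W_{24}$ along $L_2^+$: this cancels the $2$--handles that such a thickening would otherwise carry, making $W_2$ a $1$--handlebody, while $W_1=\overline{f^{-1}((-\infty,c])\setminus W_2}$ sees only $0$-- and $1$--handles of $f$. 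The top half is symmetric, using $L_3^-$. In short, the burden of absorbing the middle-index handles is carried by the special quadrisection together with the $3$--handle attachments along $L_2^+$ and $L_3^-$, not by a preliminary handle-trade of the whole bottom half, which cannot be performed.
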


\begin{proof}
We choose an ordered Morse function $f\colon W \to \R$ together with a Morse--Smale gradient vector field.
Consider a level set $X=f^{-1}(c)$ separating the critical points of index $2$ and $3$. In $X$, we have two links
of $2$--spheres $L_2^+$ and $L_3^-$ which correspond respectively to ascending spheres of all critical points of index $2$ and descending spheres of all critical points of index $3$ (see Figure~\ref{fig:quad}). The Morse--Smale condition guarantees that $L_2^+$ and $L_3^-$ are transverse to each other.
The main step is to construct a special quadrisection $X=W_{23}\cup W_{24}\cup W_{14}\cup W_{13}$ relative to $L_2^+$ and~$L_3^-$.
Observe that $X$ is connected and orientable as $W$ is, so the existence of this quadrisection is guaranteed by Lemma~\ref{lem:existquadri} applied to $L_2^+$ and $L_3^-$.

\begin{figure}[htb]
\begin{center}
\begin{tikzpicture}
\begin{scope}
 \draw (0,0) circle (2);
 \draw (-2,0) -- (2,0) (0,-2) -- (0,2);
 \draw[blue] (1,0) ellipse (0.5 and 1);
 \draw[blue] (0.4,1) node {$L_3^-$};
 \draw[white] (1,-1) node {$\bullet$};
 \draw[red] (0,-1) ellipse (1 and 0.5);
 \draw[red] (-1,-0.4) node {$L_2^+$};
 \draw (1.8,-1.8) node {$W_{23}$} (1.8,1.8) node {$W_{13}$} (-1.8,-1.8) node {$W_{24}$} (-1.8,1.8) node {$W_{14}$};
\end{scope}
\begin{scope} [yscale=0.5,xshift=7cm]
 \draw[fill,red] (2,-4) -- (2,0) arc (40:124.5:2) -- (-0.664,-1.63) arc (124.5:140:2) -- (-1.065,-4) arc (-140:-40:2);
 \draw[fill,blue!80,opacity=0.8] (-0.664,0.37) arc (124.5:140:2) -- (-1.065,-2) arc (-140:-55.5:2) -- (1.6,-0.37);
 \draw[fill,orange,opacity=0.7] (-1.065,2) arc (-140:-68:2) -- (1.216,0.57) -- (-0.28,-0.57) arc (-112:-140:2);
 \draw[fill,vert,opacity=0.7] (2,2) arc (40:68:2) -- (-0.28,1.43) -- (-0.28,-0.57) arc (-112:-40:2);
 \draw[fill,orange,opacity=0.6] (2,2) -- (2,4)  arc (40:140:2) -- (-1.065,2) arc (-140:-40:2);
 \foreach \y/\d in {-4/dashed,-2/dashed,0/dashed,2/dashed,4/} {
 \draw (2,\y) arc (-40:-140:2);
 \draw[\d] (2,\y) arc (40:140:2);}
 \foreach \x in {-1.065,2} \draw (\x,-4) -- (\x,4);
 \foreach \y in {2,0} {
 \draw[dashed] (1.6,-0.37-\y) -- (-0.664,0.37-\y);
 \draw[dashed] (-0.28,-0.57+\y) -- (1.216,0.57+\y);}
 \draw (1.6,-0.37) -- (1.6,-2.37) (-0.28,-0.57) -- (-0.28,1.43);
 \draw[dashed] (-0.664,0.37) -- (-0.664,-1.63) (1.216,0.57) -- (1.216,2.57);
 \draw[->] (4,-4) -- (4,4);
 \draw (3,0.5) node {$\longrightarrow$} node[above] {$f$};
 \foreach \i/\y in {1/-2,2/0,3/2}
 \draw[dashed] (2.2,\y) -- (4,\y) node[right] {$c_\i$};
 \draw (4.8,0) node {$=c$};
 \foreach \i/\y/\c in {1/-3/red,2/-1/blue,3/1/vert,4/3/orange}
 \draw[\c] (-1.5,\y) node {$W_\i$};
 \draw (-0.56,-0.05) node {$\scriptstyle{W_{24}}$};
 \draw (0.6,-0.4) node {$\scriptstyle{W_{23}}$};
 \draw (1.5,0) node {$\scriptstyle{W_{13}}$};
 \draw (0.4,0.4) node {$\scriptstyle{W_{14}}$};
\end{scope}
\end{tikzpicture}
\caption{Quadrisecting a $5$--manifold\\
{\footnotesize{The left hand side represents the middle level $f^{-1}(c)$. The right hand side represents the Morse function and the associated quadrisection: the level $c_i$ separates the critical points of index $i$ and $i+1$.}}} \label{fig:quad}
\end{center}
\end{figure}

We then construct the quadrisection of $W$ as follows.
Attach $3$--handles downwards on $W_{23}\cup W_{24}$ along $L_2^+$ so as to obtain $W_2$, with a corner along $\del (W_{23}\cup W_{24})$.
We claim that $W_2$ is a handlebody: indeed $(W_{23}\cup W_{24})\times [0,1]$ has handles of index $0$, $1$ and $2$ but all handles of index $2$ are cancelled
by the attachment along $L_2^+$. Moreover the new boundary component $W_{12}=\del W_2\setminus \Int(W_{23}\cup W_{24}))$
is also a handlebody: each tubular neighborhood of a $2$--sphere of the link $L_2^+$ becomes a tubular neighborhood of a circle in $W_{12}$ (after surgery).
We define $W_1=\overline{\{f\leq c\}\setminus W_2}$. It is a handlebody since it is constructed from $0$--handles and $1$--handles (seen from below).

Now the situation is completely symmetric: we define $W_3$ and $W_4$ similarly in the region above $X$, attaching $3$--handles along $L_3^-$ instead of $L_2^+$.
\end{proof}

\begin{remark}
 As was pointed out by Maggie Miller, this existence result can be recovered using the trisections of cobordisms between $4$--manifolds developed in \cite{LCMiller}. The rough idea is to start with an ordered Morse function on the $5$--manifold $W$ (with one critical point of index $5$), define $W_1$ as a sublevel set containing the critical points of index $0$ and $1$, and then trisect the cobordism between $\partial W_1$ and the boundary of a small ball around the critical point of index $5$. A careful analysis of Lambert-Cole and Miller's proof shows that the construction can be handled in order to satisfy the requirements of a quadrisection.
\end{remark}

\section{More examples}
\label{sec:moreExs}

We shall illustrate the existence proof of the previous section with some examples; we use the notations from there.

\begin{example}
We start with $S^5$. Let $f:S^5\to[0,5]$ be a self-indexing Morse function with four critical points, of indices $0,1,2$ and $5$. The middle level set $X=f^{-1}\left(\frac52\right)$ is diffeomorphic to $S^4$ and $L_2^+$ is a trivially embedded $2$--sphere in $X$. We now quadrisect $X$ following the proof of Lemma~\ref{lem:existquadri}. We consider a handle decomposition of $X$ with a cancelling $2$/$3$--pair, where $L_2^+$ is the belt sphere of the $2$--handle. Hence $W_{23}$ is a $4$--ball meeting $L_2^+$ transversely along a disk. We have a Kirby diagram of $X$ with a single red circle $K$ associated to $L_2^+$: the diagram represents $\partial W_{23}$ and $K$ is its intersection with $L_2^+$. A Heegaard surface for $\partial W_{23}$ is given by the boundary of a tubular neighborhood of $K$, see Figure~\ref{figquadS5}. We immediately get a Heegaard diagram of $\partial W_{23}=W_{234}\cup W_{123}$. Noting that $W_{134}$ is isotopic to $W_{123}$ and that $W_{124}$ is obtained from $W_{234}$ by gluing a $2$--handle along $K$ (so that the red curve bounds a disk in $W_{124}$), we get the quadrisection diagram in Figure~\ref{figquadS5}.
\end{example}

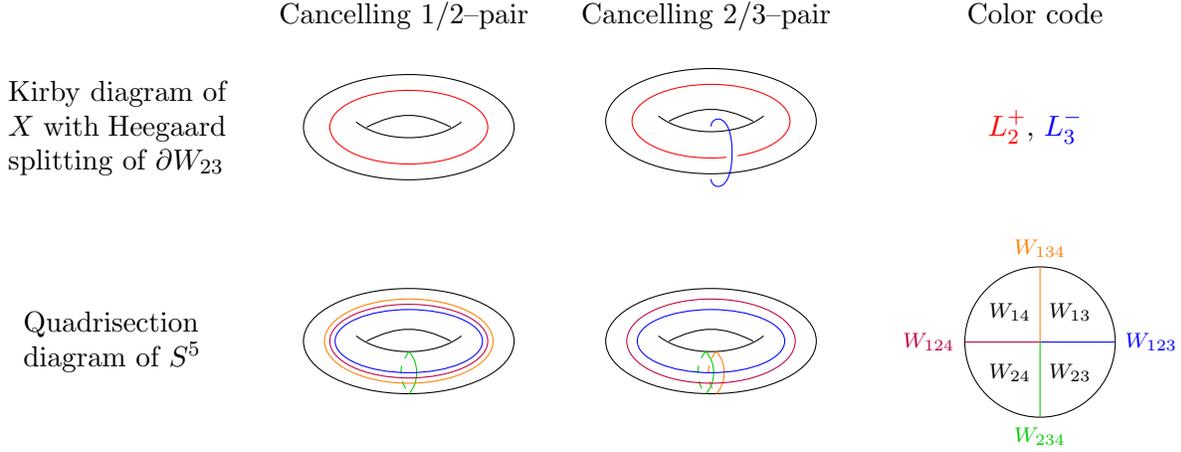
\begin{figure}[htb]
\begin{center}
\begin{tabular}{ccccccc}
 && Cancelling $1/2$--pair && Cancelling $2/3$--pair && Color code \\ &&&&&& \\
 \begin{minipage}{2.9cm}
 Kirby diagram of $X$ with Heegaard splitting of $\partial W_{23}$
 \end{minipage}
 &&
 \raisebox{-0.6cm}{
 \begin{tikzpicture} [scale=0.7]
  \draw (0,0) ellipse (2 and 1);
  \draw \buco;
  \draw[red] (0,0) ellipse (1.5 and 0.7);
 \end{tikzpicture}}
 &&
 \raisebox{-0.8cm}{
 \begin{tikzpicture} [scale=0.7]
  \draw (0,0) ellipse (2 and 1);
  \draw \buco;
  \draw[red] (0,0) ellipse (1.5 and 0.7);
  \draw[white] (0.4,-0.7) node {$\bullet$};
  \draw[blue] (0,-0.1) .. controls +(0,0.2) and +(0,0.8) .. (0.4,-0.6) .. controls +(0,-0.8) and +(0,-0.2) .. (0,-1.1);
 \end{tikzpicture}}
 && \color{red} $L_2^+$\color{black}, \color{blue} $L_3^-$\color{black} \\ &&&&&& \\
 \begin{minipage}{2.5cm}
 Quadrisection \\ diagram of $S^5$ \end{minipage}
 &&
 \raisebox{-0.6cm}{
 \begin{tikzpicture} [scale=0.7]
  \draw (0,0) ellipse (2 and 1);
  \draw \buco;
  \draw[vert] (0,-0.2) ..controls +(0.2,-0.2) and +(0.2,0.2) .. (0,-1);
  \draw[dashed,vert] (0,-0.2) ..controls +(-0.2,-0.3) and +(-0.2,0.3) .. (0,-1);
  \draw[purple] (0,0) ellipse (1.5 and 0.7);
  \draw[orange] (0,0) ellipse (1.6 and 0.8);
  \draw[blue] (0,0) ellipse (1.4 and 0.6);
 \end{tikzpicture}}
 &&
 \raisebox{-0.6cm}{
 \begin{tikzpicture} [scale=0.7]
  \draw (0,0) ellipse (2 and 1);
  \draw \buco;
  \draw[dashed,orange] (0.1,-0.2) ..controls +(-0.2,-0.2) and +(-0.2,0.2) .. (0.1,-1);
  \draw[dashed,vert] (-0.1,-0.2) ..controls +(-0.2,-0.2) and +(-0.2,0.2) .. (-0.1,-1);
  \draw[vert] (-0.1,-0.2) ..controls +(0.2,-0.2) and +(0.2,0.2) .. (-0.1,-1);
  \draw[orange] (0.1,-0.2) ..controls +(0.2,-0.2) and +(0.2,0.2) .. (0.1,-1);
  \draw[purple] (0,0) ellipse (1.6 and 0.8);
  \draw[blue] (0,0) ellipse (1.4 and 0.6);
 \end{tikzpicture}}
 &&
 \raisebox{-1.4cm}{
 \begin{tikzpicture} [scale=0.5]
  \draw (0,0) circle (2);
  \draw[purple] (-2,0) node[left] {$\scriptstyle{W_{124}}$} -- (0,0);
  \draw[blue] (0,0) -- (2,0) node[right] {$\scriptstyle{W_{123}}$};
  \draw[vert] (0,-2) node[below] {$\scriptstyle{W_{234}}$} -- (0,0);
  \draw[orange] (0,0) -- (0,2) node[above] {$\scriptstyle{W_{134}}$};
  \draw (0.8,-0.8) node {$\scriptstyle{W_{23}}$} (0.8,0.8) node {$\scriptstyle{W_{13}}$} (-0.8,-0.8) node {$\scriptstyle{W_{24}}$} (-0.8,0.8) node {$\scriptstyle{W_{14}}$};
 \end{tikzpicture}}
\end{tabular}
\caption{Quadrisecting $S^5$ with a Morse function} \label{figquadS5}
\end{center}
\end{figure}

\begin{example}
Again with $S^5$, we now consider a Morse function with four critical points, of indices $0,2,3$ and $5$. The middle level set $X$ is diffeomorphic to $S^2\times S^2$ with $L_2^+$ identified with $S^2\times\{*\}$ and $L_3^-$ with $\{*\}\times S^2$. We can then define $W_{23}\subset X$ as $D^2\times D^2$, where $*\in D^2$, and we have a Kirby diagram of $X$ given by the two circles associated to $L_2^+$ and $L_3^-$ in $\partial W_{23}$. We deduce the quadrisection diagram as previously, see Figure~\ref{figquadS5}.
\end{example}

\begin{example}
We turn to $S^2\times S^3$. It admits a Morse function with four critical points, of indices $0,2,3$ and~$5$. The middle level set $X$ is again diffeomorphic to $S^2\times S^2$, with this time $L_2^+$ and $L_3^-$ identified with $S^2\times\{p\}$ and $S^2\times\{q\}$ for two distinct points $p$ and $q$. Define $W_{23}$ as $D\times R$, where $D$ is a $2$--disk embedded in $S^2$ and $R$ is an embedded annulus containing $p$ and $q$. As such, $W_{23}$ is diffeomorphic to $S^1\times B^3$. Figure~\ref{fig:S2xS3bis} represents the attachment link of four $2$--handles in~$\partial W_{23}$. The blue circle $\partial D\times \{p\}$ is associated to $L_2^+$, the red circle $\partial D\times \{q\}$ is associated to $L_3^-$ and the other two are given by $\{*\}\times\partial R$. Define a Heegaard surface for $\partial W_{23}$ as follows: on the Kirby diagram of Figure~\ref{fig:S2xS3bis}, draw a horizontal plane in the middle, which represents a torus, and two other tori around the red and blue circle; then join these tori by two tubes in the obvious way. From this deduce the Heegaard splitting of $\partial W_{23}$ given by the green and blue curves on Figure~\ref{fig:S2xS3bis} ---we keep the color code of Figure~\ref{figquadS5}. Then complete the diagram to get a quadrisection diagram of $S^2\times S^3$. Note that it is diffeomorphic to the diagram of Figure~\ref{fig:S2xS3}.
\end{example}

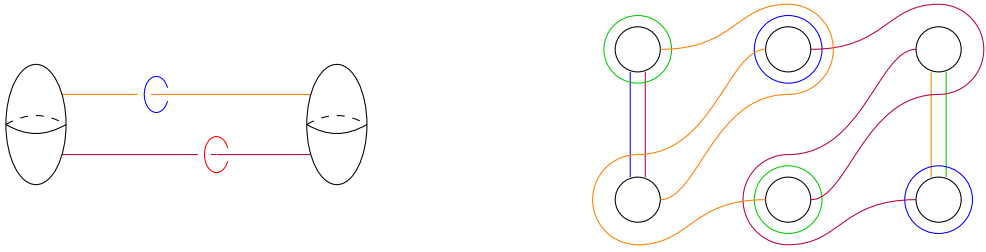
\begin{figure}[htb]
\begin{center}
\begin{tikzpicture}
\begin{scope} [scale=0.4]
 \draw[orange] (0,1) -- (4,1);
 \draw[purple] (0,-1) -- (6,-1);
 \drawwb{(4,1) ellipse (0.4 and 0.6);}
 \drawwr{(6,-1) ellipse (0.4 and 0.6);}
 \drawwp{(4,1) -- (10,1);}
 \drawwo{(6,-1) -- (10,-1);}
 \foreach \x in {0,10} {
 \begin{scope} [xshift=\x cm]
 \draw[fill=white] (0,0) ellipse (1 and 2);
 \draw (-1,0) .. controls +(0.7,-0.4) and +(-0.7,-0.4) .. (1,0);
 \draw[dashed] (-1,0) .. controls +(0.7,0.4) and +(-0.7,0.4) .. (1,0);
 \end{scope}}
\end{scope}
\begin{scope} [xshift=10cm]
 \foreach \x in {-2,0,2} \foreach \y in {-1,1}
 \draw (\x,\y) circle (0.3);
 \foreach \x/\c in {-2.1/blue,-1.9/purple,1.9/orange,2.1/vert}
 \draw[\c] (\x,-0.7) -- (\x,0.7);
 \foreach \x/\y/\c in {-2/1/vert,0/-1/vert,0/1/blue,2/-1/blue}
 \draw[\c] (\x,\y) circle (0.45);
 \foreach \t/\c in {0/purple,180/orange} {
 \draw[\c,rotate=\t] (0.3,1) .. controls +(1,0) and +(-0.7,0) .. (2,1.6) arc (90:-90:0.6) .. controls +(-1,0) and +(0.5,0) .. (0.3,-1);
 \draw[\c,rotate=\t] (1.7,-1) .. controls +(-1,0) and +(0.7,0) .. (0,-1.6) arc (270:90:0.6) .. controls +(1,0) and +(-0.5,0) .. (1.7,1);}
\end{scope}
\end{tikzpicture}
\caption{Kirby diagram of $S^2\times S^2$ and quadrisection diagram of $S^2\times S^3$}
\label{fig:S2xS3bis}
\end{center}
\end{figure}

\begin{example}
Our last example is the projective space $\RP^5$.
On $\RP^5$, we have a Morse function $f$ given by $f\big([x_0:\dots:x_5]\big)=\frac{\sum_{i=0}^5 ix_i^2}{\sum_{i=0}^5 x_i^2}$, with one critical point of each index.

\begin{lemma}
 The middle level set $X=f^{-1}\left(\frac52\right)$ is diffeomorphic to the quotient of $S^2\times S^2$ by the antipodal map (on both components simultaneously).
\end{lemma}
\begin{proof}
 We view $\RP^5$ as a quotient of $S^5$ and we work with homogeneous coordinates in $S^5$.
 In the CW--decomposition of $\RP^5$ associated to the Morse function $f$, the $2$--skeleton is $\{[x_0:x_1:x_2:0:0:0]\}$. The level set $X$ is isotopic to the boundary of a regular neighborhood of this $2$--skeleton; such a neighborhood is for instance $\{[x_0:\dots:x_5]\mid x_3^2+x_4^2+x_5^2\leq\frac12\}$. Thus $X\cong\{[x_0:\dots:x_5]\mid x_0^2+x_1^2+x_2^2=x_3^2+x_4^2+x_5^2\}$ is identified with the quotient of $S^2\times S^2\subset S^5$ by the antipodal map.
\end{proof}

In this description, the ascending sphere $L_2^+\subset X$ (resp. descending sphere $L_3^-\subset X$) of the critical point of index $2$ (resp. $3$) is the image of $\{(0,0,\pm1)\}\times S^2$ (resp. $S^2\times\{(0,0,\pm1)\}$).
They intersect exactly twice.

We will first determine a Kirby diagram of $X$ including the embedding of the spheres $L_2^+$ and~$L_3^-$, then deduce a quadrisection diagram of $X$ and $\RP^5$.

\begin{figure}[htb]
\begin{center}
\begin{tikzpicture} [xscale=0.6,yscale=0.5]
 \foreach \x in {0,8} {
 \begin{scope} [xshift=\x cm]
 \draw[blue,rounded corners=8pt] (2,3) -- (1,3) -- (1,6) -- (7,6) node[right] {$0$} -- (7,3) -- (3,3);
 \drawwr{(-1,4) -- (2,4) (3,4) -- (4,4) node[right] {$0$} -- (4,5) -- (-1,5) (9,4) -- (6,4) -- (6,5) -- (9,5);}
 \drawwb{(7,3.5) -- (7,4.5);}
 \draw[rounded corners=8pt] (-1,2) -- (2,2) (-1,0) -- (4,0) node[below] {$2$} -- (6,2) -- (9,2);
 \draww{(3,2) -- (4,2) -- (6,0) -- (9,0);}
 \draw (2,1.5) -- (3,1.5) -- (3,4.5) -- (2,4.5) -- (2,1.5);
 \draw (2.5,3) node {$+1$};
 \end{scope}}
 \foreach \x in {0,18} {
 \begin{scope} [xshift=\x cm]
 \draw[fill=white] (-1,3) ellipse (1 and 4);
 \draw (-2,3) .. controls +(0.7,-0.4) and +(-0.7,-0.4) .. (0,3);
 \draw[dashed] (-2,3) .. controls +(0.7,0.4) and +(-0.7,0.4) .. (0,3);
 \end{scope}}
 \draw[red] (13.7,5) node {$R$};
 \draw (15,1.5) node {$K$};
 \draw (15,-0.5) node {$J$};
\end{tikzpicture}
\caption{Kirby diagram of $S^2\times S^2$} \label{figKirbyS2xS2}
\end{center}
\end{figure}
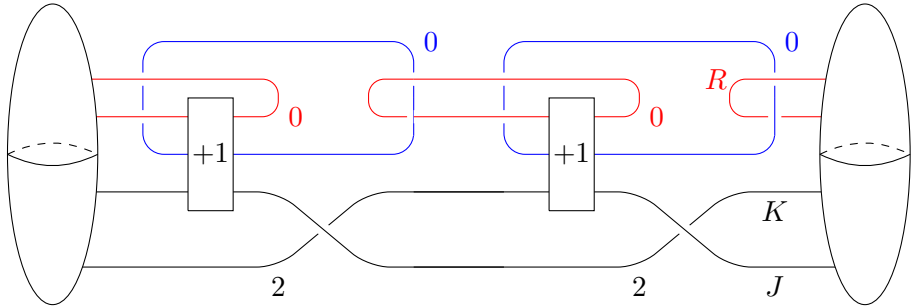

\begin{lemma}
 The Kirby diagram of Figure~\ref{figKirbyS2xS2} represents the closed manifold $S^2\times S^2$. Moreover, the $2$--spheres obtained from the red and blue components by gluing the core of the corresponding handle with an embedded $2$--disk in the $S^2\times S^1$ of the diagram can be identified with $\{*\}\times S^2$ and $S^2\times\{*\}$.
\end{lemma}
\begin{proof}
 Slide $R$ twice and $J$ once over $K$, so that the $2$--handle represented by $K$ cancels with the $1$--handle. After simplifications, this gives the left image of Figure~\ref{figKirbyS2xS2again}. Then slide a blue component over the other one; it gets split from the link. Do the same with red components. Finally slide the black component over the red and the blue successively. This gives the right image of Figure~\ref{figKirbyS2xS2again}.
\end{proof}

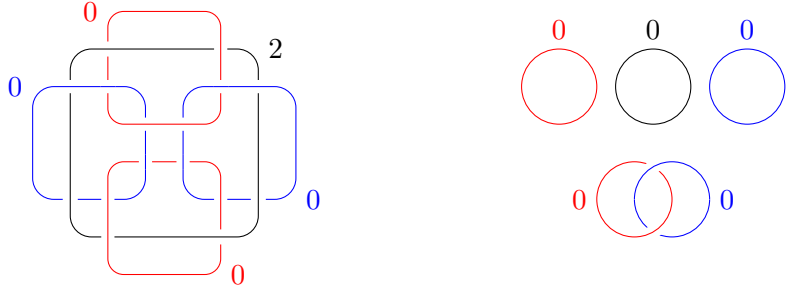
\begin{figure}[htb]
\begin{center}
\begin{tikzpicture} [scale=0.5]
 \draw[rounded corners=8pt] (1,3) -- (1,1) -- (6,1) -- (6,6) node[right] {$2$} -- (1,6) -- (1,3);
 \drawwb{(0.8,2) -- (0,2) -- (0,5) node[left] {$0$} -- (3,5) -- (3,2) -- (1.2,2);}
 \drawwb{(6.2,2) -- (7,2) node[right] {$0$} -- (7,5) -- (4,5) -- (4,2) -- (5.8,2);}
 \drawwr{(2,6.2) -- (2,7) node[left] {$0$} -- (5,7) -- (5,4) -- (2,4) -- (2,5.8);}
 \drawwr{(5,0.8) -- (5,0) node[right] {$0$} -- (2,0) -- (2,3) -- (5,3) -- (5,1.2);}
 \drawwb{(1.8,5) -- (2.2,5) (4.8,5) -- (5.2,5);}
 \drawwb{(3,2.8) -- (3,3.2) (4,2.8) -- (4,3.2);}
\begin{scope} [xshift=15cm,yshift=2cm]
 \draw[blue] (1,0) arc (-180:0:1);
 \drawwr{(1,0) circle (1);}
 \drawwb{(3,0) node[right] {$0$} arc (0:180:1);}
 \draw[red] (0,0) node[left] {$0$};
 \foreach \x/\c in {-1/red,1.5/black,4/blue}
 {\draw[\c] (\x,3) circle (1) (\x,4) node[above] {$0$};}
\end{scope}
\end{tikzpicture}
\caption{Other Kirby diagrams of $S^2\times S^2$} \label{figKirbyS2xS2again}
\end{center}
\end{figure}

\begin{figure}[htb]
\begin{center}
\begin{tikzpicture} [xscale=0.5,yscale=0.42]
\begin{scope}
 \draw[blue,rounded corners=8pt] (2,3) -- (1,3) -- (1,6) -- (7,6) node[right] {$0$} -- (7,3) -- (3,3);
 \drawwr{(-1,4) -- (2,4) (3,4) -- (4,4) node[right] {$0$} -- (4,5) -- (-1,5) (9,4) -- (6,4) -- (6,5) -- (9,5);}
 \drawwb{(7,3.5) -- (7,4.5);}
 \draw[rounded corners=8pt] (-1,2) -- (2,2) (-1,0) -- (4,0) node[below] {$3$} -- (6,2) -- (9,2);
 \draww{(3,2) -- (4,2) -- (6,0) -- (9,0);}
 \draw (2,1.5) -- (3,1.5) -- (3,4.5) -- (2,4.5) -- (2,1.5);
 \draw (2.5,3) node {$+1$};
 \foreach \x in {0,10} {
 \begin{scope} [xshift=\x cm]
 \draw[fill=white] (-1,3) ellipse (1 and 4);
 \draw (-2,3) .. controls +(0.7,-0.4) and +(-0.7,-0.4) .. (0,3);
 \draw[dashed] (-2,3) .. controls +(0.7,0.4) and +(-0.7,0.4) .. (0,3);
 \end{scope}}
\end{scope}
\begin{scope} [xshift=18cm,yshift=1cm]
 \draw[blue,rounded corners=8pt] (2,3) -- (1,3) -- (1,6) -- (7,6) node[right] {$0$} -- (7,3) -- (3,3);
 \drawwr{(-1,4) -- (2,4) (3,4) -- (4,4) node[right] {$0$} -- (4,5) -- (-1,5) (9,4) -- (6,4) -- (6,5) -- (9,5);}
 \drawwb{(7,3.5) -- (7,4.5);}
 \draw[rounded corners=8pt,purple] (-1,2) -- (2,2) (-1,0) .. controls +(4,0) and +(3,0) .. (0,-2.8) (10,-2.8) -- (6,-2.8) -- (4,-1.2) -- (4,0.2) node[left] {$3$} -- (6,2) -- (9,2);
 \drawwo{(3,2) -- (4,2) -- (6,0) -- (9,0);}
 \draw[orange] (0,-3.2) -- (10,-3.2);
 \draw (2,1.5) -- (3,1.5) -- (3,4.5) -- (2,4.5) -- (2,1.5);
 \draw (2.5,3) node {$+1$};
 \foreach \x in {0,10} {
 \begin{scope} [xshift=\x cm]
 \draw[fill=white] (-1,3) ellipse (1 and 4);
 \draw (-2,3) .. controls +(0.7,-0.4) and +(-0.7,-0.4) .. (0,3);
 \draw[dashed] (-2,3) .. controls +(0.7,0.4) and +(-0.7,0.4) .. (0,3);
 \end{scope}}
 \foreach \x in {0,10} {
 \begin{scope} [xshift=\x cm,yshift=-6cm]
 \draw[fill=white] (-1,3) ellipse (1 and 1.2);
 \draw (-2,3) .. controls +(0.7,-0.4) and +(-0.7,-0.4) .. (0,3);
 \draw[dashed] (-2,3) .. controls +(0.7,0.4) and +(-0.7,0.4) .. (0,3);
 \end{scope}}
\end{scope}
\end{tikzpicture}
\caption{Kirby diagrams of the $4$--manifold $X$} \label{figKirbyX}
\end{center}
\end{figure}
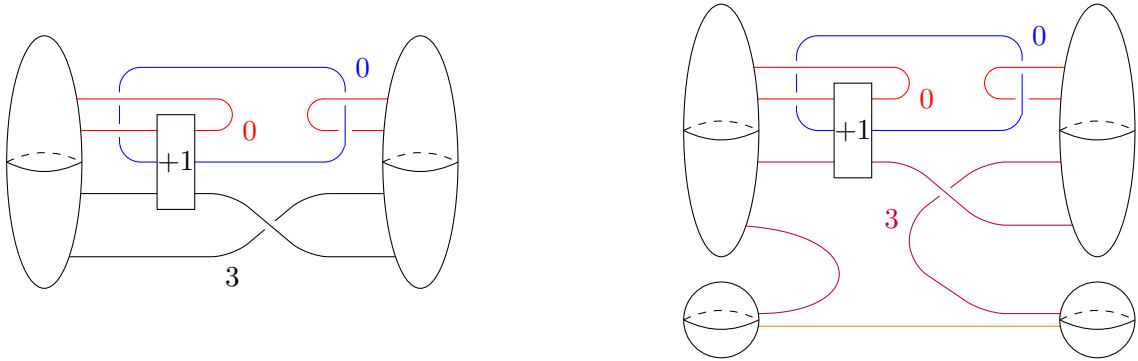

\begin{lemma}
 The Kirby diagrams of Figure~\ref{figKirbyX} represent the middle level set $X$.
\end{lemma}
\begin{proof}
 The $4$--manifold represented by the Kirby diagram on the left image of Figure~\ref{figKirbyX} has a double cover given by the Kirby diagram in Figure~\ref{figKirbyS2xS2}, thus this cover is $S^2\times S^2$ and the covering map identifies $\{*\}\times S^2$ with $\{-*\}\times S^2$ and $S^2\times\{*\}$ with $S^2\times\{-*\}$.
\end{proof}

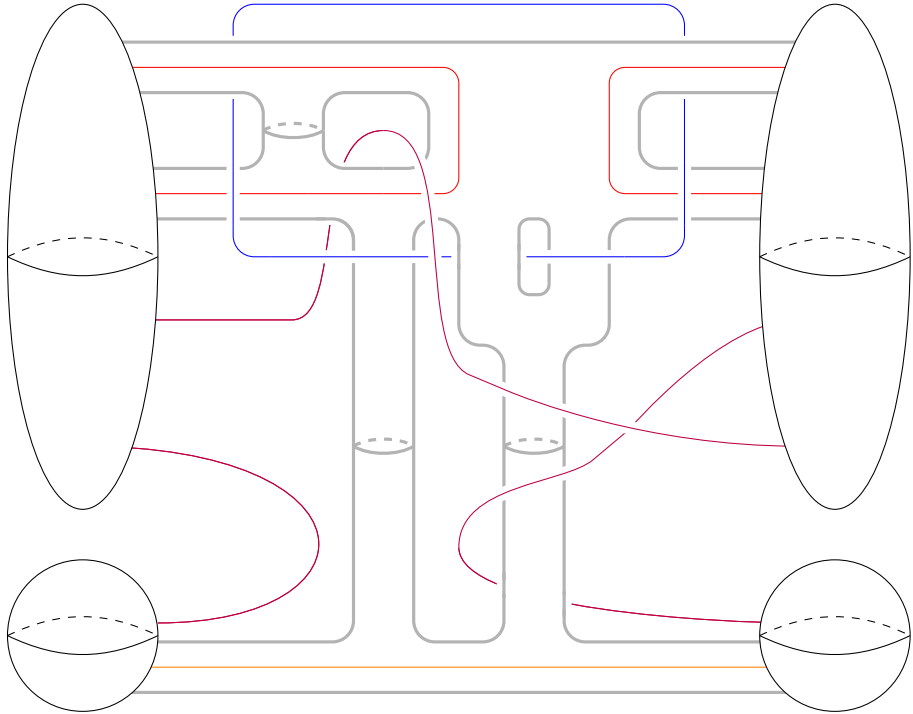
\begin{figure}[htb]
\begin{center}
\begin{tikzpicture} [yscale=0.84]
 \draw[blue,rounded corners=8pt] (3,3) --
 (1,3) -- (1,5.5) (1,6.5) -- (1,7) -- (7,7) -- (7,6.5) (7,5.5) -- (7,3) -- (3,3);
 \draw[gray!60,very thick,rounded corners=8pt] (-1,-3.9) -- (9,-3.9)
 (-1,-3.1) -- (2.6,-3.1) -- (2.6,2.9) (3.4,2.9) -- (3.4,-3.1) -- (4.6,-3.1) -- (4.6,1.6) -- (4,1.6) -- (4,3.6) -- (3.4,3.6) -- (3.4,3.1) (2.6,3.1) -- (2.6,3.6) -- (-1,3.6)
 (9,-3.1) -- (5.4,-3.1) -- (5.4,1.6) -- (6,1.6) -- (6,3.6) -- (9,3.6)
 (-1,6.4) -- (9,6.4)
 (-1,4.4) -- (1.4,4.4) -- (1.4,5.6) -- (-1,5.6)
 (3,4.4) -- (3.6,4.4) -- (3.6,5.6) -- (2.2,5.6) -- (2.2,5)
 (9,4.4) -- (6.4,4.4) -- (6.4,5.6) -- (9,5.6);
 \draw[gray!60,very thick,rounded corners=4pt] (4.8,3) -- (4.8,2.4) -- (5.2,2.4) -- (5.2,3.6) -- (4.8,3.6) -- (4.8,3);
 \drawwr{(-1,4) -- (4,4) -- (4,6) -- (-1,6) (9,4) -- (6,4) -- (6,6) -- (9,6);}
 \foreach \x in {1,7}
 \drawwb{(\x,3.5) -- (\x,4.5);}
 \drawwb{(5,3) -- (6.2,3);}
 \draw[rounded corners=8pt,purple] (-1,0) .. controls +(4,0) and +(3,0) .. (0,-2.8)
 (4,-1.6) .. controls +(0,-1) and +(-1,0) .. (8.5,-2.8)
 (-1,2) -- (1.5,2) .. controls +(1,0) and +(-1,0) .. (3,5) ;
 \draw[white,line width=5pt]
 (4.5,-2.2) -- (5.5,-2.5)
 (3.9,3) -- (4.9,3)
 (2.3,3.5) -- (2.45,4.5) ;
 \drawwb{(1.5,3) -- (2.5,3);}
 \drawwr{(2,4) -- (3,4);}
 \drawwo{(4,-1.6) .. controls +(0,1) and +(-0.6,-0.6) .. (6,0) .. controls +(0.3,0.3) and +(-1,0) .. (8.5,2);}
 \drawwo{(3,5) .. controls +(1,0) and +(-1,0.5) .. (4.4,1) .. controls +(0.5,-0.25) and +(-2,0) .. (8.5,0);}
 \draw[orange] (-1,-3.5) -- (9,-3.5);
 \draw[gray!60,very thick,rounded corners=8pt]
 (4.6,-2.4) -- (4.6,-2) (5.4,-2.3) -- (5.4,-2.7)
 (4,2.8) -- (4,3.2) (4.8,2.8) -- (4.8,3.2)
 (2.1,3.6) -- (2.6,3.6) -- (2.6,3.1)
 (2.2,5) -- (2.2,4.4) -- (3,4.4);
 \foreach \x/\y in {2.6/0,4.6/0,1.4/5} {
 \draw[gray!60,very thick]
 (\x,\y) .. controls +(0.2,-0.15) and +(-0.2,-0.15) .. (\x+0.8,\y);
 \draw[gray!60,very thick,dashed]
 (\x,\y) .. controls +(0.2,0.15) and +(-0.2,0.15) .. (\x+0.8,\y);
 }
 \foreach \x in {0,10} {
 \begin{scope} [xshift=\x cm]
 \draw[fill=white] (-1,3) ellipse (1 and 4);
 \draw (-2,3) .. controls +(0.7,-0.4) and +(-0.7,-0.4) .. (0,3);
 \draw[dashed] (-2,3) .. controls +(0.7,0.4) and +(-0.7,0.4) .. (0,3);
 \end{scope}}
 \foreach \x in {0,10} {
 \begin{scope} [xshift=\x cm,yshift=-6cm]
 \draw[fill=white] (-1,3) ellipse (1 and 1.2);
 \draw (-2,3) .. controls +(0.7,-0.4) and +(-0.7,-0.4) .. (0,3);
 \draw[dashed] (-2,3) .. controls +(0.7,0.4) and +(-0.7,0.4) .. (0,3);
 \end{scope}}
\end{tikzpicture}
\caption{Heegaard splitting of $\partial W_{23}$, with the attaching circles of the $2$--handles} \label{figHeegaardSurface}
\end{center}
\end{figure}

\begin{figure}[htb]
\begin{center}
\begin{tikzpicture}
 \draw[gray!60,very thick,rounded corners=8pt] (4,-2) -- (0,-2) -- (0,-1) -- (8,-1) -- (8,-2) -- (4,-2)
 (4,-3) -- (-1,-3) -- (-1,0) -- (2,0) -- (2,3.5) -- (-3,3.5) -- (-3,10.5) -- (4,10.5)
 (3,3) -- (3,0) -- (5,0) -- (5,1.5) -- (4,1.5) -- (4,3.5) -- (3,3.5) -- (3,3)
 (4,-3) -- (9,-3) -- (9,0) -- (6,0) -- (6,1.5) -- (7,1.5) -- (7,3.5) -- (11,3.5) -- (11,10.5) -- (4,10.5)
 (4,7.5) -- (0,7.5) -- (0,6.5) -- (8,6.5) -- (8,7.5) -- (4,7.5)
 (4,9.5) -- (-2,9.5) -- (-2,4.5) -- (1,4.5) -- (1,5.5) -- (-1,5.5) -- (-1,8.5) -- (4,8.5)
 (3.5,4.5) -- (4,4.5) -- (4,5.5) -- (2.5,5.5) -- (2.5,4.5) -- (3.5,4.5)
 (4,9.5) -- (10,9.5) -- (10,4.5) -- (6.5,4.5) -- (6.5,5.5) -- (9,5.5) -- (9,8.5) -- (4,8.5)
 (5,3) -- (5,2.5) -- (6,2.5) -- (6,3.5) -- (5,3.5) -- (5,3);
 \foreach \x/\y/\c in {2/1.6/green,2/1.8/purple,4/2.9/green,4/3.1/purple} {
 \draw[\c]
 (\x,\y) .. controls +(0.2,-0.15) and +(-0.2,-0.15) .. (\x+1,\y);
 \draw[\c,dashed]
 (\x,\y) .. controls +(0.2,0.15) and +(-0.2,0.15) .. (\x+1,\y);
 }
 \foreach \x/\y/\c in {1/4.9/green,1/5.1/purple} {
 \draw[\c]
 (\x,\y) .. controls +(0.2,-0.15) and +(-0.2,-0.15) .. (\x+1.5,\y);
 \draw[\c,dashed]
 (\x,\y) .. controls +(0.2,0.15) and +(-0.2,0.15) .. (\x+1.5,\y);
 }
 \foreach \x/\y/\c in {3.9/-3/green,4.1/-3/blue,4/7.5/green} {
 \draw[\c]
 (\x,\y) .. controls +(0.15,0.2) and +(0.15,-0.2) .. (\x,\y+1);
 \draw[\c,dashed]
 (\x,\y) .. controls +(-0.15,0.2) and +(-0.15,-0.2) .. (\x,\y+1);
 }
 \foreach \x/\y/\c in {5.4/3.5/green,5.6/3.5/purple,1.6/3.5/blue} {
 \draw[\c]
 (\x,\y) .. controls +(0.2,0.5) and +(0.2,-0.5) .. (\x,\y+3);
 \draw[\c,dashed]
 (\x,\y) .. controls +(-0.2,0.5) and +(-0.2,-0.5) .. (\x,\y+3);
 }
 \draw[purple,rounded corners=12pt]
 (4,-0.5) -- (8.5,-0.5) -- (8.5,-2.5) -- (-0.5,-2.5) -- (-0.5,-0.5) -- (4,-0.5)
 (4,10) -- (-2.5,10) -- (-2.5,4) -- (4.5,4) -- (4.5,6) -- (-0.5,6) -- (-0.5,8) -- (8.5,8) -- (8.5,6) -- (6,6) -- (6,4) -- (10.5,4) -- (10.5,10) -- (4,10);
 \draw[blue,rounded corners=12pt]
 (4,9.8) -- (-2.3,9.8) -- (-2.3,4.2) -- (1.3,4.2) -- (1.3,5.8) -- (-0.7,5.8) -- (-0.7,8.2) -- (8.7,8.2) -- (8.7,5.8) -- (6.2,5.8) -- (6.2,4.2) -- (10.3,4.2) -- (10.3,9.8) -- (4,9.8)
 (2.2,5) -- (2.2,4.2) -- (4.3,4.2) -- (4.3,5.8) -- (2.2,5.8) -- (2.2,5)
 (2.7,3) -- (2.7,-0.3) -- (5.3,-0.3) -- (5.3,1.8) -- (4.3,1.8) -- (4.3,3.8) -- (2.7,3.8) -- (2.7,3)
 (4.7,3) -- (4.7,2.2) -- (6.3,2.2) -- (6.3,3.8) -- (4.7,3.8) -- (4.7,3);
 \draw[orange,rounded corners=12pt]
 (6,0.5) -- (5.5,-0.3) -- (8.7,-0.3) -- (8.7,-2.7) -- (4,-2.7) -- (-0.7,-2.7) -- (-0.7,-0.3) -- (2.5,-0.3) -- (2.5,3.8) -- (-2.7,3.8) -- (-2.7,10.2) -- (10.7,10.2) -- (10.7,3.8) -- (6.5,3.8) -- (6.5,2) -- (4.5,2) -- (4.5,4) -- (3,4.5)
 (2,3.2) -- (2.3,3.7) -- (-2.8,3.7) -- (-2.8,10.3) -- (10.8,10.3) -- (10.8,3.7) -- (6.7,3.7) -- (6.7,1.7) -- (5.7,1.7) -- (5,1)
 (1,5.3) -- (2,5.3) -- (2,4) -- (3.7,3.5)
 (5.3,3.5) -- (5.1,4.5) -- (6.5,5) ;
 \draw[orange,dashed,rounded corners=12pt]
 (3,4.5) .. controls +(-0.1,-0.5) and +(0.4,0.3) .. (2,3.2)
 (5,1) .. controls +(0.5,-0.1) and +(-0.3,0.3) .. (6,0.5)
 (3.7,3.5) .. controls +(0.3,0.3) and +(-0.3,0.3) .. (5.3,3.5)
 (6.5,5) .. controls +(-1,1) and +(1,0) .. (3.5,6.2) .. controls +(-0.5,0) and +(1,0.8) .. (1,5.3) ;
\end{tikzpicture}
\caption{Quadrisection diagram of $\RP^5$\\{\footnotesize The missing orange curves can be deduced from the blue cut-system.}} \label{figDiagramRP5}
\end{center}
\end{figure}
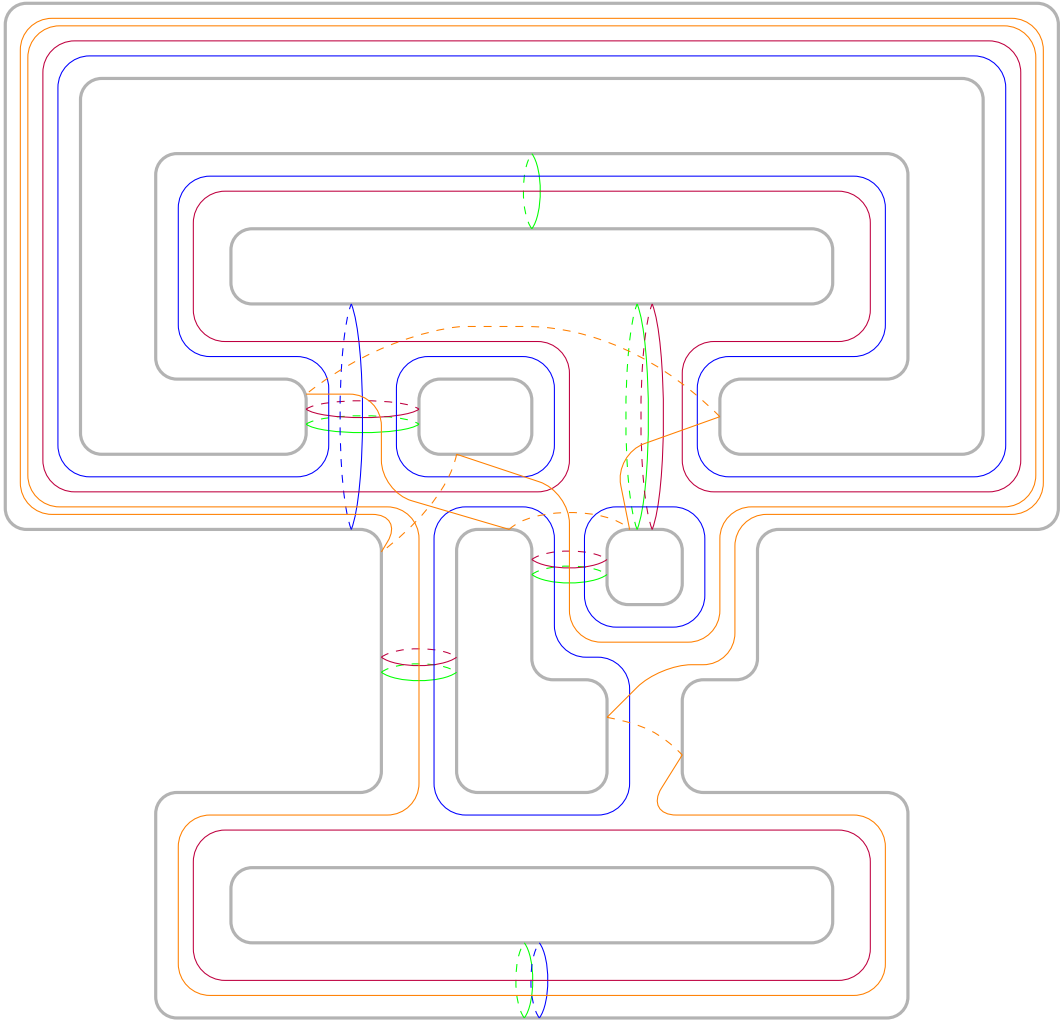

Following the proof of Theorem~\ref{thm:existquadri}, we want to produce a special quadrisection of $X$ relative to $L_2^+$ and $L_3^-$. A diagram of this $4$--dimensional quadrisection is also a quadrisection diagram of $\RP^5$. We will construct such a quadrisection $X=W_{23}\cup W_{24}\cup W_{14} \cup W_{13}$ following the proof of Lemma~\ref{lem:existquadri}, and using the notations of this proof. First, we split the $2$--handle represented by the black circle in the left image of  Figure~\ref{figKirbyX}, which gives the Kirby diagram in the right image of Figure~\ref{figKirbyX}. The latter is the boundary of $W_{23}$ with the attaching link defining $W_{24}$ given by the red circle (associated to $L_2^+$) and the orange one, and the attaching link defining $W_{13}$ made of the blue circle (associated to $L_3^-$) and the purple one. Figure~\ref{figHeegaardSurface} gives a suitable Heegaard splitting of this $\partial W_{23}$. We first draw on the Heegaard surface the diagram representing this splitting $\partial W_{23}=W_{234}\cup W_{123}$, see Figure~\ref{figDiagramRP5} where the cut systems for $W_{234}$ and $W_{123}$ are in green and blue respectively. Then a cut system for $W_{124}$, in purple, is obtained using the fact that the red and orange circles bound disks in $W_{124}$, while the other $1$--handles from $W_{234}$ are preserved. Similarly a cut-system for $W_{134}$, in orange, is deduced from the cut-system for $W_{123}$ and the blue and purple circles. To avoid confusion, Figure~\ref{figDiagramRP5} only shows two orange curves; the other can be obtained by sliding blue curves along themselves in order to get curves disjoint from the first two orange curves.
For the curves which come from $2$--handles on the Kirby diagram, beware that the framing induced by the surface must coincide with the framing indicated in Figure~\ref{figKirbyX}.
\end{example}

\section{Homology from a multisection diagram}
\label{sec:homology}

Throughout this section, we consider homology with $\Z$--coefficients.

We fix an $n$--sected $(n+1)$--manifold $W=\cup_{i=1}^n W_i$ with associated diagram $(\Sigma;\alpha^1,\dots,\alpha^n)$.
For $1\leq i\leq n$, $L_i$ is the subgroup of $H_1(\Sigma)$ generated by the homology classes of the $\alpha^i$--curves.
Note that $L_i$ is the kernel of the inclusion map $H_1(\Sigma)\to H_1(W_I)$ for $I=\{1,\dots,n\}\setminus\{i\}$.

For $1<\ell\leq n$, define a map $\delta_l:\bigoplus_{|I|=\ell}\left(\cap_{i\in I}L_i\right)\to\bigoplus_{|I|=\ell-1}\left(\cap_{i\in I}L_i\right)$ as follows: for $c\in\cap_{i\in I}L_{i}$, the coordinate of $\delta_\ell(c)$ in $\cap_{\substack{i\in I \\i\neq j}}L_{i}$ is $(-1)^{|\{s\in I\mid s<j\}|}c$. Also define a map $\delta_1:\bigoplus_{i=1}^nL_i\to H_1(\Sigma)$ given by the inclusions.

\begin{theorem} \label{th:homology}
 The homology of $W$ is the homology of the complex:
 $$0\to\Z\xrightarrow{0}\bigoplus_{i=1}^n\left(\cap_{j\neq i}L_j\right)\xrightarrow{\delta_{n-1}}\dots\xrightarrow{\delta_{\ell+1}}\bigoplus_{|I|=\ell}\left(\cap_{i\in I}L_i\right)\xrightarrow{\delta_\ell}\dots\xrightarrow{\delta_2}\bigoplus_{i=1}^nL_i\xrightarrow{\delta_1} H_1(\Sigma)\xrightarrow{0}\Z\to 0.$$
 In particular, $H_1(W)\cong H_1(\Sigma)/\oplus_{i=1}^nL_i$ and $H_n(W)\cong\cap_{i=1}^nL_i$.
\end{theorem}
This result will follow from Propositions~\ref{prop:homologySpines} and~\ref{prop:IsomComplexes}.

For $2\leq k\leq n+1$, we denote by $Y_k$ the $k$--spine of the multisection; in particular $Y_{n+1}=W$ and $Y_2=\Sigma$.
First note that $W$ can be constructed from $Y_k$ by adding cells of dimension at least $k$, so that $H_k(W)=H_k(Y_\ell)$ for any $\ell\geq k+2$. The strategy is to compute the homology of $Y_k$ by induction on $k$, using the long exact sequence of the pair $(Y_{k+1},Y_k)$. This provides a complex which gives the homology of $W$, similar to the complex associated to a CW--decomposition.

To compute the homology of the pair $(Y_{k+1},Y_k)$, we need to understand the homology rel boundary of $1$--handlebodies.

\begin{lemma} \label{lem:homologyHandlebody}
 Let $V$ be a $(k+1)$--dimensional $1$--handlebody of genus $g$. The homology of $(V,\partial V)$ is given by
 $$H_\ell(V,\partial V)\cong\left\lbrace\begin{array}{ll} \Z & \text{if } \ell=k+1 \\ \Z^g & \text{if } \ell=k \\ 0 & \text{otherwise} \end{array}\right..$$
\end{lemma}
\begin{proof}
 Since $V$ deformation-retracts on a bouquet of $g$ circles, and $\partial V\cong\sharp^g(S^1\times S^{k-1})$, we have $$H_\ell(V)\cong\left\lbrace\begin{array}{ll} \Z & \text{if } \ell=0 \\ \Z^g & \text{if } \ell=1 \\ 0 & \text{otherwise} \end{array}\right. \qquad\text{and}\qquad H_\ell(\partial V)\cong\left\lbrace\begin{array}{ll} \Z & \text{if } \ell=0,k \\ \Z^g & \text{if } \ell=1,k-1 \\ 0 & \text{otherwise} \end{array}\right..$$
 Conclude using the long exact sequence of the pair $(V,\partial V)$.
\end{proof}

\begin{corollary} \label{cor:homologyPairs}
 The homology group $H_\ell(Y_{k+1},Y_k)$ is free abelian and is non trivial if and only if $\ell=k,k+1$. Moreover, the rank of $H_{k+1}(Y_{k+1},Y_k)$ is $\binom{n}{n-k+1}$.
\end{corollary}
\begin{proof}
 These homology groups split as follows: $H_\ell(Y_{k+1},Y_k)=\oplus_{|I|=n-k+1}H_\ell(W_I,\partial W_I)$.
\end{proof}

\begin{proposition} \label{prop:homologySpines}
 Let $n\geq1$. For $2\leq k\leq n+1$, the homology of the $k$--spine $Y_k$ is the homology of the complex
 \begin{equation} \tag{$*$}
 0\to \Z^{d_k} \xrightarrow{0} H_{k-1}(Y_k,Y_{k-1}) \xrightarrow{\partial_{k-1}}\dots\xrightarrow{\partial_2} H_2(Y_3,Y_2) \xrightarrow{\partial_1} H_1(Y_2)\xrightarrow{0}\Z\to 0
 \end{equation}
 where $d_k=\binom{n-1}{k-2}$. Moreover, for $k\leq n$, the group $H_k(Y_k)$ is generated by the homology classes of the $\partial W_I$ for $|I|=n-k+1$.
\end{proposition}
\begin{proof}
 We prove the result by induction on $k$; it is immediate for $k=2$ (since $Y_2=\Sigma$).
 We use the long exact sequence of the pair $(Y_{k+1},Y_k)$. Thanks to Corollary~\ref{cor:homologyPairs}, it gives $H_\ell(Y_{k+1})=H_\ell(Y_k)$ when $\ell\leq k-2$. Further, by induction, the map  $H_{k+1}(Y_{k+1},Y_k) \to H_k(Y_k)$ is surjective, so that the left part of the long exact sequence splits into the following two sequences.
 \begin{eqnarray}
  & 0\to H_{k+1}(Y_{k+1}) \to H_{k+1}(Y_{k+1},Y_k) \to H_{k}(Y_k) \to0 \\
  & 0\to H_k(Y_{k+1}) \xrightarrow{g_k} H_k(Y_{k+1},Y_k) \xrightarrow{f_k} H_{k-1}(Y_k) \to H_{k-1}(Y_{k+1}) \to0
 \end{eqnarray}
 We shall see, using ($2$) and the analogous sequence for lower values of $k$, that the complex ($*$) gives $H_\ell(Y_{k+1})$ for $\ell=k-1,k$. The following diagram merges part of the complex ($*$) with parts of the sequences ($2$).
 \begin{center}
 \begin{tikzpicture}
 \node (a) at (2,1) {$H_{k-1}(Y_k)$};
 \node (b) at (0,0) {$H_k(Y_{k+1},Y_k)$};
 \node (c) at (4,0) {$H_{k-1}(Y_k,Y_{k-1})$};
 \node (d) at (8,0) {$H_{k-2}(Y_{k-1},Y_{k-2})$};
 \node (e) at (6,-1) {$H_{k-2}(Y_{k-1})$};
 \node (f) at (-2.2,0) {};
 \node (g) at (10.5,0) {};
 \draw[->,>=latex] (b) -- (a) node[pos=0.4,above left,black] {$\scriptstyle{f_k}$};
 \draw[->,>=latex] (c) -- (e) node[pos=0.4,below left,black] {$\scriptstyle{f_{k-1}}$};
 \draw[->,>=latex] (a) -- (c) node[pos=0.6,above right,black] {$\scriptstyle{g_{k-1}}$};
 \draw[->,>=latex] (e) -- (d) node[pos=0.6,below right,black] {$\scriptstyle{g_{k-2}}$};
 \draw[->,>=latex] (f) -- (b) node[pos=0.5,above,black] {$\scriptstyle{0}$};
 \draw[->,>=latex] (b) -- (c) node[pos=0.5,above,black] {$\scriptstyle{\partial_{k-1}}$};
 \draw[->,>=latex] (c) -- (d) node[pos=0.5,above,black] {$\scriptstyle{\partial_{k-2}}$};
 \draw[->,>=latex] (d) -- (g);
 \end{tikzpicture}
 \end{center}
 Note that the $g_\ell$ are injective. From ($2$) and the injectivity of $g_{k-1}$ we get $H_{k-1}(Y_{k+1})\cong H_{k-1}(Y_k)/\mathrm{Im}(f_k)\cong g_{k-1}(H_{k-1}(Y_k))/\mathrm{Im}(\partial_{k-1})$. Now $g_{k-1}(H_{k-1}(Y_k))=\ker(f_{k-1})=\ker(\partial_{k-2})$, so that $H_{k-1}(Y_{k+1})\cong\ker(\partial_{k-2})/\mathrm{Im}(\partial_{k-1})$ as desired. Also $H_k(Y_{k+1})\cong\mathrm{Im}(g_k)=\ker(f_k)=\ker(\partial_{k-1})$.

 By ($1$), the rank of $H_{k+1}(Y_{k+1})$ is $d_{k+1}=\binom{n}{n-k+1}-d_k=\binom{n-1}{k-1}$. For $I\in\{1,\dots,n\}$ such that $|I|=n-k$, $\partial W_I$ has a homology class in $H_{k+1}(Y_{k+1})$. The classes of the $\partial W_I$ for $I\in\{1,\dots,n-1\}$ are independent and there are $\binom{n-1}{n-k}=d_{k+1}$ such classes, so they generate $H_{k+1}(Y_{k+1})$.
\end{proof}

To deduce the theorem from Proposition~\ref{prop:homologySpines}, we need to express the $H_k(Y_{k+1},Y_k)$ in terms of the subgroups $L_i$ of $H_1(\Sigma)$. Note that, for $2\leq k\leq n$, $H_k(Y_{k+1},Y_k)\cong\oplus_{|I|=n-k+1}H_k(W_I,\partial W_I)$. The idea is to identify $H_k(W_I,\partial W_I)$ with $\cap_{i\notin I}L_i$ and to use these identifications to construct an isomorphism of complexes.
Recall $W=Y_{n+1}$ and $\Sigma=Y_2$.

\begin{proposition} \label{prop:IsomComplexes}
 Let $n>1$. There is an isomorphism of complexes
 \begin{center}
 \begin{tikzpicture}
 \node (a) at (0,1.5) {$0$};
 \node (b) at (1,1.5) {$\Z$};
 \node (c) at (3.2,1.5) {$H_n(Y_{n+1},Y_n)$};
 \node (d) at (6,1.5) {$\dots$};
 \node (e) at (8.5,1.5) {$H_2(Y_3,Y_2)$};
 \node (f) at (11,1.5) {$H_1(Y_2)$};
 \node (g) at (13,1.5) {$\Z$};
 \node (h) at (14,1.5) {$0$};
 \draw[->,>=latex] (a) -- (b) node[pos=0.5,above,black] {};
 \draw[->,>=latex] (b) -- (c) node[pos=0.5,above,black] {$\scriptstyle{0}$};
 \draw[->,>=latex] (c) -- (d) node[pos=0.5,above,black] {$\scriptstyle{\partial_{n-1}}$};
 \draw[->,>=latex] (d) -- (e) node[pos=0.5,above,black] {$\scriptstyle{\partial_2}$};
 \draw[->,>=latex] (e) -- (f) node[pos=0.5,above,black] {$\scriptstyle{\partial_1}$};
 \draw[->,>=latex] (f) -- (g) node[pos=0.5,above,black] {$\scriptstyle{0}$};
 \draw[->,>=latex] (g) -- (h) node[pos=0.5,above,black] {};
 \node (a') at (0,0) {$0$};
 \node (b') at (1,0) {$\Z$};
 \node (c') at (3.2,0) {$\bigoplus_{i=1}^n\left(\cap_{j\neq i}L_j\right)$};
 \node (d') at (6,0) {$\dots$};
 \node (e') at (8.5,0) {$\bigoplus_{i=1}^nL_i$};
 \node (f') at (11,0) {$H_1(\Sigma)$};
 \node (g') at (13,0) {$\Z$};
 \node (h') at (14,0) {$0$};
 \draw[->,>=latex] (a') -- (b') node[pos=0.5,above,black] {};
 \draw[->,>=latex] (b') -- (c') node[pos=0.5,above,black] {$\scriptstyle{0}$};
 \draw[->,>=latex] (c') -- (d') node[pos=0.5,above,black] {$\scriptstyle{\delta_{n-1}}$};
 \draw[->,>=latex] (d') -- (e') node[pos=0.5,above,black] {$\scriptstyle{\delta_2}$};
 \draw[->,>=latex] (e') -- (f') node[pos=0.5,above,black] {$\scriptstyle{\delta_1}$};
 \draw[->,>=latex] (f') -- (g') node[pos=0.5,above,black] {$\scriptstyle{0}$};
 \draw[->,>=latex] (g') -- (h') node[pos=0.5,above,black] {};
 \draw[->,>=latex] (b) -- (b') node[pos=0.5,right,black] {$\scriptstyle{\mathrm{Id}}$};
 \draw[->,>=latex] (c) -- (c') node[pos=0.5,right,black] {$\scriptstyle{h_n}$};
 \draw[->,>=latex] (e) -- (e') node[pos=0.5,right,black] {$\scriptstyle{h_2}$};
 \draw[->,>=latex] (f) -- (f') node[pos=0.5,right,black] {$\scriptstyle{h_1}$};
 \draw[->,>=latex] (g) -- (g') node[pos=0.5,right,black] {$\scriptstyle{\mathrm{Id}}$};
 \end{tikzpicture}
 \end{center}
 where $h_1$ is the identity and, for $k>1$, $h_k$ is the direct sum of isomorphisms $H_k(W_I,\partial W_I)\xrightarrow{\scriptstyle{\cong}}\cap_{i\notin I}L_i$.
 Further, there is an isomorphism $h:H_n(W)\to\cap_{i=1}^nL_i$ such that the following diagram commutes, where the top map is given by the long exact sequence of the pair $(W,Y_n)$.
 \begin{center}
 \begin{tikzpicture}
  \node (a) at (0,2) {$H_n(W)$};
  \node (b) at (4,2) {$H_n(Y_{n+1},Y_n)$};
  \node (c) at (0,0) {$\cap_{i=1}^nL_i$};
  \node (d) at (4,0) {$\bigoplus_{j=1}^n\left(\cap_{i\neq j}L_i\right)$};
  \draw[->,>=latex] (a) -- (b);
  \draw[->,>=latex] (a) -- (c) node[pos=0.5,left,black] {$\scriptstyle{h}$};
  \draw[->,>=latex] (b) -- (d) node[pos=0.5,right,black] {$\scriptstyle{h_n}$};
  \draw[->,>=latex] (c) -- (d) node[pos=0.5,above,black] {$\scriptstyle{\delta_n}$};
 \end{tikzpicture}
 \end{center}
\end{proposition}
\begin{proof}
 We prove the result by induction on the dimension $n+1$. Let us define $h_2$. We have $H_2(Y_3,Y_2)=\oplus_{|I|=n-1}H_2(W_I,\Sigma)$. These $W_I$ are $3$--dimensional handlebodies, so the exact sequence of the pair $(W_I,\Sigma)$ gives the short exact sequence $$0\to H_2(W_I,\Sigma)\to H_1(\Sigma)\to H_1(W_I)\to 0.$$ Thus we have a natural identification $H_2(W_I,\Sigma)\cong\ker\big(H_1(\Sigma)\to H_1(W_I)\big)=L_i$, where $I=\{1,\dots,n\}\setminus\{i\}$. Combining these identifications gives an isomorphism $h_2:H_2(Y_3,Y_2)\to \oplus_{i=1}^nL_i$ such that $\delta_1\circ h_2=\partial_1$.

 Now we take $k>2$ and we define $h_k$. We have $H_k(Y_{k+1},Y_k)=\oplus_{|I|=n-k+1}H_k(W_I,\partial W_I)$. Since $W_I$ deformation-retracts on a bouquet of circles, the exact sequence of the pair $(W_I,\partial W_I)$ gives an isomorphism $H_k(W_I,\partial W_I)\cong H_{k-1}(\partial W_I)$. Consider the multisection $\partial W_I=\cup_{i\notin I}(W_I\cap W_i)$. By induction, there is an isomorphism $$H_{k-1}(\partial W_I)\xrightarrow{\scriptstyle{\cong}}\cap_{i\notin I}L_i$$ such that the following diagram commutes.
 \begin{center}
 \begin{tikzpicture}
  \node (a) at (0,2.18) {$H_{k-1}(\partial W_I)$};
  \node (b') at (8,2.18) {\phantom{$\displaystyle H_{k-1}(\partial W_I,Y_{k-1}\cap\partial W_I)=\bigoplus_{|J|=n-k+2,J\supset I}H_{k-1}(W_J,\partial W_J)$}};
  \node (b) at (8,2) {$\displaystyle H_{k-1}(\partial W_I,Y_{k-1}\cap\partial W_I)=\bigoplus_{|J|=n-k+2,J\supset I}H_{k-1}(W_J,\partial W_J)$};
  \node (c) at (0,0) {$\cap_{i\notin I}L_i$};
  \node (d) at (8,0) {$\bigoplus_{j\notin I}\left(\cap_{i\notin I\cup\{j\}}L_i\right)$};
  \draw[->,>=latex] (a) -- (b');
  \draw[->,>=latex] (a) -- (c) node[pos=0.5,left,black] {$\scriptstyle{\cong}$};
  \draw[->,>=latex] (b) -- (d) node[pos=0.5,right,black] {$\scriptstyle{h_{k-1}}$};
  \draw[->,>=latex] (c) -- (d) node[pos=0.5,above,black] {$\scriptstyle{\delta_{k-1}}$};
 \end{tikzpicture}
 \end{center}
 Combine these isomorphisms to define $h_k$.

 A direct computation shows that $\delta_n:\cap_{i=1}^nL_i\to\ker(\delta_{n-1})$ is an isomorphism. This provides the isomorphism $h$ since $h_n$ induces an isomorphism $\ker(\delta_{n-1})\cong\ker(\partial_{n-1})$.
\end{proof}

\begin{remark}
The isomorphisms $h_k$ from Proposition~\ref{prop:IsomComplexes} are in fact unique provided $h_1=\id$, $h_2$ is the natural identification $$L_i = H_2(W_{\{1,\dots,n\}\setminus\{i\}},\del W_{\{1,\dots,n\}\setminus\{i\}})\simeq \ker(H_1(\Sigma)\to H_1(W_{\{1,\dots,n\}\setminus\{i\}})$$ and all $h_k$ respect the natural decompositions into direct summands indexed by $I$ with $|I|=n-k+1$. Indeed $\delta_{k-1}$ is injective on each summand.
\end{remark}

\begin{remark}
Observe that the signs $(-1)^{|\{s\in I\mid s<j\}|}$ appearing in the definition of the differentials $\delta_l$
coincide with the signs appearing in the orientation convention of a multisected manifold in Remark~\ref{rem:orient}.
\end{remark}

To illustrate Theorem~\ref{th:homology}, we recover the homology of some $5$--manifolds from the diagrams of the previous section.

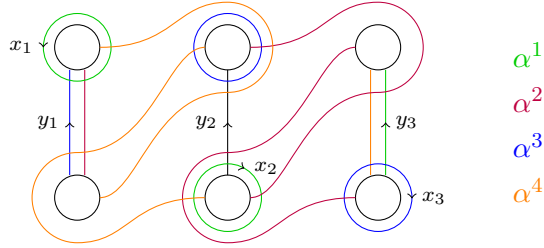
\begin{figure}[htb]
\begin{center}
\begin{tikzpicture}
 \foreach \x in {-2,0,2} \foreach \y in {-1,1}
 \draw (\x,\y) circle (0.3);
 \foreach \x/\c in {-2.1/blue,-1.9/purple,1.9/orange,2.1/vert,0/black}
 \draw[\c] (\x,-0.7) -- (\x,0.7);
 \foreach \x/\y/\c in {-2/1/vert,0/-1/vert,0/1/blue,2/-1/blue}
 \draw[\c] (\x,\y) circle (0.45);
 \foreach \t/\c in {0/purple,180/orange} {
 \draw[\c,rotate=\t] (0.3,1) .. controls +(1,0) and +(-0.7,0) .. (2,1.6) arc (90:-90:0.6) .. controls +(-1,0) and +(0.5,0) .. (0.3,-1);
 \draw[\c,rotate=\t] (1.7,-1) .. controls +(-1,0) and +(0.7,0) .. (0,-1.6) arc (270:90:0.6) .. controls +(1,0) and +(-0.5,0) .. (1.7,1);}
 \foreach \x/\p/\i in {-2.1/left/1,0/left/2,2.1/right/3}
 \draw[->] (\x,-0.01) -- (\x,0) node[\p] {$\scriptstyle{y_\i}$};
 \draw[->] (-2.45,1.01) -- (-2.45,1) node[left] {$\scriptstyle{x_1}$};
 \draw[->] (0.2,-0.6) -- (0.22,-0.61) node[right] {$\scriptstyle{x_2}$};
 \draw[->] (2.45,-1) -- (2.45,-1.01) node[right] {$\scriptstyle{x_3}$};
 \foreach \i/\c/\y in {1/vert/0.9,2/purple/0.3,3/blue/-0.3,4/orange/-0.9}
 \draw[\c] (4,\y) node {$\alpha^\i$};
\end{tikzpicture}
\caption{Quadrisection diagram of $S^2\times S^3$}
\label{fig:S2xS3ter}
\end{center}
\end{figure}

\stepcounter{theorem}

\subsection*{Example {\thetheorem}}
We start with $S^2\times S^3$, whose quadrisection diagram is reproduced in Figure~\ref{fig:S2xS3ter}, where some notations have been added for the computation. The homology classes of the curves $x_i$ and $y_i$ generate $H_1(\Sigma)$, and, denoting in the same way the curves and their homology classes, we have the following subgroups.
$$L_1=\langle x_1,x_2,y_3\rangle \quad
L_2=\langle y_1,x_3+y_2,x_2+y_3\rangle \quad
L_3=\langle y_1,x_2,x_3\rangle \quad
L_4=\langle x_1+y_2,x_2+y_1,y_3\rangle$$
$$L_{12}=\langle x_2+y_3 \rangle \quad
L_{13}=\langle x_2 \rangle \quad
L_{14}=\langle y_3 \rangle \quad
L_{23}=\langle y_1 \rangle \quad
L_{24}=\langle x_2+y_1+y_3 \rangle \quad
L_{34}=\langle x_2+y_1 \rangle$$
$$L_{123}=L_{124}=L_{134}=L_{234}=L_{1234}=0$$\newcommand{\congv}{
\begin{tikzpicture}
 \draw (0,0) node[rotate=-90] {$\cong$};
\end{tikzpicture}}
This gives the following complex:
\[ \xymatrix{
    0\ \ar[r] & \ \Z\ \ar[r] & \ 0\  \ar[r] & \ \oplus_{i<j}L_{ij}\ \ar[r]^{\partial_2} \ar@[white][d]^{\congv} & \ \oplus_i L_i\ \ar[r]^{\partial_1} \ar@[white][d]^{\congv} & \ H_1(\Sigma)\ \ar[r]^{\quad 0} \ar@[white][d]^{\congv} & \Z \ar[r] & \ 0 \\
    &&&\qquad  \Z^6 &\qquad  \Z^{12} &\qquad  \Z^6 &&
  }.\]
The map $\partial_1$ is surjective so $H_1=0$. We can check that $H_2\simeq \Z$ and is generated by the class of $(x_1,y_2+x_3,-x_3,-y_2-x_1)\in L_1\oplus L_2\oplus L_3\oplus L_4$. Finally $H_3\simeq \Z$ is the kernel of $\partial_2$ and is generated by $(x_2+y_3,-x_2,-y_3,-y_1,x_2+y_1+y_3,-x_2-y_1)\in L_{12}\oplus L_{13}\oplus L_{14}\oplus L_{23}\oplus L_{24}\oplus L_{34}$.
So we indeed recover the homology of $S^2\times S^3$.

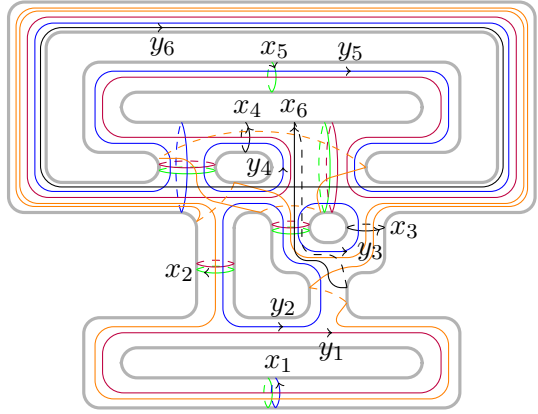
\begin{figure}[htb]
\begin{center}
\begin{tikzpicture} [xscale=0.7,yscale=0.5]
 \draw[gray!60,very thick,rounded corners=6pt] (4,-2) -- (0,-2) -- (0,-1) -- (8,-1) -- (8,-2) -- (4,-2)
 (4,-3) -- (-1,-3) -- (-1,0) -- (2,0) -- (2,3.5) -- (-3,3.5) -- (-3,10.5) -- (4,10.5)
 (3,3) -- (3,0) -- (5,0) -- (5,1.5) -- (4,1.5) -- (4,3.5) -- (3,3.5) -- (3,3)
 (4,-3) -- (9,-3) -- (9,0) -- (6,0) -- (6,1.5) -- (7,1.5) -- (7,3.5) -- (11,3.5) -- (11,10.5) -- (4,10.5)
 (4,7.5) -- (0,7.5) -- (0,6.5) -- (8,6.5) -- (8,7.5) -- (4,7.5)
 (4,9.5) -- (-2,9.5) -- (-2,4.5) -- (1,4.5) -- (1,5.5) -- (-1,5.5) -- (-1,8.5) -- (4,8.5)
 (3.5,4.5) -- (4,4.5) -- (4,5.5) -- (2.5,5.5) -- (2.5,4.5) -- (3.5,4.5)
 (4,9.5) -- (10,9.5) -- (10,4.5) -- (6.5,4.5) -- (6.5,5.5) -- (9,5.5) -- (9,8.5) -- (4,8.5)
 (5,3) -- (5,2.5) -- (6,2.5) -- (6,3.5) -- (5,3.5) -- (5,3);
 \foreach \x/\y/\c in {2/1.6/green,2/1.8/purple,4/2.9/green,4/3.1/purple,6/3/black} {
 \draw[\c]
 (\x,\y) .. controls +(0.2,-0.15) and +(-0.2,-0.15) .. (\x+1,\y);
 \draw[\c,dashed]
 (\x,\y) .. controls +(0.2,0.15) and +(-0.2,0.15) .. (\x+1,\y);
 }
 \foreach \x/\y/\c in {1/4.9/green,1/5.1/purple} {
 \draw[\c]
 (\x,\y) .. controls +(0.2,-0.15) and +(-0.2,-0.15) .. (\x+1.5,\y);
 \draw[\c,dashed]
 (\x,\y) .. controls +(0.2,0.15) and +(-0.2,0.15) .. (\x+1.5,\y);
 }
 \foreach \x/\y/\c in {3.9/-3/green,4.1/-3/blue,4/7.5/green,3.3/5.5/black} {
 \draw[\c]
 (\x,\y) .. controls +(0.15,0.2) and +(0.15,-0.2) .. (\x,\y+1);
 \draw[\c,dashed]
 (\x,\y) .. controls +(-0.15,0.2) and +(-0.15,-0.2) .. (\x,\y+1);
 }
 \foreach \x/\y/\c in {5.4/3.5/green,5.6/3.5/purple,1.6/3.5/blue} {
 \draw[\c]
 (\x,\y) .. controls +(0.2,0.5) and +(0.2,-0.5) .. (\x,\y+3);
 \draw[\c,dashed]
 (\x,\y) .. controls +(-0.2,0.5) and +(-0.2,-0.5) .. (\x,\y+3);
 }
 \draw[purple,rounded corners=8pt]
 (4,-0.5) -- (8.5,-0.5) -- (8.5,-2.5) -- (-0.5,-2.5) -- (-0.5,-0.5) -- (4,-0.5)
 (4,10) -- (-2.5,10) -- (-2.5,4) -- (4.5,4) -- (4.5,6) -- (-0.5,6) -- (-0.5,8) -- (8.5,8) -- (8.5,6) -- (6,6) -- (6,4) -- (10.5,4) -- (10.5,10) -- (4,10);
 \draw[blue,rounded corners=7pt]
 (4,9.8) -- (-2.3,9.8) -- (-2.3,4.2) -- (1.3,4.2) -- (1.3,5.8) -- (-0.7,5.8) -- (-0.7,8.2) -- (8.7,8.2) -- (8.7,5.8) -- (6.2,5.8) -- (6.2,4.2) -- (10.3,4.2) -- (10.3,9.8) -- (4,9.8)
 (2.2,5) -- (2.2,4.2) -- (4.3,4.2) -- (4.3,5.8) -- (2.2,5.8) -- (2.2,5)
 (2.7,3) -- (2.7,-0.3) -- (5.3,-0.3) -- (5.3,1.8) -- (4.3,1.8) -- (4.3,3.8) -- (2.7,3.8) -- (2.7,3)
 (4.7,3) -- (4.7,2.2) -- (6.3,2.2) -- (6.3,3.8) -- (4.7,3.8) -- (4.7,3);
 \draw[orange,rounded corners=7pt]
 (6,0.5) -- (5.5,-0.3) -- (8.7,-0.3) -- (8.7,-2.7) -- (4,-2.7) -- (-0.7,-2.7) -- (-0.7,-0.3) -- (2.5,-0.3) -- (2.5,3.8) -- (-2.7,3.8) -- (-2.7,10.2) -- (10.7,10.2) -- (10.7,3.8) -- (6.5,3.8) -- (6.5,2) -- (4.5,2) -- (4.5,4) -- (3,4.5)
 (2,3.2) -- (2.3,3.7) -- (-2.8,3.7) -- (-2.8,10.3) -- (10.8,10.3) -- (10.8,3.7) -- (6.7,3.7) -- (6.7,1.7) -- (5.7,1.7) -- (5,1)
 (1,5.3) -- (2,5.3) -- (2,4) -- (3.7,3.5)
 (5.3,3.5) -- (5.1,4.5) -- (6.5,5) ;
 \draw[orange,dashed]
 (3,4.5) .. controls +(-0.1,-0.5) and +(0.4,0.3) .. (2,3.2)
 (5,1) .. controls +(0.5,-0.1) and +(-0.3,0.3) .. (6,0.5)
 (3.7,3.5) .. controls +(0.3,0.3) and +(-0.3,0.3) .. (5.3,3.5)
 (6.5,5) .. controls +(-1,1) and +(1,0) .. (3.5,6.2) .. controls +(-0.5,0) and +(1,0.8) .. (1,5.3) ;
 \draw[rounded corners=5pt] (6,1) -- (5.5,1) -- (5.5,1.9) -- (4.6,1.9) -- (4.6,6.5);
 \draw[dashed,rounded corners=5pt] (6,1) -- (5.8,2.1) -- (4.8,2.1) -- (4.8,6) -- (4.6,6.5);
 \draw[rounded corners=8pt]
 (4,9.65) -- (-2.15,9.65) -- (-2.15,4.35) -- (10.15,4.35) -- (10.15,9.65) -- (4,9.65);
 \draw[->] (4.19,-2.2) -- (4.18,-2.16) node[above] {$x_1$};
 \draw[->] (2.21,1.5) -- (2.2,1.5) node[left] {$x_2$};
 \draw[->] (6.85,2.92) -- (6.87,2.93) node[right] {$x_3$};
 \draw[->] (3.37,6.3) -- (3.36,6.33) node[above] {$x_4$};
 \draw[->] (4.05,8.4) -- (4.08,8.3) node[above] {$x_5$};
 \draw[->] (4.6,6.2) -- (4.6,6.35) node[above] {$x_6$};
 \draw[->] (5.5,-0.5) -- (5.6,-0.5) node[below] {$y_1$};
 \draw[->] (4.2,-0.3) -- (4.3,-0.3) node[above] {$y_2$};
 \draw[->] (5.9,2.2) -- (6,2.2) node[right] {$y_3$};
 \draw[->] (4.3,4.9) -- (4.3,5) node[left] {$y_4$};
 \draw[->] (6,8.2) -- (6.1,8.2) node[above] {$y_5$};
 \draw[->] (1,9.65) -- (1.1,9.65) node[below] {$y_6$};
\end{tikzpicture}
\caption{Quadrisection diagram of $\RP^5$} \label{figHomologyRP5}
\end{center}
\end{figure}

\stepcounter{theorem}

\subsection*{Example {\thetheorem}} We now consider $W=\RP^5$, with the diagram in Figure~\ref{figHomologyRP5} reproducing that of Figure~\ref{figDiagramRP5}, with a basis of $H_1(\Sigma)$ represented. With the same color code as in Figure~\ref{fig:S2xS3ter}, we have the following subgroups of $H_1(\Sigma)$.
$$L_1=\langle x_1,x_2,x_3,x_4,x_5,x_6 \rangle \qquad L_2=\langle x_2,x_3,x_4-x_5,x_6,y_1,y_4+y_5 \rangle \qquad L_3=\langle x_1,x_6,y_2,y_3,y_4,y_5 \rangle$$
Although we have not drawn a complete cut-system for $\alpha^4$, we can compute $L_4$. We know that it is a $\Z^6$--summand of $H_1(\Sigma)$. The two generators given by the orange curves on the diagram are $h_1=-x_2+x_3+y_4$ and $h_2=x_2+x_4-x_6-y_1+y_2+2y_3-2y_6$. Further, the subgroup $L_4\cap L_3$ is orthogonal to $h_1$ and $h_2$ with respect to the intersection form on $\Sigma$, and isomorphic to $\Z^4$ (since $W_{123}$ is obtained from the genus--$6$ handlebody $W_{124}$ by exactly two surgeries). Hence it remains to compute the orthogonal complement of $\langle h_1,h_2 \rangle$ in~$L_3$. We finally get:
$$L_4=\langle -x_2+x_3+y_4,x_2+x_4-x_6-y_1+y_2+2y_3-2y_6,x_1-y_4,x_6-2y_4,y_2+y_3-y_4,y_5 \rangle.$$
We obtain the following subgroups of $H_1(\Sigma)$.
$$L_{12}=\langle x_2,x_3,x_4-x_5,x_6 \rangle \quad
L_{13}=\langle x_1,x_6 \rangle \quad
L_{14}=\langle x_1-x_2+x_3,2x_1-x_6 \rangle \quad L_{23}=\langle x_6,y_4+y_5 \rangle$$
$$L_{24}=\langle -x_2+x_3+y_4+y_5,x_6-2y_4-2y_5 \rangle \quad
L_{34}=\langle x_1-y_4,x_6-2y_4,y_2+y_3-y_4,y_5 \rangle$$
$$L_{123}=\langle x_6 \rangle \quad
L_{124}=\langle -2x_2+2x_3+x_6 \rangle \quad
L_{134}=\langle 2x_1-x_6 \rangle \quad
L_{234}=\langle x_6-2y_4-2y_5 \rangle$$
It follows that the homology of $W$ is the homology of the following complex:
$$ \xymatrix{
    0\ \ar[r] & \ \Z\ \ar[r]^{0\qquad} & \ \oplus_{i<j<k}L_{ijk}\  \ar[r]^{\partial_3} \ar@[white][d]^{\congv} & \ \oplus_{i<j}L_{ij}\ \ar[r]^{\partial_2} \ar@[white][d]^{\congv} & \ \oplus_i L_i\ \ar[r]^{\partial_1} \ar@[white][d]^{\congv} & \ H_1(\Sigma)\ \ar[r]^{\quad 0} \ar@[white][d]^{\congv} & \Z \ar[r] & \ 0 \\
    && \qquad \Z^4 & \qquad \Z^{16} & \qquad \Z^{24} & \qquad \Z^{12} &&}.$$
We readily see that $L_{1234}=0$, so that $H_4(W)=0$, and $H_1(W)\cong H_1(\Sigma)/\mathrm{Im}(\partial_1)\cong\Z/2\Z$ (generated by the class of $y_6$).
Computing the image of $\partial_3$ and $\partial_2$ and the kernel of $\partial_2$ and $\partial_1$ shows that $H_2(W)=0$ and $H_3(W)\cong\Z/2\Z$ (generated by the class of $(x_3-x_2,x_1,-x_1-x_3,y_4+y_5, y_4+y_5-x_2-x_3,x_1-y_4-y_5)$), as expected.

\def\cprime{$'$}
\providecommand{\bysame}{\leavevmode ---\ }
\providecommand{\og}{``}
\providecommand{\fg}{''}
\providecommand{\smfandname}{\&}
\providecommand{\smfedsname}{\'eds.}
\providecommand{\smfedname}{\'ed.}
\providecommand{\smfmastersthesisname}{M\'emoire}
\providecommand{\smfphdthesisname}{Th\`ese}

\end{document}